\definecolor{darkgreen}{rgb}{0.06, 0.56, 0.2}
\newcommand{\blue}[1]{{\textcolor{blue}{#1}}}
\newcommand{\E}{\mathbb{E}}
\newcommand{\F}{\mathbb{F}}
\newcommand{\R}{\mathbb{R}}
\newcommand{\Prob}{\mathbb{P}}
\newcommand{\Z}{\mathbb{Z}}
\newcommand{\bS}{\mathbb{S}}
\def\cala{{\mathcal A}}
\def\calc{{\mathcal C}}
\def\cale{{\mathcal E}}
\def\calf{{\mathcal F}}
\def\calh{{\mathcal H}}
\def\calj{{\mathcal J}}
\def\calk{{\mathcal K}}
\def\call{{\mathcal L}}
\def\cals{{\mathcal S}}
\DeclareMathOperator{\diag}{diag}
\DeclareMathOperator{\Var}{Var}
\newcommand{\pend}{\hfill \thicklines \framebox(6.6,6.6)[l]{}}
\newenvironment{proof*}[1]{\noindent {\sc  #1} \rm}{\pend}
\DeclarePairedDelimiter{\dotp}{\langle}{\rangle}
\newtheorem{theorem}{Theorem}[section]
\newtheorem{lemma}{Lemma}[section]
\newtheorem{assumption}{Assumption}[section]
\newtheorem{proposition}{Proposition}[section]
\newtheorem{remark}{Remark}[section]
\newcommand{\setsection}[2] {
	\setcounter{section}{#1}
	\setcounter{subsection}{0}
	\setcounter{equation}{0}
	\setcounter{conjecture}{0}
	\setcounter{assumption}{0}
	\setcounter{question}{0}
	\setcounter{definition}{0}
	\setcounter{theorem}{0}
	\setcounter{corollary}{0}
	\setcounter{lemma}{0}
	\setcounter{proposition}{0}
	\setcounter{remark}{0}
	\setcounter{appen}{0}
	\setsection*{\large \bf \thesection. #2}}
\begin{document}
	%\pdfrender{StrokeColor=black,TextRenderingMode=2,LineWidth=0.2pt}
	\title{\bf \Large  Asymptotic Product-form Steady-state for Multiclass Queueing Networks with SBP Service Policies in Multi-scale Heavy Traffic}
	
	%%%%%%%%%%%%%%
	%   AUTHORS  %
	%%%%%%%%%%%%%%
	\author{J.G. Dai\\Cornell University\\ \and Dongyan (Lucy) Huo\\Cornell University\\}
	\date{\today \medskip\\ }
	\normalsize 
	
	%\date{}
	
	\maketitle
	
	\begin{abstract}
		In this work, we study the stationary distribution of the scaled queue length vector process in multiclass queueing networks operating under static buffer priority service policies. We establish that when subjected to a multi-scale heavy traffic condition, the stationary distribution converges to a product-form limit, with each component in the product form following an exponential distribution. 
		A major assumption in proving the desired product-form limit is the uniform moment bound for scaled queue lengths.
		We prove this assumption holds if
		the unscaled high-priority queue lengths have uniform moment bound and
		a certain reflection matrix is a $\mathcal{P}$-matrix.

	\end{abstract}
	
	\begin{quotation}
		\noindent {\bf Keywords}: multiclass queueing networks, product-form stationary distribution, heavy traffic approximation, performance analysis
		
		\medskip
		
		\noindent {\bf Mathematics Subject Classification}: 60K25, 60J27, 60K37
	\end{quotation}

	\section{Introduction}
	\label{sec:intro}
	
	This paper studies multiclass queueing networks (MCNs) with general inter-arrival and service time distributions in a multi-scale heavy traffic. We prove that, in this multi-scale heavy-traffic regime, the steady state of the scaled queue length process converges to a unique product-form limit, where each component is independently distributed and follows an exponential distribution. We provide the first closed-form formula for general MCNs with general inter-arrival and service time distributions, enabling quick performance approximations without the need for complex and time-consuming simulations.
	
	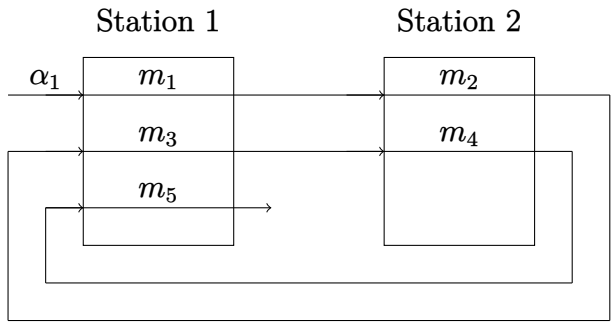
\begin{figure}[htbp]
		\centering
		\begin{tikzpicture}%[scale=1]
			\draw (0, -0.5) rectangle (2, 2);
			\draw (4, -0.5) rectangle (6, 2);
			\draw[-] (-1, 1.5) -- (7,1.5);
			\draw[-] (-1, 0.75) -- (6.5,0.75);
			\draw[-] (7, 1.5) -- (7,-1.5);
			\draw[-] (6.5, 0.75) -- (6.5,-1);
			\draw[-] (6.5, -1) -- (-0.5,-1);
			\draw[-] (7, -1.5) -- (-1,-1.5);
			\draw[-] (-1, -1.5) -- (-1,0.75);
			\draw[-] (-0.5, -1) -- (-0.5,0);
			\draw[->] (-0.5, 0) -- (2.5,0);
			\draw[->] (-0.5, 0) -- (0,0);
			\draw[->] (-0.5, 1.5) -- (0,1.5);
			\draw[->] (-0.5, 0.75) -- (0,0.75);
			
			\draw[->] (3.5, 1.5) -- (4,1.5);
			\draw[->] (3.5, 0.75) -- (4,0.75);
			
			\draw (-0.5, 1.7) node {$\alpha_1$};
			\draw (1, 1.7) node {$m_1$};
			\draw (1, 0.95) node {$m_3$};
			\draw (1, 0.2) node {$m_5$};
			
			\draw (5, 1.7) node {$m_2$};
			\draw (5, 0.95) node {$m_4$};
			
			\draw (1, 2.5) node {Station 1};
			\draw (5, 2.5) node {Station 2};
			
		\end{tikzpicture}
		\caption{Two-Station Five-Class Re-entrant Line}
		\label{fig:2s5c-fig}
	\end{figure}
	
	To illustrate the general results, we consider a simple two-station five-class re-entrant line as shown in Figure~\ref{fig:2s5c-fig}. 
	Jobs arrive externally following a renewal process, with the inter-arrival times $\{T_{e,1}(i)/\alpha_1, i\geq1\}$ assumed to be independently identically distributed (i.i.d.) with $\E[T_{e,1}(1)]=1$ and $\Var[T_{e,1}(1)]=c_{e,1}^2$.
	Each job needs to go through five different steps in the system before exiting, following the flow depicted in the figure. When a job finishes its processing in the current step, it will move to the next buffer to wait for the start of its next step. We assume that each buffer has infinite capacity.
	Following the framework in~\cite{Harr1988}, we adopt the notion of job classes, and we denote a job to be of class $k$ if it is currently being processed in step $k$ or waiting in buffer $k$ for its turn.
	In the remainder of this paper, we will use the terms ``class'' and ``buffer'' interchangeably.
	The processing times for each class are assumed to be i.i.d.\ with mean $m_k$, and we denote the sequence of processing times of class $k$ jobs as $\{m_kT_{s,k}(i),i\geq1\}$ with $\E[T_{s,k}(1)]=1$ and $\Var[T_{s,k}(1)]=c_{s,k}^2$ for $k=1,\ldots,5$. 
	In this work, we assume that the stations are unitary, meaning that only one job can be processed at a time.
	Upon a job completion, the server must decide on the next job to work on, and this decision gives rise to a service policy. 
	We focus on the static buffer priority (SBP) policies, in which classes at the same station are assigned a strict priority ranking and higher-priority classes are served first. 
	As an example, we study the following SBP service discipline, $\{(5,3,1),(2,4)\}.$
	For jobs at station 1, we assign the highest priority to class $5$, the next priority to class $3$, and the lowest priority to class $1$. For jobs at station 2, we have class $2$ jobs of the highest priority and class $4$ jobs of the lowest. 
	
	Let $\rho_1$ and $\rho_2$ denote the respective traffic intensity at Station 1 and 2, computed as 
	\begin{equation*}
		\rho_1=\alpha_1(m_1+m_3+m_5)<1,\quad\text{and}\quad
		\rho_2=\alpha_1(m_2+m_4)<1.
	\end{equation*}
	Under the specified SBP policy, it is important to place an additional assumption that $\rho_v=\alpha_1(m_2+m_5)<1$, as discussed in \citet{DaiVand2000}, to ensure that the network is stable. With this, the Markov process associated with the system is positive Harris recurrent, under some minor assumptions on inter-arrival and service time distributions. 
	
	Let $Z_k$ denote the steady-state queue length of Class $k$. To understand $Z_k$, we study its heavy traffic limit by considering a sequence of networks in which the traffic intensities at both stations converge to 1. Specifically, we study the following ``multi-scale'' heavy traffic regime: for a sequence of networks indexed by $r$ as $r\downarrow0$, we have 
	\begin{equation*}
		1-\rho_1^{(r)}=rb_1,\quad 1-\rho_2^{(r)}=r^2b_2,\quad\text{and}\quad\lim_{r\to0}\rho_v^{(r)}<1,
	\end{equation*}
	with $b_1,b_2>0$. 
	This separation in the rate at which the traffic intensities of each station converge to 1 is the rationale for terming it the ``multi-scale'' heavy traffic, which reflects the unbalanced workload distribution commonly observed in real-world systems.

	We now are ready to state the result.
	\begin{proposition}
		\label{prop:2s5c-result}
		Assume that there exists a small $\delta_0>0$ such that $\E[T_{e,1}^{3+\delta_0}]<\infty$, and $\E[T_{s,k}^{3+\delta_0}]<\infty$, $k\in\{1,\ldots,5\}.$
		Under the multi-scale heavy-traffic regime, there exists a random vector $(Z_1^\ast, Z_4^\ast)\in\R_+^2$ such that
		\begin{equation*}
			\left((1-\rho_1^{(r)})\begin{pmatrix}
				Z_1^{(r)}\\Z_3^{(r)}\\Z_5^{(r)}
			\end{pmatrix}, (1-\rho_2^{(r)})\begin{pmatrix}
				Z_2^{(r)}\\Z_4^{(r)}
			\end{pmatrix}\right)\Rightarrow \left(\begin{pmatrix}
				Z_1^*\\0\\0
			\end{pmatrix}, \begin{pmatrix}
				0\\Z_4^*
			\end{pmatrix}\right),\quad\text{as }r\to0,
		\end{equation*}
		where ``$\Rightarrow$'' denotes convergence in distribution.
		Moreover, $Z_1^\ast$ and $Z_4^\ast$ are independent and each follows an exponential distribution.
	\end{proposition}
	
	We first note that the high-priority classes vanish in the limit (a form of state space collapse), leaving only the lowest-priority classes at each station. More importantly, these remaining classes exhibit a product-form structure, with each marginal distribution independently following an exponential distribution. This simple closed-form limit enables straightforward computation and approximation of system performance.
	
	To illustrate our result and explore its practical applications, we conduct a set of numerical experiments with $\rho_1=96\%$ and $\rho_2=99\%$, with mean and variance parameters for inter-arrival and service time distributions specified in Section~\ref{sec:experiments}.
	In particular, we compare the exponential distribution to the empirical distribution obtained from the simulation. 
	The histogram in Figure~\ref{fig:z1-intro} demonstrates a close distributional match, especially in the tail, which is valuable for assessing tail latency -- a key performance metric. Additionally, the closed-form formula also enables performance approximation across various policies, allowing us to identify the best-performing SBP policy and optimize system performance, with little computational effort compared to lengthy simulation runs, especially in heavy traffic. For instance, focusing on the average time-in-system, or cycletime, our result suggests that $\{(5,3,1),(4,2)\}$ is the best performing SBP policy, a result subsequently confirmed by simulation. Details on this numerical example and further results are discussed in Section~\ref{sec:experiments}.
	
	\begin{figure}[htbp]
		\centering
		\begin{minipage}[b]{0.33\textwidth} 
			\centering
			\includegraphics[width=\textwidth]{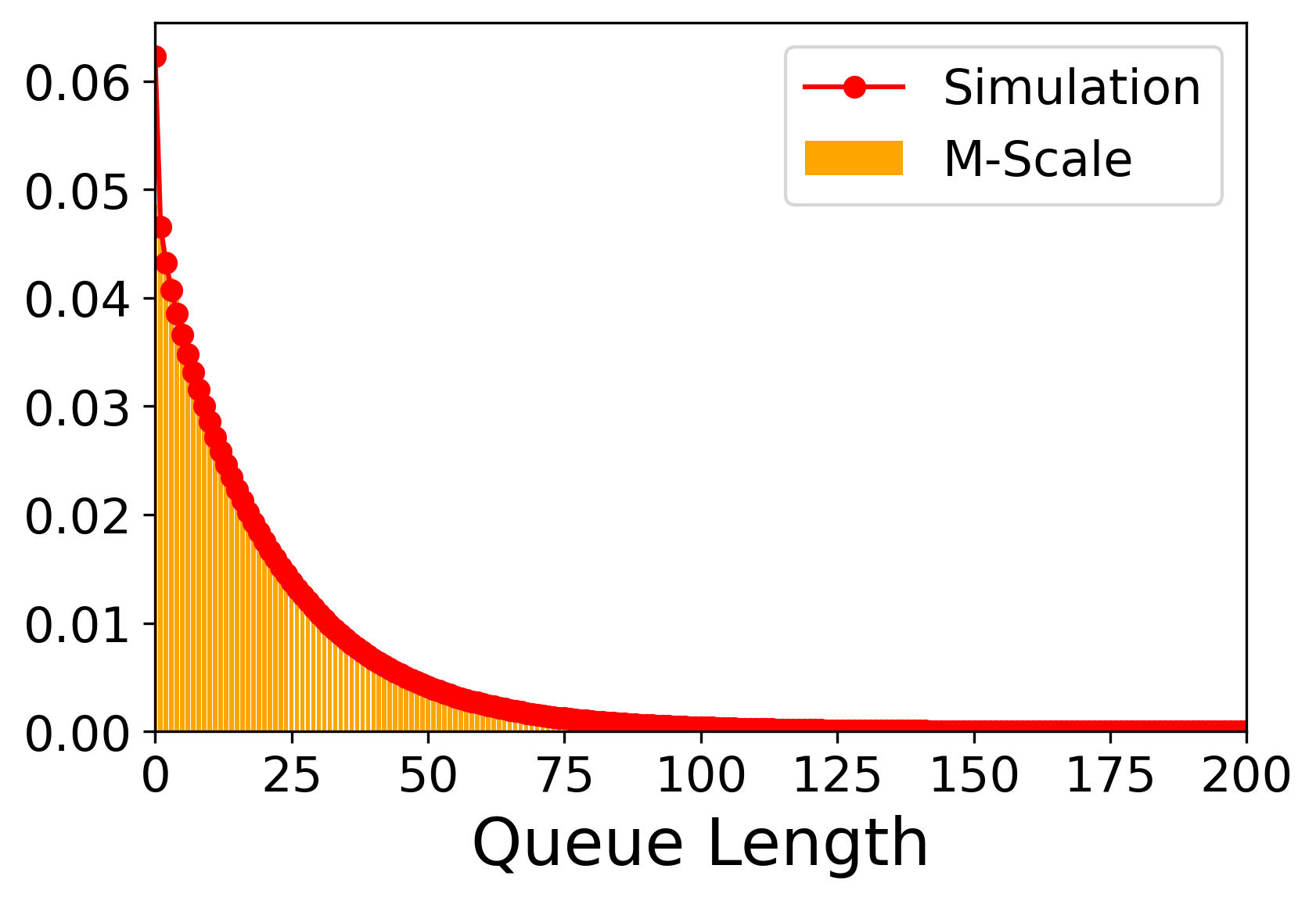} 
			\caption{Histogram of $Z_1$}
			\label{fig:z1-intro}
		\end{minipage}%
		\hspace{1em} 
		\begin{minipage}[b]{0.63\textwidth} 
			\centering
			\begin{tabular}{ccc}
				\hline  
				Priority    &  M-Scale &   Simulation\\\hline 
				$\{(5,3,1),(4,2)\}$    &    \blue{ 134.862 }&        $137.759\pm0.001$\\\hline  
				$\{(3,1,5),(4,2)\}$&     157.891&        $157.102\pm0.001$   \\\hline     
				$\{(5,1,3),(4,2)\}$   &     180.920 &        $194.142\pm0.001$  \\\hline  
				$\{(5,3,1),(2,4)\}$ &     187.828 &        $185.250\pm0.001$ \\\hline  
				$\{(3,1,5),(2,4)\}$  &     217.947 &        $216.077\pm0.001$\\\hline  
				$\{(5,1,3),(2,4)\}$  &     217.947 &        $226.220\pm0.001$\\\hline  
			\end{tabular}
			\captionof{table}{Cycletime Approximation} 
		\end{minipage}
	\end{figure}
	
	The general multiclass network setup is described in Section~\ref{sec:multiclass}, with a general theorem (Theorem~\ref{thm:main}) stated in Section~\ref{sec:main} and proved in Section~\ref{sec:proof}. 
	Even though Proposition~\ref{prop:2s5c-result} is a special case of Theorem~\ref{thm:main}, the full proof of Proposition~\ref{prop:2s5c-result} can be found in the companion paper~\citet{DaiHuo2024a}. We recommend reading the companion paper prior to engaging with the current paper, as the companion paper uses the 2-station 5-class network in Figure~\ref{fig:2s5c-fig} to illustrate our proof technique without the extensive notation required for tracking a multiclass network. 
	
	\citet{DaiGlynXu2023} coined the term ``multi-scale heavy
	traffic''. They study steady-state convergence in generalized Jackson
	networks (GJNs), single-class networks with general inter-arrival and
	service time distributions. They demonstrate that for GJNs, under
	multi-scale heavy traffic, the appropriately scaled steady-state
	queue length converges to a product-form limit.
	Our work establishes that MCNs under SBP service policies also exhibit a product-form steady-state limit in this multi-scale heavy traffic regime, which suggests that the asymptotic product-form steady-state phenomenon observed in the multi-scale heavy traffic regime may hold more broadly across stochastic processing networks.

	To prove the steady-state convergence, we employ the basic adjoint relationship (BAR) approach 
	for continuous-time, discrete-event stochastic systems with general distributions of event times. 
	BAR uses the fundamental theorem of calculus to directly analyze the balance equation at steady state, breaking it down into model primitives such as event time distributions.
	The term ``BAR'' was first introduced in \citet{HarrWill1987} to characterize the stationary distribution of a semimartingale reflecting Brownian motion (SRBM). The BAR approach is formally developed in recent works to prove the steady-state convergence of GJNs \citep{BravDaiMiya2017, Miya2018}.
	Subsequent work by \citet{BravDaiMiya2023} develops the Palm version of the BAR to study the steady-state convergence of MCNs under SBP and demonstrates that the BAR is a natural approach to prove the steady-state convergence.
	Our work employs the BAR analysis in \citet{BravDaiMiya2023}.  The BAR approach has been used in~\citet{DaiGlynXu2023} and~\citet{GuanXuDai2024}.
	
	In addition to proving the steady-state convergence, we also employ BAR to analyze the steady-state moment of scaled queue length processes and establish a uniform moment bound, which is a critical step in establishing the product-form limit.
	A similar uniform moment bound of scaled queue length processes, proven in~\citet{GuanChenDai2023}, is also essential in \citet{DaiGlynXu2023} for proving the asymptotic product-form of GJNs in multi-scale heavy traffic. For MCNs under SBP, the high-priority class moment bound has been previously studied in~\cite{CaoDaiZhan2022}, where the authors provided a sufficient condition for the high-priority class moment collapse. Our work, on the other hand, is the first to establish such a moment bound for scaled queue-length vectors for low-priority classes in general MCNs. To establish the desired uniform moment bound for low-priority classes, we need to carefully utilize the properties of the uniform moment bound for high-priority classes in~\cite{CaoDaiZhan2022} and additionally assume that certain reflection matrix is a $\mathcal{P}$-matrix.

	\subsection{Literature Review}

	``Product-form'' stationary distribution, which implies independence of each component, has received much attention in the literature on stochastic processing networks, due to its mathematical tractability, analytical simplicity, and practical applicability.
	The classical work~\citep{Jack1957} establishes that the queue length vector process of open Jackson networks, a single-class queueing network with exponential inter-arrival and service time distributions, admits a ``product-form'' stationary distribution. 
	This ``product-form'' phenomenon has also been identified in several classical MCNs, such as the BCMP network in \citet{BaskChanMuntPala1975} and Kelly's network in \citet{Kell1975}.
	For a comprehensive summary of product-form distributions up to the 1990s, one can refer to the textbook by~\citet{Serf1999}.
	
	However, many networks do not admit product-form stationary distribution. 
	Even in Jackson networks, when the inter-arrival and service time are generally distributed, known as general Jackson networks (GJNs), the product-form phenomenon no longer holds, which often makes performance evaluation difficult. 
	To evaluate the steady-state performance, a prominent alternative to analyzing the exact system is to study the heavy-traffic limit, where systems operating near full utilization are often more accessible~\citep{King1962}.
	
	The conventional heavy-traffic regime assumes that the traffic
	intensities of all stations in the network converge at the \emph{same
		rate}~\citep{IgleWhit1970, Reim1984}.
	\citet{John1983} and~\citet{Reim1984} analyze single-class GJNs and establish
	that the appropriately scaled queue length process converges weakly to
	their corresponding SRBM limits in conventional heavy traffic.
	Since Harrison first introduced Brownian system models in
	\citet{Harr1988}, there have been many papers establishing (conventional)
	heavy traffic limit (process-level) convergence for MCNs under various
	service policies \citep{Will1998a, Bram1998c, ChenZhan2000,
		ChenZhan2000b, BramDai2001}. 
	
	Process-level convergence does not imply steady-state convergence.
	Before the appearance of \citet{BravDaiMiya2017, BravDaiMiya2023},
	to justify the steady-state convergence, the classical technique that has dominated the literature for the past two decades is to validate the ``interchange of limits.'' 
	This method is first argued in the pioneering paper by
	\citet{GamaZeev2006}, which studies the steady-state convergence in
	GJNs.  Subsequent work in \citet{BudhLee2009} improves the results in
	\citet{GamaZeev2006} by relaxing the assumptions to more standard
	conditions on the network primitives.  Extending the ``interchange of
	limits'' technique to MCNs, the paper by~\citet{Gurv2014a} proves the
	steady-state convergence for MCNs under queue-ratio service
	disciplines, which include SBP service policy as a special
	case. Subsequent works in \citet{YeYao2016, YeYao2018} prove the
	steady-state convergence covering a wider class of service disciplines
	of MCNs. 
	Despite these steady-state convergence results, the
	stationary distributions of these SRBMs are typically not of
	product form, and there have been ongoing effort to design scalable
	algorithms to compute these stationary distributions~\citep{DaiHarr1992, ShenChenDaiDai2002, BlanChenSiGlyn2021}. Our product-form steady-state requires little computational
	effort for carrying out numerical work.

	\section{Multiclass Queueing Networks}
	\label{sec:multiclass}
	
	In this section, we formally introduce multiclass queueing networks (MCNs) and static buffer priority (SBP) service policies. We follow closely the terminology and notation in \cite{BravDaiMiya2023}.
	
	In an MCN, there are $J$ service stations that process $K$ classes of jobs, where $J$ and $K$ are positive integers. We let $\calj=\{1,2,\ldots, J\}$ denote the set of stations and $\calk=\{1,2,\ldots, K\}$ denote the set of classes. Each station is assumed to have a single server and unlimited waiting space. When an external job arrives at the network, the job will be sequentially processed by a finite number of stations and eventually leave the network. At any given time during its lifespan in the network, the job belongs to exactly one of the job classes. As the job moves through the network, the class of the job will be updated at each service completion. The ordered sequence of classes that a job takes is called its route.
	We let $\calc(j)$ denote the set of classes that belong to station $j$ and $s(k)$ denote the station to which class $k$ belongs. 
	
	For each class $k$, we associate it with two non-negative numbers $\alpha_k\geq0$ and $m_k>0$, two independent and identically distributed (i.i.d.) sequences of random variables, $T_{e,k}(\cdot)=\{T_{e,k}(i), i\geq 1\}$ and $T_{s,k}(\cdot)=\{T_{s,k}(i),i\geq1\}$, and a third sequence of $\R^K$-valued i.i.d.\ random vectors $\xi^{(k)}(\cdot)=\{\xi^{(k)}(i), i\geq 1\}$. We assume that the $3K$ sequences
	\begin{equation*}
		T_{e,1}(\cdot),\ldots, T_{e,K}(\cdot),\quad T_{s,1}(\cdot), \ldots, T_{s,K}(\cdot),\quad\text{and}\quad\xi^{(1)}(\cdot),\ldots, \xi^{(K)}(\cdot)
	\end{equation*}
	are defined on a common probability space $(\Omega, \calf, \Prob)$ and mutually independent. Let $T_{e,k}$, $T_{s,k}$ and $\xi^{(k)}$ denote random generic element from respective sequences. Following the convention in~\cite{BravDaiMiya2023}, $T_{e,k}$ and $T_{s,k}$ are unitizied, i.e., $\E[T_{e,k}]=1$ and $\E[T_{s,k}]=1$. For $\xi^{(k)}$, it takes values from the set $\{e^{(0)}, e^{(k)}, k\in\calk\}$, where $e^{(k)}$ is a $K$-dimensional vector with all entries being $0$, except for its $k$-th element being $1$ and $e^{(0)}$ is a $K$-dimensional vector with all entries being $0$.
	
	The value $\alpha_k$ denotes the external arrival rate to class $k$. When a class $k$ does not have external arrivals, we simply set $\alpha_k=0$. We denote the set of classes with external arrivals by
	\begin{equation*}
		\cale=\{k\in\calk, \alpha_k> 0\},\quad E=|\cale|,
	\end{equation*}
	where $|\cals|$ denotes the cardinality of set $\cals$.
	For $k\in\cale$ and $i\geq1$, $T_{e,k}(i)/\alpha_k$ denotes the inter-arrival time between the $(i-1)$th and the $i$th externally arriving job of class $k$.
	For $k\in\calk$, $m_k$ is the mean service time for class $k$ jobs, and $m_kT_{s,k}(i)$ denotes the service time for the $i$th class $k$ job.
	We place the following moment assumption on $T_{e,k}$ and $T_{s,k}$.
	\begin{assumption}
		\label{assumption:t-moment}
		There exist a small $\delta_0>0$ such that
		\begin{equation}
			\label{eq:t-moment}
			\E\left[ (T_{e,k}(1))^{J+1+\delta_0}\right] < \infty,\quad\text{for } k\in\cale\quad \text{and}\quad \E\left[ (T_{s,k}(1))^{J+1+\delta_0}\right] < \infty,\quad\text{for } k\in\calk.
		\end{equation}
	\end{assumption}
	We then set
	\begin{align}
		c_{e,k}^2&=\Var(T_{e,k})/\E[T_{e,k}]^2=\Var(T_{e,k})\quad\text{for } k\in\cale,\label{eq:ce-def}\\
		\text{and}\quad c_{s,k}^2&=\Var(T_{s,k})/\E[T_{s,k}]^2=\Var(T_{s,k})\quad\text{for } k\in\calk,\label{eq:cs-def}
	\end{align}
	which are the squared coefficients of variation for inter-arrival and service times.
	
	The random variable $\xi^{(k)}(i)$ characterize the $i$-th departure from class $k$. Setting $\xi^{(k)}(i)=e^{(l)}$ means that the $i$-th service completion at class $k$ will become a class $l$ job upon service completion at class $k$ for $l\in\calk$ or will leave the network if $l=0$. We introduce the $K\times K$ routing matrix $P=(P_{kl})$,
	\begin{equation*}
		P_{kl}=\Prob(\xi^{(k)}=e^{(l)}),\quad l\in\calk,\quad\text{and}\quad
		P_{k0}=\Prob(\xi^{(k)}=e^{(0)})=1-\sum_{l\in\calk}P_{kl}.
	\end{equation*}
	We focus on open networks in this paper, that is $I-P$ is invertible, which ensures that each job will eventually exit the network.%\\
	
	\paragraph*{SBP Service Discipline.} 
	A service discipline dictates the sequence of jobs to be served at each station. We assume that the service discipline is non-idling, which means that the server is always active as long as there are jobs waiting at the station for service. In this paper, we focus on the static buffer priority (SBP) service discipline, which specifies the service order of jobs based on their classes. At each station, classes at the station are assigned a fixed ranking from the highest priority to the lowest priority. To avoid ambiguity, the ranking is strict and does not contain any ties. The server will always begin processing jobs from the non-empty class of the highest priority, and will only move on to jobs of lower priority classes when no more jobs of strictly higher priority are remaining at the station. 
	For jobs within the same priority class, the server will follow a first-come first-served fashion and serve the leading job or the longest waiting one. 
	Moreover, we assume that the SBP service discipline is preemptive-resume, where the arrival of a job of a higher-priority class leads to the interruption of the current job being processed, with the server switching to the newly arrived, higher-priority job. 
	After completing all jobs of higher-priority classes, the server will resume the previously interrupted job, picking up from where it left off.

	\paragraph*{Traffic Equation.}
	When studying open queueing networks, we make use of the solution denoted by $\lambda_l$ for $l\in\calk$ to the system of traffic equations, 
	\begin{equation*}
		\lambda_k=\alpha_k + \sum_{l\in\calk}\lambda_lP_{lk},\quad k\in\calk,
	\end{equation*}
	or written compactly in matrix-vector format, $\lambda = \alpha + P^\top\lambda$, where the superscript $\top$ denotes the transpose operator. In this paper, all vectors are defaulted to column vectors, unless explicitly stated otherwise. The solution $\lambda$ has expression $\lambda = (I-P^\top)^{-1}\alpha$ and is unique, for $I-P$ is invertible under the open network assumption. $\lambda_k$ denotes the nominal total arrival rate at class $k$, including both external arrivals and internal arrivals from other classes.
	
	Let $\rho_j$ denote the traffic intensity for the $j$th station and $\rho_j$ is computed as 
	\begin{equation}
		\label{eq:rho-j}
		\rho_j=\sum_{k\in\calc(j)}\lambda_km_k.
	\end{equation}
	Rewriting \eqref{eq:rho-j} in matrix-vector format, we have $\rho=CM\lambda$, where $M=\diag(m)$ and $C$ is the $J\times K$ constituency matrix such that for each class $k$, $C_{jk}=1$ if station $j=s(k)$, and 0 otherwise.
	When $\rho_j< 1$, $\rho_j$ often can be interpreted as the fraction of the time that the station $j$ is busy, but this interpretation is not always guaranteed in MCNs. 
	In this paper, we are particularly interested in networks in which $\rho_j<1$ but is close to $1$. Such networks are called ``heavily loaded.'' Moreover, we consider a multi-scale heavy traffic situation. The precise setup will be formally defined in Section~\ref{sec:main}.
	
	\paragraph*{Markov Process.}
	At the time $t\geq0$, for $k\in\calk$, let $Z_k(t)$ denote the number of class $k$ jobs including possibly the one in service, and let $R_{s,k}(t)$ be the remaining service time of a class $k$ job at time $t$. When $Z_k(t)=0$, then set $R_{s,k}(t)$ as the service time of the next class $k$ job. For $k\in\cale$, let $R_{e,k}(t)$ denote the remaining time til the next external arrival of a class $k$ job. Consolidating into vector forms, we have $Z(t), R_e(t), R_s(t)$ be the random vectors with $k$-th entries being $Z_k(t), R_{e,k}(t), R_{s,k}(t)$ respectively. Define
	\begin{equation*}
		X(t)\equiv \big(Z(t), R_e(t), R_s(t)\big),\quad t\geq0,
	\end{equation*}
	on state space $\bS\equiv Z_+^K\times R_+^E\times R_+^K$. It is easy to see that $X(\cdot)\equiv \{X(t), t\geq0\}$ is a Markov process with respect to the filtration $\F^X\equiv\{\calf_t^X, t\geq0\}$, where $\calf_t^X=\sigma(\{X(u), 0\leq u\leq t\})$. We additionally assume that $X(\cdot)$ is right continuous on $[0,\infty)$ and has left limit in $(0,\infty)$.
	When the inter-arrival and service time are exponentially distributed, thanks to the memoryless property of exponential distribution, $\{Z(t), t\geq0\}$ itself is a continuous-time Markov chain on state space $Z_+^K$.

	\section{Multi-scale Heavy Traffic Assumption and the Main Result}
	\label{sec:main}
	\setcounter{equation}{0}
	
	In this section, we first introduce the assumptions that will be used in our main theorem and follow by stating the main result in Theorem~\ref{thm:main}.
	
	We consider a family of MCNs indexed by $r$, with $r$ tending towards $0$ through a strictly decreasing family of values in $(0,1)$. To keep the presentation clean, for each $r\in(0,1)$, we have the mean service time as the only network parameter to depend on $r$ and denote $m_k^{(r)}$ as the mean service time for class $k$ in the $r$-th network. Under this setup, the external arrival rate $\alpha$, unitized inter-arrival and service times $T_{e,k}$ and $T_{s,k}$, and routing matrix $P$ do not depend on $r$, and subsequently the nominal total arrival rate vector $\lambda$ does not depend on $r$. The SBP service policies for the sequence of networks are the same, i.e., the priority order in SBP policies is fixed and does not depend on $r$.
	
	\begin{assumption}[Multi-scale heavy traffic]
		\label{assumption:multi-scale-heavy-traffic}
		We assume that there are $\R^K$-valued vectors $\alpha>0$ with $\alpha_k=0$ for $k\notin\cale$, $m>0$, and $m^\ast$ such that
		\begin{align}
			&m_k^{(r)}=m_k+r^{s(k)}m^\ast_k, \text{for }k\in\calk,\label{eq:mu-heavy}\\
			&M^{(r)}=\diag(m_k^{(r)}),\quad M=\diag(m_k),\quad\text{and}\quad M^\ast=\diag(m_k^\ast),\nonumber\\
			&\rho = CM\lambda=e,\quad b\equiv -CM^\ast\lambda>0,\label{eq:rho-heavy}
		\end{align}
		where $e$ is the $J$-vector of all $1$'s and $\lambda=(I-P^\top)^{-1}\alpha$.
	\end{assumption}
	Equations \eqref{eq:mu-heavy} and \eqref{eq:rho-heavy} imply
	\begin{equation*}
		\rho_j^{(r)}=1-r^jb_j,\quad j\in\calj,\quad\text{and}\quad
		\mu_k^{(r)}=\mu_k+r^{s(k)}\mu_k^\ast+o(r^{s(k)})\quad\text{as }r\to0,
	\end{equation*}
	where $\mu_k^{(r)}=1/m_k^{(r)}$, $\mu_k=1/m_k$ and $\mu_k^\ast=-m_k^\ast/(m_k^2)$. 
	The traffic intensities at stations, $\rho_j^{(r)}$, approach the critical load 1 at separated orders of r. This contrasts with conventional heavy-traffic conditions, where the traffic intensities of all stations converge to the critical load at the same rate. Therefore, we term this assumption as the ``multi-scale heavy traffic condition.''
	
	Throughout this paper, we adopt the following notations. $f(r)=o(g(r))$ means $f(r)/g(r)\to0$ as $r\to0,$
	and $f(r)=O(g(r))$ means $|f(r)| \leq c|g(r)|$ as $r\to0$ for some $c>0$.
	
	Next, we denote the Markov process describing the multiclass network with index $r\in(0,1)$ by $X^{(r)}(\cdot)=\{X^{(r)}(t),t\geq0\}$, where
	\begin{equation}
		\label{eq:x-state}
		X^{(r)}(t)=\Big(Z^{(r)}(t), R_e^{(r)}(t), R_s^{(r)}(t)\Big)\in\bS,\quad t\geq0.
	\end{equation}
	It is well known that simply assuming the nominal traffic intensity $\rho_j^{(r)}<1$ in MCN with SBP service policy will not always lead to a stable network. Therefore, to study the stationary distribution of the Markov process $X^{(r)}(\cdot)$ in this work, we place the following assumption to ensure stability. 
	\begin{assumption}
		\label{assumption:stability}
		For each index $r\in(0,1)$, $X^{(r)}(\cdot)$ has a unique stationary distribution.
	\end{assumption}

	In this paper, we assume the following uniform moment bound for the unscaled high-priority queue length random variables.
	\begin{assumption}
		\label{assumption:high-moment} 
		For each $k\in\calh$, there exists $r_0\in(0,1)$ and small $\epsilon_0>0$ such that
		\begin{equation*}
			\sup_{r\in(0,r_0)}\E\Big[\Big(Z_k^{(r)}\Big)^{J+\epsilon_0}\Big]<\infty.
		\end{equation*}
	\end{assumption}
	
	Under Assumption~\ref{assumption:stability}, Assumption~\ref{assumption:high-moment} intuitively makes sense, as each server would still have excess capacity after serving all high-priority jobs, 
	%\lhdelete{i.e., $\lim_{r\to0}1-\rho_k^{(r)}>0$ for $k\in\calh$,}
	and hence the queue lengths of high-priority classes will not blow up even when the network becomes critically loaded. It has been proven in~\cite{CaoDaiZhan2022} that Assumption~\ref{assumption:high-moment} holds for re-entrant line under first-buffer-first-serve (FBFS) or last-buffer-first-serve (LBFS) policies. It is still unclear whether Assumption~\ref{assumption:high-moment} would hold for general MCNs, and the work in \cite{CaoDaiZhan2022} provides a sufficient condition for verifying Assumption~\ref{assumption:high-moment} in general MCNs.
	
	Next, we impose the following structural assumption on a reflection matrix. The detailed definition of the reflection matrix is postponed to Section~\ref{sec:postpone-param}, at the end of this section.

	\begin{assumption}
		\label{assumption:q-m-matrix} 
		A certain reflection matrix $R$ is well defined and is a $\mathcal{P}$-matrix.
	\end{assumption}
	The matrix $R$ is often associated with the reflection matrix in the SRBM that serves the diffusion approximation of a multiclass network; see, for example,~\citet{ChenZhan2000b,BravDaiMiya2023}.
	$\mathcal{P}$-matrix is a type of matrix where all its principal minors are positive. This special matrix structure has been studied extensively across many fields, including the stability analysis of SRBM~\citep{KharTahaYaac2002, BramDaiHarr2010}.
	It is a special case of completely-$\mathcal{S}$ matrix, a property that has been assumed in~\citet{BravDaiMiya2023}.
	The $\mathcal{P}$-matrix assumption has been proven to hold under both FBFS and LBFS policies \cite{DaiYehZhou1997}. We provide a more detailed discussion of the reflection matrix $R$ in Appendix~\ref{sec:equivalent-reflection}.
	
	Now, we are ready to state the main result.
	\begin{theorem}
		\label{thm:main}
		Suppose that Assumption~\ref{assumption:t-moment}--\ref{assumption:q-m-matrix} hold. Then,
		\begin{equation}
			\label{eq:main-convergence}
			\Big(rZ_1^{(r)}, r^2Z_2^{(r)},\ldots, r^JZ_J^{(r)}, rZ_H^{(r)}\Big)\Rightarrow\Big(Z_1^\ast, Z_2^\ast,\ldots, Z_J^\ast, 0_H\Big),\quad\text{as } r\to0,
		\end{equation}
		where ``$\Rightarrow$'' stands for convergence in distribution. 
		Moreover, $Z_1^\ast, Z_2^\ast,\ldots, Z_J^\ast$ are independent, and for each class $k\in\call$, $Z_k^\ast$ is an exponential random variable with mean $d_k$ given by
		\begin{equation}
			\label{eq:sig-def}
			d_k=\frac{\sigma_k^2}{2(1-w_{kk})\mu_kb_k},\quad\text{and}\quad
			\sigma_k^2=2q^\ast(u^{(k)}),
		\end{equation}
		with quadratic function $q^\ast$ and vector $u^{(k)}$ carefully defined in Section~\ref{sec:postpone-param}.
	\end{theorem}
	\begin{remark}
		Note that we have abused the notation in \eqref{eq:main-convergence} and adopted the convention that $z=(z_1,\ldots, z_J, z_H)$. While $z_J$ denotes the element corresponding to class $J$, $z_H=(z_k, k\in\calh)\in\R^\calh$ is a vector of elements corresponding to all high-priority classes.
	\end{remark}

	In \eqref{eq:main-convergence}, for high-priority classes, $rZ_H^{(r)}\Rightarrow0$, which suggests that the load from high-priority jobs is absent from the heavy traffic limit. This SSC phenomenon naturally follows from the SBP service discipline under Assumption~\ref{assumption:stability} and~\ref{assumption:high-moment}. 
	For low priority classes, i.e., $j\in\call$, it is not surprising that appropriately scaled queue length processes $r^jZ_j^{(r)}\Rightarrow Z_j^\ast$, but what is striking is the independence result of $(Z_1^\ast, Z_2^\ast,\ldots, Z_J^\ast)$, which gives rise to a product-form stationary distribution. We emphasize that the pre-limit stationary distribution is not of product form, and neither is the conventional single-scale heavy traffic limit, which has been discussed thoroughly in \cite{BravDaiMiya2023}.

	An important intermediate step in proving the product-form limit is the establishment of the uniform moment bound for scaled low-priority queue lengths in Theorem~\ref{assumption:low-moment}.
	The uniform moment bound for scaled low-priority queue lengths can be proved under Assumption~\ref{assumption:t-moment} and Assumptions~\ref{assumption:high-moment} and \ref{assumption:q-m-matrix}. A uniform moment bound for appropriately scaled queue lengths has been similarly established for general Jackson networks in~\citet{GuanChenDai2023}.
	
	\begin{theorem}
		\label{assumption:low-moment}
		Under Assumptions~\ref{assumption:t-moment},~\ref{assumption:multi-scale-heavy-traffic}--\ref{assumption:q-m-matrix},
		for each $k\in\call$, there exists $r_0\in(0,1)$ and small $\epsilon_0>0$ such that
		\begin{equation}
			\label{eq:low-moment}\sup_{r\in(0,r_0)}\E\Big[\Big(r^kZ_k^{(r)}\Big)^{J+\epsilon_0}\Big]<\infty.
		\end{equation}
	\end{theorem}
	A detailed proof of Theorem~\ref{assumption:low-moment} is postponed to Appendix~\ref{sec:moment-bound-proof}.

	\subsection{Postponed Parameter Definition}
	\label{sec:postpone-param}
	Here, we provide rigorous definitions for several quantities that are mentioned earlier but not formally defined, namely the reflection matrix $R$ in Assumption~\ref{assumption:q-m-matrix} and the variance parameter $\sigma_k^2$ in Theorem~\ref{thm:main}.
	
	\paragraph*{Reflection Matrix.}
	To define the reflection matrix, we first introduce the following helper quantities. We start with the following matrix $A$ defined as
	\begin{equation*}
		A=(I-P^\top)\diag(\mu)(I-B),
	\end{equation*}
	where $B$ is a $K\times K$ matrix defined as
	\begin{equation*}
		B_{kl}=\begin{cases}
			1&\text{if }l=k+,\\
			0&\text{otherwise},
		\end{cases}
	\end{equation*}
	where $k+$ denotes the class which is at the same station as Class $k$ and with the priority immediately above Class $k$. In the two-station-five-class re-entrant line example discussed in the introduction  Section~\ref{sec:intro}, setting $k=3$ gives $k+=5$.

	Writing $A$ in block form, we have
	\begin{equation*}
		A=\begin{pmatrix}
			A_L & A_{LH}\\ A_{HL} & A_H
		\end{pmatrix},
	\end{equation*}
	where $A_L$ and $A_H$ are the principal submatrices corresponding the index set $\call$ and $\calh$ respectively, and $A_{LH}$ and $A_{HL}$ are the remaining corresponding blocks of $A$. Following~\cite{BravDaiMiya2023}, we adopt the same invertibility assumption for $A_H$ to ensure that the reflection matrix $R$ is well-defined.
	
	\begin{assumption}
		\label{assumption:ah-invertible} 
		The matrix $A_H$ is invertible.
	\end{assumption}
	
	Next, we construct the following matrix $Q\in\R^{J\times J}$, with entries defined as
	\begin{equation}
		\label{eq:q-def}
		Q_{ij}=P_{ij}-P_{i,H}A_H^{-\top}A_{j,H}^\top,\quad \forall\,i,j\in\call.
	\end{equation}
	Writing as a product of matrices, we have $Q=P_{L}-P_{LH}A_H^{-\top}A_{LH}^\top$. To clarify the notation, following the submatrix notation introduced above, $P_L$ denotes the principal submatrix corresponding to the index set $\call$, and $A_{LH}^\top=(A_{LH})^\top$.
	
	Given $Q$, we define the reflection matrix as
	\begin{equation}
		R=I-Q^\top,\label{eq:r-def}
	\end{equation}
	and impose the $\mathcal{P}$-matrix structural assumption stated in Assumption~\ref{assumption:q-m-matrix}.
	
	We will mainly use the following two properties of a $\mathcal{P}$-matrix.
	Firstly, all diagonal elements of a $\mathcal{P}$-matrix are positive and all leading principal matrices are invertible. Secondly, the inverse of a $\mathcal{P}$-matrix is also a $\mathcal{P}$-matrix, and hence all diagonal elements of the inverse of $R$ are positive. 
	
	\paragraph*{Variance Parameter.}
	To define the variance $\sigma_k^2=2q^\ast(u^{(k)})$ in Theorem~\ref{thm:main}, we need to define the quadratic function $q^\ast$ and the series of vector $u^{(k)}$ indexed by $k\in\call$.
	
	Starting with the function $q^\ast$, we have
	\begin{equation}
		\label{eq:q-star-def}
		\begin{aligned}
			q^\ast(\theta)&=\frac{1}{2}\sum_{l\in\cale}\alpha_lc_{e,l}^2\theta_l^2
			+\frac{1}{2}\sum_{l\in\calk}\lambda_l\Bigg(\sum_{l'\in\calk}P_{ll'}\theta_{l'}^2-\Big(\sum_{l'\in\calk}P_{ll'}\theta_{l'}\Big)^2+c_{s,l}^2\Big(-\theta_l+\sum_{l'\in\calk}P_{ll'}\theta_{l'}\Big)^2\Bigg),
		\end{aligned}
	\end{equation}
	where $c_{e,l}^2$ and $c_{s,l}^2$ are squared coefficients of variation defined in~\eqref{eq:ce-def} and~\eqref{eq:cs-def}.
	
	To define the vector $u^{(k)}$, we need to introduce the following $J\times J$ matrix $w=(w_{ij})$, which is well-defined under Assumptions~\ref{assumption:q-m-matrix} and~\ref{assumption:ah-invertible}.
	With $Q_{ij}$ defined in~\eqref{eq:q-def}, for each $k\in\call$, each column of $w$ matrix is defined as
	\begin{align}
		(w_{1,k},\ldots, w_{k-1,k})^\top&=\Big(I-Q_{1:k-1,1:k-1}\Big)^{-1}Q_{1:k-1,k},\label{eq:w-def-1}\\
		(w_{k,k},\ldots, w_{L,k})^\top&=Q_{k:L,k}
		+Q_{k:L,1:k-1}\Big(I-Q_{1:k-1,1:k-1}\Big)^{-1}Q_{1:k-1,k}.\label{eq:w-def-2}
	\end{align}
	
	Utilizing block matrix inversion properties, we recover the following relationship that 
	\begin{equation*}
		(1-w_{kk})^{-1} = (I-Q_{1:k, 1:k})^{-1}_{k,k}>0, \quad\forall k\in\call.
	\end{equation*}
	By the second property of $\mathcal{P}$-matrix mentioned above, it follows that
	$1-w_{kk}>0$. 
	
	We briefly explain the above notation convention $P_{S_1, S_2}$ with $S_1, S_2\subseteq \calk$. $P_{S_1, S_2}$ represents the submatrix of $P$ whose entries are $(P_{kl})$ with $k\in S_1$ and $l\in S_2$ following the conventions below, that
	\begin{align*}
		&[l:k] = \{l, l+1,\ldots, k\},\quad \text{when }l,k\in\call,\text{ and } l\leq k,\\
		&P_{S_1,l}=P_{S_1,\{l\}},\quad P_{k,S_2}=P_{\{k\}, S_2},
	\end{align*}
	and expression $P_{S_1, S_2}$ is interpreted to be 0 when either $S_1$ or $S_2$ is an empty set.
	
	Given these helper quantities, we define the series of vectors indexed by $k\in\call$ as follows
	\begin{equation}
		\label{eq:u-vec-def}
		u^{(k)} = \begin{pmatrix}
			u_L^{(k)} \\ u_H^{(k)}
		\end{pmatrix},\qquad\text{with}\quad u_L^{(k)}=\begin{pmatrix}
			w_{1:k-1,k}\\1\\0_{k+1:L}
		\end{pmatrix}\quad\text{and}\quad u_H^{(k)}=-A_H^{-\top}A_{LH}^\top u_L^{(k)}.
	\end{equation}
	
	As such, we have completed the definition of $\sigma_k^2=2q^\ast(u^{(k)})$ in Theorem~\ref{thm:main}.

	\section{Numerical Experiments}
	\label{sec:experiments}
	
	In this section, we present a series of numerical experiments to demonstrate our theory. We use the two-station, five-class re-entrant line in Figure~\ref{fig:2s5c-fig} discussed in the introduction as the case study. Following the discussion in Section~\ref{sec:intro}, here we deviate from the convention $\ell(j)=j$ for $j \in \calj$, and instead, we index the classes according to the traffic flow in Figure~\ref{fig:2s5c-fig}. We employ the static buffer priority policy $\{(5,3,1), (2,4)\}$, implying that the first station follows a last-buffer-first-serve policy, while the second station uses a first-buffer-first-serve policy.

	We examine how well our heavy-traffic formula approximates a heavily loaded pre-limit system.
	The external arrival rate is fixed at $\alpha_1=1$ and the limiting mean service times are 
	\begin{equation}
		\label{eq:limit-m}
		\Big(m_1,m_3,m_5\Big)=\rho_1\Big(\frac{1}{2},\frac{1}{4},\frac{1}{4}\Big)\quad\text{and}\quad \Big(m_2,m_4\Big)=\rho_2\Big(\frac{1}{3},\frac{2}{3}\Big).
	\end{equation}
	We model inter-arrival and service times using gamma distributions $\Gamma(a,b)$ where the parameter $a$ remains fixed regardless of $\rho_1$ and $\rho_2$, as specified in Table~\ref{tab:gamma-dis}. By properties of the gamma distribution, the squared coefficient of variation for these inter-arrival and service time distributions is computed as $c^2=1/a$.
	\begin{table}[htbp]
		\centering
		\begin{tabular}{ccccccc}
			\hline
			&Arrival& Class 1 & Class 2 & Class 3 & Class 4 & Class 5 \\
			\hline
			a&0.75   & 0.95 & 0.6& 0.95 & 0.6& 0.95 \\
			\hline
		\end{tabular}
		\caption{Parameter $a$ in Gamma Distribution}
		\label{tab:gamma-dis}
	\end{table}
	
	We conduct three sets of experiments where the traffic intensity at station 2 is fixed at $\rho_2=0.99$, while varying the traffic intensity at station 1 with $\rho_1 \in \{0.9, 0.96, 0.99\}$. For each choice of $\rho_1$, we approximate the mean queue length as
	\begin{equation*}
		\E\Big[Z_1\Big]\approx d_1/(1-\rho_1)\quad\text{and}\quad \E\Big[Z_4\Big]\approx d_4/(1-\rho_2),
	\end{equation*}
	where $d_1$ and $d_4$ are computed as follows using our closed-form formulas with limiting service rate in~\eqref{eq:limit-m} and the squared coefficient of variation deduced from Table~\ref{tab:gamma-dis} 
	\begin{align*}
		d_1&=\frac{\alpha_1}{2\big(m_1+m_3-m_5\frac{m_2}{m_4}\big)}\Big(\big(m_1+m_3-m_5\frac{m_2}{m_4}\big)^2c_{e,1}^2+m_1^2c_{s,1}^2+m_3^2c_{s,3}^2+m_5^2c_{s,5}^2\\
		&\qquad\qquad\qquad\qquad\qquad\quad+\big(\frac{m_5}{m_4}\big)^2\big(m_2^2c_{s,2}^2+m_4^2c_{s,4}^2\big)\Big),\\
		d_4&=\frac{\alpha_1}{2m_4}\Big((m_2+m_4)^2c_{e,1}^2+m_2^2c_{s,2}^2+m_4^2c_{s,4}^2\Big).
	\end{align*}

	We simulate the network for a total of $10^8$ arrivals, recording the mean queue length. Each setup is repeated 20 times to compute the confidence interval. We also examine the independence between the two low-priority classes. To examine the independence between two random variables, we use the Information Quality Ratio (IQR). IQR quantifies the information content of one variable relative $X$ to another $Y$ against total uncertainty and is calculated as
	\begin{equation*}
		\text{IQR}(X,Y)=\frac{I (X;Y)}{H(X,Y)}=\frac {\sum _{x\in X}\sum _{y\in Y}p(x,y)\log {p(x)p(y)}}{\sum _{x\in X}\sum _{y\in Y}p(x,y)\log {p(x,y)}}-1.
	\end{equation*}
	Despite a non-conventional metric for independence, IQR equals zero if and only if the two random variables $X$ and $Y$ are independent~\citep[Chapter 2]{CoveThoma2005}. The closer the IQR is to zero, the less dependent the two variables are. 
	
	\begin{table}[htbp]
		\centering
		\begin{tabular}{rrrrrr}
			\hline
			& \multicolumn{2}{c}{Class 1} & \multicolumn{2}{c}{Class 4}  &\\
			\cline{2-5} 
			$\rho_1$& \multicolumn{1}{c}{Sim} &   \multicolumn{1}{c}{M-Scale} & \multicolumn{1}{c}{Sim} & \multicolumn{1}{c}{M-Scale} &IQR$(10^{-4})$ \\
			\hline  
			$90\%$& $7.17\pm 0.02 $ & 7.53 & $163.23\pm 4.48 $ &167.75  & $3.88\pm 0.25$\\	
			\hline
			$96\%$& $19.71 \pm 0.11 $& 20.08 & $ 166.52\pm4.25 $ &167.75 & $9.04\pm0.63$ \\ 
			\hline
			$99\%$ & $82.80\pm 2.04$ & 82.83 & $159.44 \pm 3.12$ & 167.75 & $26.83\pm 2.94$\\
			\hline
		\end{tabular}
		\caption{Levels of load separation.}
		\label{tab:queue-table}
	\end{table}

	The approximated values are given in the respective M-Scale columns in Table~\ref{tab:queue-table}.
	Compared to the simulation results, our closed-form formula provides decent approximations. The near-zero IQR values also suggest approximate independence of low-priority queue lengths in heavy traffic. Interestingly, even when both stations are operating at a load of 0.99, i.e., $\rho_1=\rho_2=0.99$, with no load separation, our approximation still performs well. This result is intriguing and may require further investigation.
	
	Additionally, we explore how well the exponential distribution theory approximates the actual distribution by considering the case where $\rho_1=0.96$ and $\rho_2=0.99$. We plot histograms of $Z_1$ and $Z_4$ in Figure~\ref{fig:z1-plot} and~\ref{fig:z4-plot} respectively, with the $x$-axis representing queue length and the $y$-axis representing probability. The orange bars represent the geometric distribution derived from the exponential distribution in Theorem~\ref{thm:main}, while the red line shows the empirical distribution. We observe that the geometric approximation aligns well with the empirical data, with minor deviations at the beginning of the distribution but generally capturing the tail distribution accurately.
	\begin{figure}[htbp]
		\centering
		\begin{minipage}{0.3\textwidth}
			\centering
			\includegraphics[width=\textwidth]{Plots/Q1-histogram.png}
			\subcaption{Overall Histogram}\label{fig:plot1-z1}
		\end{minipage}%
		\hfill
		\begin{minipage}{0.3\textwidth}
			\centering
			\includegraphics[width=\textwidth]{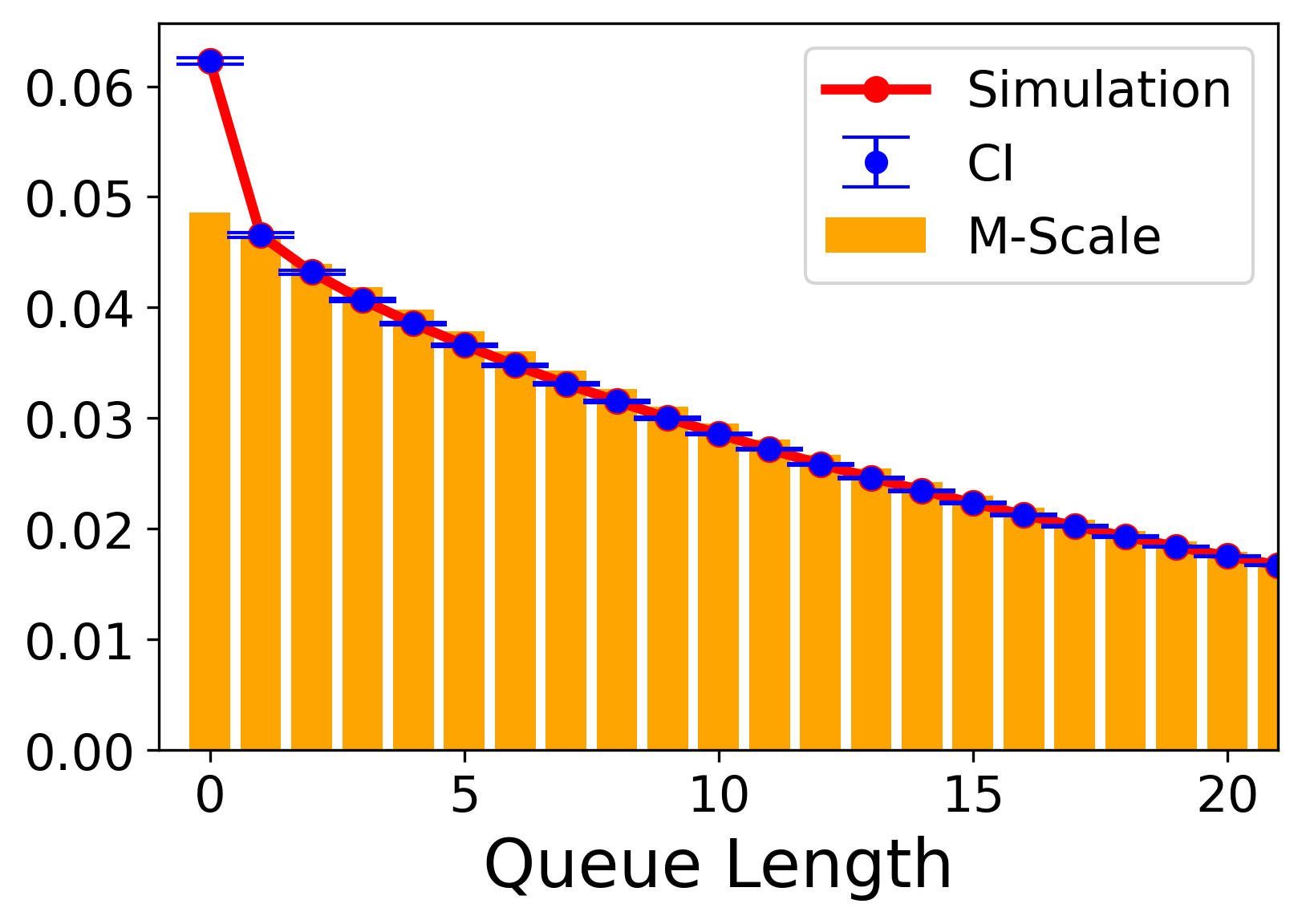}
			\subcaption{Head-Zoomed Histogram}\label{fig:plot2-z1}
		\end{minipage}%
		\hfill
		\begin{minipage}{0.3\textwidth}
			\centering
			\includegraphics[width=\textwidth]{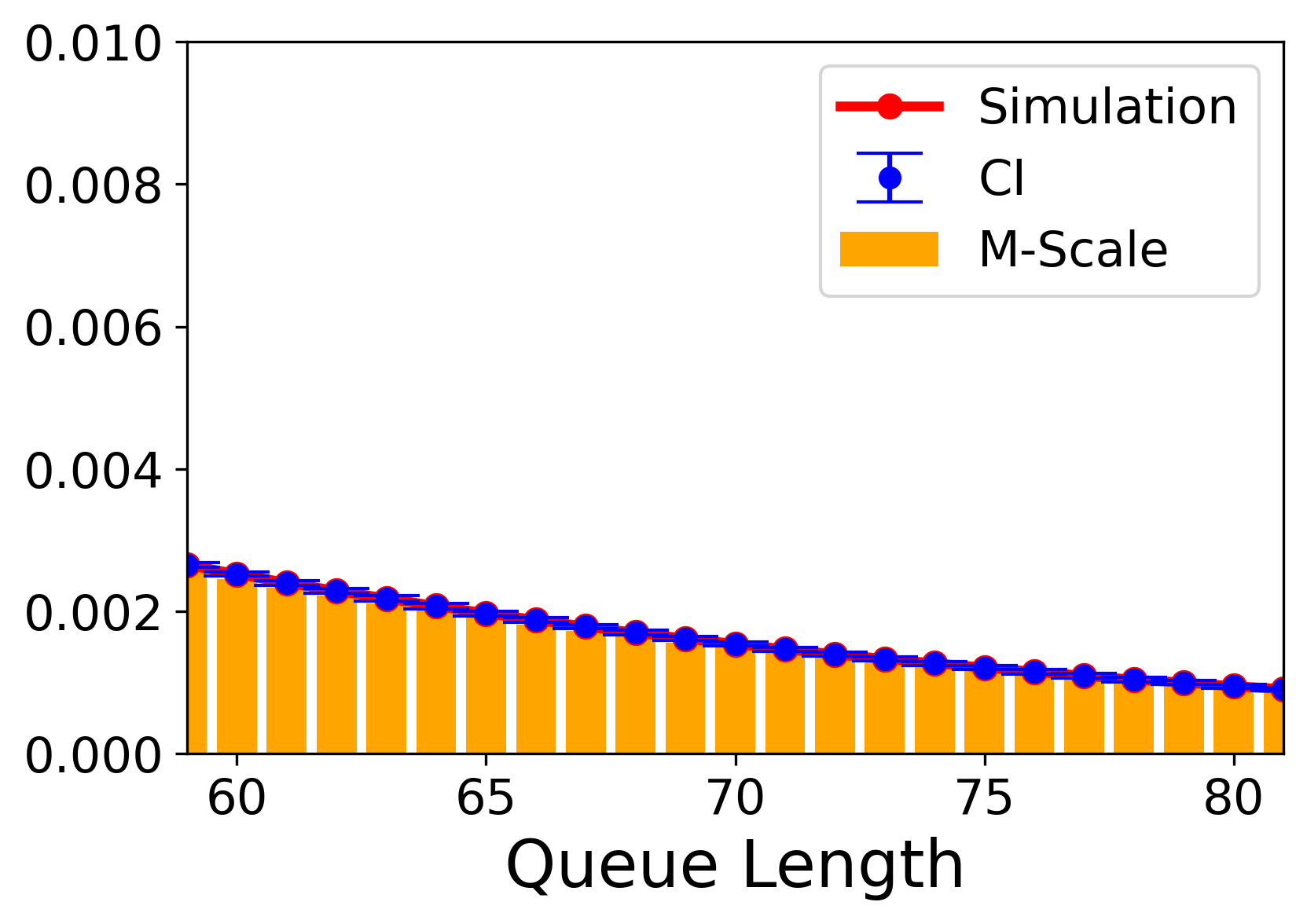}
			\subcaption{Tail-Zoomed Histogram}\label{fig:plot3-z1}
		\end{minipage}
		\caption{Histogram of $Z_1$}
		\label{fig:z1-plot}
	\end{figure}
	
	\begin{figure}[htbp]
		\centering
		\begin{minipage}{0.3\textwidth}
			\centering
			\includegraphics[width=\textwidth]{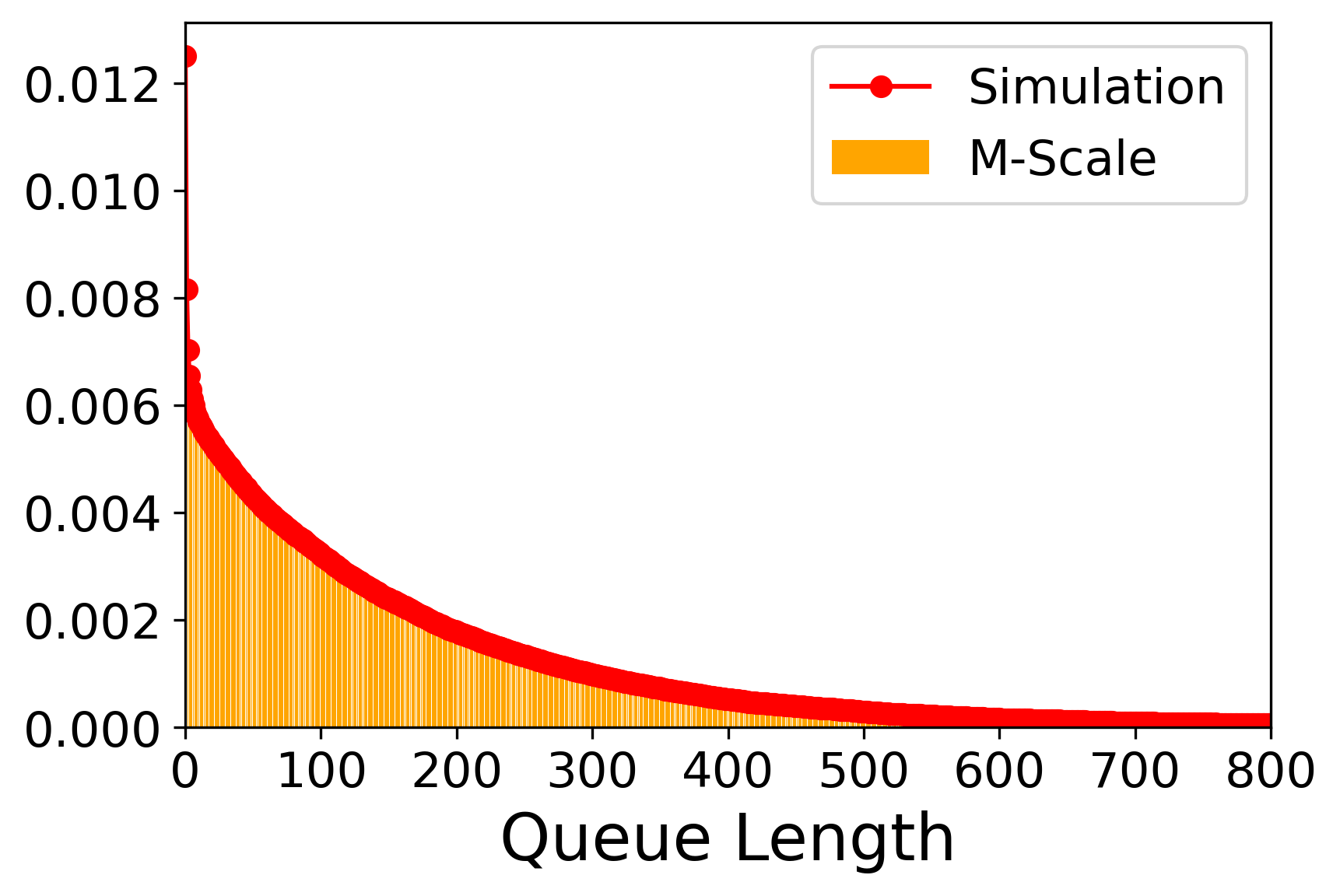}
			\subcaption{Overall Histogram}\label{fig:plot1-z4}
		\end{minipage}%
		\hfill
		\begin{minipage}{0.3\textwidth}
			\centering
			\includegraphics[width=\textwidth]{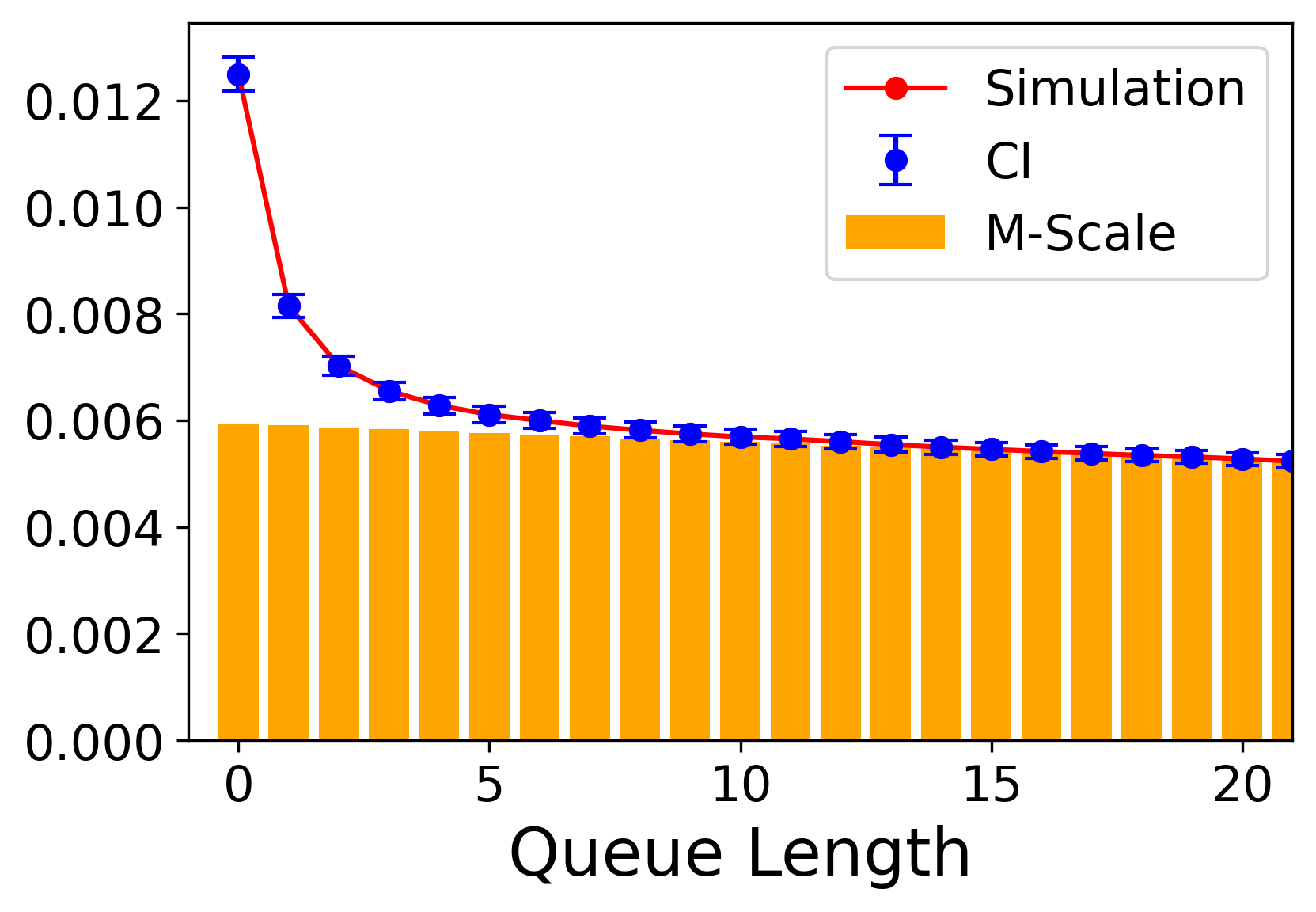}
			\subcaption{Head-Zoomed Histogram}\label{fig:plot2-z4}
		\end{minipage}%
		\hfill
		\begin{minipage}{0.3\textwidth}
			\centering
			\includegraphics[width=\textwidth]{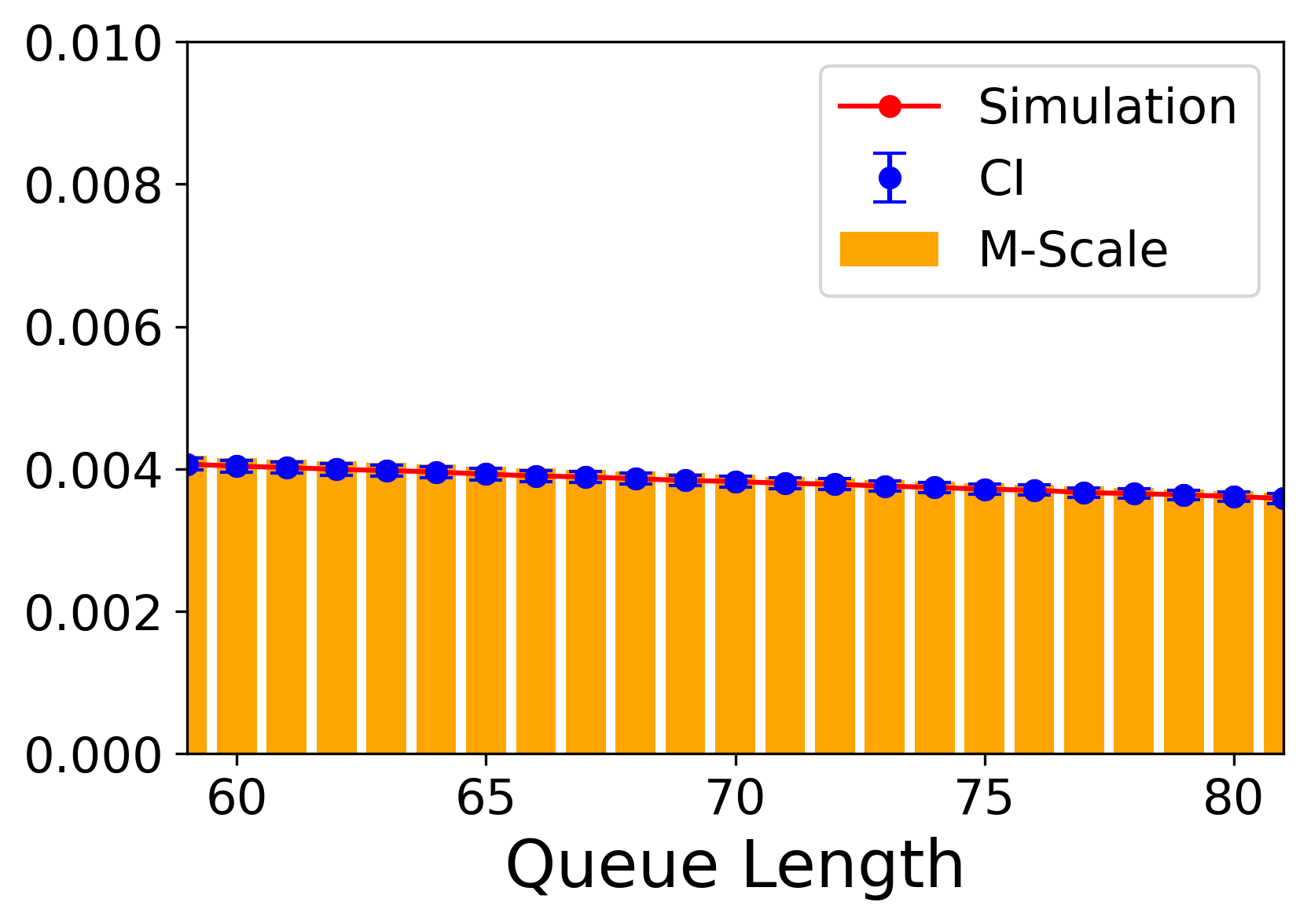}
			\subcaption{Tail-Zoomed Histogram}\label{fig:plot3-z4}
		\end{minipage}
		\caption{Histogram of $Z_4$}
		\label{fig:z4-plot}
	\end{figure}

	\subsection{Static Policy Optimization}

	Building on the closed-form formula, we extend its use beyond performance evaluation to evaluate all policies and perform static policy optimization, identifying the optimal static buffer priority policy. In this experiment, we focus on optimizing the average time-in-system, or cycle time, of jobs, which can be derived from queue lengths using Little’s law. By approximating the low-priority class queue lengths with our formula, and ignoring the high-priority queue lengths, we apply Little’s law to estimate the corresponding cycle times. The results of these approximations are shown in the ``M-Scale'' column of Table~\ref{tab:cycle-time}. The cycle times obtained from simulations, averaged over 2 billion observations, exhibit negligible confidence intervals.
	
	Based on the approximation, the SBP policies $\{(5,3,1),(4,2)\}$ corresponding to the last-buffer-first-serve policy, and $\{(3,5,1),(4,2)\}$ emerge as the best static buffer priority policies. This finding is subsequently confirmed by our simulation. 
	
	Additionally, we also observe that swapping the priority ranking of high-priority classes does not impact the limiting performance prediction. However, changing the priority ranking will affect performance in pre-limit systems. This discrepancy arises because high-priority queue lengths, which are assumed to be negligible in the limit, are nonzero in pre-limit systems.
	
	Nonetheless, our formula still offers a useful first step for performance evaluation and static policy optimization. Overall, these numerical experiments collectively demonstrate that our formula provides a solid foundation for performance evaluation. We leave the exploration of additional ways to leverage our theory for more refined performance evaluations to future research.
	
	\begin{table}[htbp]
		\centering
		\begin{tabular}{ccc}
			\hline  
			Priority    &  M-Scale &   Simulation\\\hline 
			$\{(5,3,1),(4,2)\}$    &    \blue{ 134.862 }&        $137.759\pm0.001$\\\hline  
			$\{(3,5,1),(4,2)\}$    &     134.862&        $137.117\pm0.001$ \\\hline  
			$\{(3,1,5),(4,2)\}$&     157.891&        $157.102\pm0.001$   \\\hline     
			$\{(1,3,5),(4,2)\}$   &     157.891  &        $167.010\pm0.001$ \\\hline  
			$\{(5,1,3),(4,2)\}$   &     180.920 &        $194.142\pm0.001$  \\\hline  
			$\{(1,5,3),(4,2)\}$   &     180.920 &        $195.931\pm0.001$\\\hline 
			$\{(5,3,1),(2,4)\}$ &     187.828 &        $185.250\pm0.001$ \\\hline  
			$\{(3,5,1),(2,4)\}$  &     187.828 &        $184.437\pm0.001$ \\\hline 
			$\{(3,1,5),(2,4)\}$  &     217.947 &        $216.077\pm0.001$\\\hline  
			$\{(1,3,5),(2,4)\}$  &     217.947 &        $217.676\pm0.001$\\\hline  
			$\{(5,1,3),(2,4)\}$  &     217.947 &        $226.220\pm0.001$\\\hline  
			$\{(1,5,3),(2,4)\}$  &     217.947 &        $226.364\pm0.001$ \\\hline  
			
		\end{tabular}
		\caption{Cycletime Approximation}
		\label{tab:cycle-time}
	\end{table}

	\section{Proof of Main Result}
	\label{sec:proof}
	
	In this section, we formally present the proof of our main result. 
	We commence by explaining the key components of our proof, which relies on the basic adjoint relationship (BAR) established for multiclass networks in \cite{BravDaiMiya2023}. To prevent redundancy, we skip the detailed derivation of BAR, and we refer interested readers to \cite{BravDaiMiya2023} for more details.

	\subsection{Moment Generating Functions for \texorpdfstring{$Z^{(r)}$}{Z(r)}}
	
	We begin by discussing moment generating functions (MGFs), which serve as a bridge between the BAR technique and the desired weak convergence result.
	
	For $\theta\in\R_-^K$, where $R_-^K=\{x\in\R^K:x_k\leq0, k=1,\ldots, K\}$, and we define the function $g_\theta(z)$ as
	\begin{equation}
		\label{eq:orig-g-test}
		g_\theta(z)=\exp\Big(\langle\theta, z\rangle\Big),\quad\forall z\in\R_+^K,
	\end{equation}
	where $\dotp{\theta,z}$ denotes the inner product and is computed as $\dotp{\theta,z}=\sum_{k=1}^K\theta_kz_k$, for $\theta,z\in\R^K$.
	
	We define the corresponding MGFs as
	\begin{equation}
		\label{eq:mgf-exp}
		\phi^{(r)}(\theta)=\E\Big[g_\theta(Z^{(r)})\Big],\quad\text{and}\quad 
		\phi_l^{(r)}(\theta)=\E\Big[g_\theta(Z^{(r)})\mid Z_{H(l)}^{(r)}=0\Big],\quad\forall l\in\calk,
	\end{equation}
	$H(k)$ denotes the set of classes at the same station as Class $k$ and with priorities at least as high as Class $k$. In the two-station-five-class re-entrant line example discussed in the introduction  Section~\ref{sec:intro}, setting $k=3$ gives $H(k)=\{3,5\}$.
	For fixed $\eta\in\R_-^K$, we define $\theta\in\R_-^K$ as
	$\theta\equiv\theta(\eta,r)=\big(r\eta_1,r^2\eta_2,\ldots,r^J\eta_J,r\eta_H).$
	When substituting this $\theta$ into \eqref{eq:mgf-exp}, we obtain
	\begin{equation*}
		\phi^{(r)}(\theta)=\E\Big[g_\theta(Z^{(r)})\Big]
		=\E\Big[\exp\Big(\sum_{l\in\call}\eta_lr^lZ_l^{(r)} + \sum_{l\in\calh}\eta_lrZ_l^{(r)}\Big)\Big],
	\end{equation*}
	which corresponds to the MGF of the appropriately scaled random vector 
	\begin{equation*}
		\big(rZ_1^{(r)},\ldots,r^JZ_J^{(r)},rZ_H^{(r)}\big).
	\end{equation*}
	\begin{remark}
		Similar to Theorem~\ref{thm:main}, we have abused the notation of $\eta_H$ to let it denote a vector of elements $\{\eta_k, k\in\calh\}$. For the remainder of this paper, we will continue to adopt this convention of notational simplification.
	\end{remark}

	Thus, the weak convergence in \eqref{eq:main-convergence} can be equivalently established by proving the following convergence in MGFs, for any $\eta=(\eta_1,\ldots, \eta_J, \eta_H^\top)^\top\in\R_-^K$,
	\begin{equation*}
		\lim_{r\to0}\phi^{(r)}(r\eta_1, r^2\eta_2,\ldots, r^J\eta_J, r\eta_H)=\prod_{k=1}^J\frac{1}{1-d_k\eta_k}.
	\end{equation*}
	
	Moreover, under Assumption~\ref{assumption:high-moment}, we can immediately obtain the following lemma, which gives a version of a transform state space collapse (SSC) result. The following lemma generalizes Lemma 7.8 from \cite{BravDaiMiya2023}. For completeness, we provide the full proof in Appendix~\ref{sec:ssc1-proof}.
	\begin{lemma}[Lemma 7.8 of \cite{BravDaiMiya2023}] 
		\label{lem:transform-ssc1}
		Under Assumption~\ref{assumption:high-moment}, for fixed $\eta= 
		(\eta_1,\ldots, \eta_J, \eta_H^\top)^\top\in\R_-^K$,
		\begin{align*}
			&\phi^{(r)}(r\eta_1,\ldots, r^J\eta_J, 0_H) - \phi^{(r)}(r\eta_1,\ldots, r^J\eta_J, r\eta_H) =o(1),\\
			&\phi_k^{(r)}(r\eta_1,\ldots, r^J\eta_J, 0_H) - \phi_k^{(r)}(r\eta_1,\ldots, r^J\eta_J, r\eta_H) =o(1),\quad\forall k\in\calk.
		\end{align*}
	\end{lemma}
	\begin{remark}
		By this SSC lemma, we note that $\eta_H$ is not in the heavy traffic limit. Hence, in the remainder of this paper, we will overload the notation $\eta$ and set $\eta=(\eta_1,\ldots, \eta_J)^\top\in\R_-^L$.
	\end{remark}
	
	Consequently, the lemma above enables us to focus on low-priority classes only. We prove our main results in Theorem~\ref{thm:main}, if we could show the following limit,
	\begin{equation*}
		\lim_{r\to0}\phi^{(r)}(r\eta_1, r^2\eta_2,\ldots, r^J\eta_J, 0_H)=\prod_{k=1}^J\frac{1}{1-d_k\eta_k}, \quad \eta = (\eta_1,\ldots, \eta_J)^\top\in\R_-^J.
	\end{equation*}
	
	\subsection{BAR}
	
	To prove the desired steady-state convergence, we adopt the basic adjoint relationship (BAR) technique, which is an alternative to the ``interchange of limits'' approach that has dominated the literature for the past two decades.
	The BAR approach directly analyzes the stationary distribution of the Markov process and uses the fundamental theorem of calculus to break down the steady-state balance equation into model primitives such as event time distributions.
	The BAR for MCNs with general inter-arrival and service time distributions is fully derived in \cite{BravDaiMiya2023}. To avoid repetition, we only provide a brief overview of BAR and omit the detailed derivation. We direct interested readers to~\cite{BravDaiMiya2023} for further details. 
	
	With inter-arrival and service times now generally distributed, we turn our attention to the Markov process $\{X(t),t\geq0\}$ as defined in~\eqref{eq:x-state}. Assuming that the process begins in its stationary distribution, the stationary distribution $\pi$ satisfies the following balance equation
	\begin{equation*}
		\E_\pi[f(X(1))-f(X(0))]=0
	\end{equation*}
	for function $f:\mathbb{S}\to\R$ satisfying the following conditions: for any $z\in\Z_+^J$, the function $f(z,\cdot,\cdot):\R_+^E\times\R_+^J\to\R$ is continuously differentiable at all but finitely many points and the derivatives for $f(z,\cdot,\cdot)$ in each dimension are uniformly bounded over $z$. By applying the fundamental theorem of calculus to expand $f(X(1))-f(X(0))$ and making use of the concept of the Palm probability measures, we obtain the BAR for the MCN~\citep[(6.16)]{BravDaiMiya2023}
	\begin{equation}
		\label{eq:bar-start}
		-\E_\pi[\cala f(X)] = \sum_{k\in\cale}\alpha_k\E_{e,k}[f(X_+^{(e,k)})-f(X_-^{(e,k)})]+\sum_{k\in\calk}\lambda_k\E_{s,k}[f(X_+^{(s,k)})-f(X_-^{(s,k)})].
	\end{equation}
	
	We break down the three terms in~\eqref{eq:bar-start}. In the first term, the operator $\cala$ is the ``interior operator'' defined as
	\begin{equation*}
		\cala f(x)=-\sum_{l\in\cale}\frac{\partial f}{\partial r_{e,l}}(x)-\sum_{l\in\calk}\frac{\partial f}{\partial r_{s,l}}(x)\mathbbm{1}\{z_l>0, z_{H_+(l)}=0\},\quad x=(z,r_e,r_s)\in\mathbb{S}.
	\end{equation*}
	In the absence of external arrival or service completion events, the first term in~\eqref{eq:bar-start} reflects how the residual times for external arrivals decrease, while the second term shows how the residual service times change at a rate of $-1$ when jobs are present at the class buffer and all buffers of higher priorities are empty, or remain unchanged otherwise. 
	Thus, the operator $\cala f$ in~\eqref{eq:bar-start} corresponds to the evolution of the MCN between jumps.
	
	The remaining terms describe changes in the state due to external arrivals and service completions, utilizing Palm probability measures. A Palm probability measure captures the conditional distribution of a point process, given that an event has occurred at a specified location; see \cite{BaccBrem2003} for further details. In the second term of~\eqref{eq:bar-start}, $(X_-^{(e,k)}, X_+^{(e,k)})\equiv((Z_-^{(e,k)}, R_{e,-}^{(e,k)}, R_{s,-}^{(e,k)}), (Z_+^{(e,k)}, R_{e,+}^{(e,k)}, R_{s,+}^{(e,k)}))$ represents the pre-jump and post-jump states conditioned on an external arrival at class $k$ occurring at time $0.$
	Given the occurrence of an external arrival at class $k$, $R_{e,-}^{(e,k)}$ decreases to $0$, $R_{e,+}^{(e;k)}$ is reset to a new independent inter-arrival time, and the queue length at class $k$ increases by one, i.e., $Z_{k,+}^{(e,k)}=Z_{k,-}^{(e,k)}+1$, with all other states unchanged.
	
	Similarly, the third term deals with service completions. $(X_-^{(s,k)}, X_+^{(s,k)})$ represents the pre-jump and post-jump states conditioned on service completion at class $k$ occurring at time $0.$ In this case, $R_{s,-}^{(s,k)}$ decreases to $0$, $R_{s,+}^{(s;k)}$ is reset to a new independent inter-arrival time, and the queue length at class $k$ decreases by one. The queue length might increase again at the same class or at one of the other classes, depending on routing. 
	
	The precise definitions of the Palm measures are provided in (6.8) and (6.9) of~\cite{BravDaiMiya2023}. We include the lemma~\citep[Lemma 6.3]{BravDaiMiya2023} that describes the properties of the pre-jump and post-jump states for completeness.
	
	\begin{lemma}[\cite{BravDaiMiya2023}]
		The pre-jump state and the post-jump state have the following representation,
		\begin{align*}
			&X_+^{(e,k)}=X_-^{(e,k)}+\Big(e^{(k)},e^{(k)}T_{e,k}/\alpha_k,0\Big)\quad\text{under }\Prob_{e,k}, k\in\cale\\
			\text{and}\quad&X_+^{(s,k)}=X_-^{(s,k)}+\Big(-e^{(k)}+\xi^{(k)},0,e^{(k)}T_{s,k}/\mu_k\Big)\quad\text{under }\Prob_{s,k}, k\in\calk,
		\end{align*} 
		where $\Prob_{e,k}$ and $\Prob_{s,k}$ denote the Palm measures corresponding to external arrivals and service completions respectively.
		
	\end{lemma}
	To simplify the notation, we omit the superscripts ``$^{(e,k)}$'' or ``$^{(s,k)}$'' from $X^{(e,k)}_-$ and $X^{(s,k)}_-$ for $k\in\calk$. Moreover, we use $\Delta$ to represent the increments corresponding to different jump events, i.e., external job arrival or service completion,
	\begin{align*}
		&\Delta_{e,k}\equiv\Big(e^{(k)},e^{(k)}T_{e,k}/\alpha_k,0\Big),\quad k\in\cale,\\
		\text{and}\quad&\Delta_{s,k}\equiv\Big(-e^{(k)}+\xi^{(k)},0,e^{(k)}T_{s,k}/\mu_k\Big),\quad k\in\calk.
	\end{align*} 
	Together, these random vectors satisfy the following BAR 
	\begin{equation}
		\label{eq:bar-general}
		-\E_\pi[\cala f(X)] = \sum_{k\in\cale}\alpha_k\E_{e,k}[f(X_-+\Delta_{e,k})-f(X_-)]+\sum_{k\in\calk}\lambda_k\E_{s,k}[f(X_-+\Delta_{s,k})-f(X_-)],
	\end{equation}
	where $\E_{e,k}[\cdot]$ and $\E_{s,k}[\cdot]$ are the respective Palm expectations.

	BAR can be used to analyze all steady-state properties of the system by selecting appropriate test functions to evaluate in~\eqref{eq:bar-general};  it is not limited to proving the desired product-form limit. For instance, consider the test function
	\begin{equation*}
		f(x)=r_s,\quad x=(z,r_e,r_s)\in\mathbb{S},
	\end{equation*}
	which allows us to obtain the following characterization of idle probabilities. 
	
	\begin{lemma}
		\label{lem:idle-prob}
		Under Assumption~\ref{assumption:stability}, we have
		\begin{equation*}
			\Prob(Z_{H(k)}^{(r)}=0)=\beta_k^{(r)},\quad k\in\calk,\quad r\in(0,1),
		\end{equation*}
		where $\beta_k^{(r)}$ is defined as
		\begin{equation*}
			\beta_k^{(r)} = 1-\sum_{l\in H(k)}\lambda_lm_l^{(r)}, \quad\forall k\in\calk.
		\end{equation*}
	\end{lemma}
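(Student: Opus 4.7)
The plan is to combine a structural consequence of preemptive SBP with a throughput-balance identity. Fix $k\in\calk$ and let $j=s(k)$. Because the service discipline is preemptive-resume SBP and non-idling, at every time $t$ the server at station $j$ is busy serving a job of some class $l\in H(k)$ if and only if $Z_{H(k)}^{(r)}(t)\neq 0$; and conversely, when $Z_{H(k)}^{(r)}(t)=0$ the server at $j$ is either idle or (if $k\notin\call$) serving a strictly lower-priority class, but in no case a class in $H(k)$. Writing $U_l^{(r)}$ for the stationary probability that the server at station $s(l)$ is currently processing a class $l$ job, we therefore obtain
\begin{equation*}
\Prob\bigl(Z_{H(k)}^{(r)}=0\bigr)\;=\;1-\sum_{l\in H(k)}U_l^{(r)}.
\end{equation*}

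The second step is to establish that $U_l^{(r)}=\lambda_l m_l^{(r)}$ for each $l\in\calk$. Let $B_l(t)$ denote the cumulative time that the server at $s(l)$ devotes to class $l$ jobs during $[0,t]$, and $D_l(t)$ denote the number of class $l$ service completions by time $t$. Under Assumption~\ref{assumption:stability} the Markov process $X^{(r)}(\cdot)$ is positive Harris recurrent, so by the SLLN for positive Harris recurrent processes, together with the traffic equation $\lambda=(I-P')^{-1}\alpha$ propagated along routes, the per-class throughput obeys $D_l(t)/t\to\lambda_l$ almost surely. Since each completed class $l$ service consumes an i.i.d.\ time $m_l^{(r)}T_{s,l}(i)$ with unit-mean factor $T_{s,l}(i)$, the decomposition $B_l(t)=\sum_{i=1}^{D_l(t)}m_l^{(r)}T_{s,l}(i)+O(1)$ and a second application of the SLLN yield $B_l(t)/t\to\lambda_l m_l^{(r)}$ almost surely. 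Birkhoff's ergodic theorem applied to the stationary version of $X^{(r)}(\cdot)$ then identifies this long-run time average with the stationary expectation of the indicator that the server at $s(l)$ is serving class $l$, giving $U_l^{(r)}=\lambda_l m_l^{(r)}$.

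Substituting this identity into the first display produces
\begin{equation*}
\Prob\bigl(Z_{H(k)}^{(r)}=0\bigr)\;=\;1-\sum_{l\in H(k)}\lambda_l m_l^{(r)}\;=\;\beta_k^{(r)},
\end{equation*}
which is the claim. The main obstacle is the rigorous justification of the throughput identity in Step 2: one needs to argue that positive Harris recurrence is enough to invoke the SLLN simultaneously for all the counting processes $D_l(\cdot)$, which typically follows from the regenerative structure of the piecewise-deterministic Markov process $X^{(r)}(\cdot)$ on an accessible petite set, together with the i.i.d.\ structure of the inter-arrival and service sequences. Once these ergodic facts are in place, the preemptive-priority conclusion is immediate, which is why the lemma is presented as a direct consequence of Assumption~\ref{assumption:stability}.
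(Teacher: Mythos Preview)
Your argument is correct in outline, and the paper does not supply its own proof either: it simply states that the lemma is a direct consequence of Lemma~6.4 of \cite{BravDaiMiya2023} and omits the details. So there is no ``paper's proof'' to compare against beyond that citation.

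That said, the route taken in \cite{BravDaiMiya2023} is genuinely different from yours. There the identity $\Prob(Z_l>0,\,Z_{H_+(l)}=0)=\lambda_l m_l^{(r)}$ is obtained algebraically from the basic adjoint relationship (BAR) by plugging the linear test function $f(x)=v_l$ (the remaining class-$l$ service time) into the stationary equation; summing over $l\in H(k)$ then gives the lemma directly. Your proof instead goes through pathwise ergodic theory: you identify the same quantity as a long-run busy-time fraction via $B_l(t)/t$, invoke the SLLN for i.i.d.\ service times and for the throughput processes $D_l(\cdot)$, and close with Birkhoff's theorem. The BAR route is shorter and entirely steady-state once the Palm-BAR machinery is in place; your route is more classical and self-contained but, as you yourself flag, requires care in justifying $D_l(t)/t\to\lambda_l$ from positive Harris recurrence alone (this is standard---flow balance at each class under stability---but it is the nontrivial step you have compressed into one sentence). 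Both approaches hinge on the same structural observation in your first paragraph, namely that under preemptive-resume SBP the event $\{Z_{H(k)}\neq 0\}$ coincides exactly with the server at $s(k)$ processing some class in $H(k)$.
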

	Lemma~\ref{lem:idle-prob} has been carefully proved in~\citet{BravDaiMiya2023}, and hence the proof is omitted.
	By Lemma~\ref{lem:idle-prob} and Assumption~\ref{assumption:multi-scale-heavy-traffic}, we notice that
	as $r\to0$,
	\begin{align*}
		&\Prob(Z_{H(k)}^{(r)}=0)=1-\rho_k^{(r)} = r^kb_k\to0,\quad\text{for }k\in\call,\\
		\text{and}\quad
		&\Prob(Z_{H(k)}^{(r)}=0)\to\beta_k>0,\quad\text{for }k\in\calh.
	\end{align*}

	\subsection{Exponential Test Functions with Truncation}
	
	When using exponential test functions in~\eqref{eq:bar-general}, we can recover moment generating functions, whose convergence implies weak convergence. 
	To prove the weak convergence to the product-form limit, it is important to design the $\theta$ in the exponential test functions~\eqref{eq:orig-g-test} as a function of the fixed $\eta=(\eta_1,\ldots,\eta_J)^\top\in\R_-^J$ and $r\in(0,1)$.
	Fix $\eta\in\R_-^J$ and an index $k\in\call$. 
	Let $\theta$ be a function of $\eta$ and $r$ based on the following rules, where $c_{ll'}\in\R$ are constant coefficients to be specified later for $l,l'\in\calk$,
	\begin{align}
		&\theta_l=r^l\eta_l,
		&&\text{for } l\in\call \text{ and } l>k\label{eq:theta-rule-1}\\
		&\theta_k=r^k\eta_k+\sum_{l'=k+1}^L c_{kl'}(r^{l'}\eta_{l'})\\
		&\theta_l=\sum_{l'=k}^L c_{ll'}(r^{l'}\eta_{l'}),
		&&\text{for remaining } l\in\calk. \label{eq:theta-rule-3} 
	\end{align}
	We note that $\theta$ as a function of $\eta$ and $r$ is of order $|\theta|=O(r^k)$.

	An inherent challenge in this construction arises from the fact that even though $\eta\in\R_-^L$, there is no assurance that $\theta_l\leq0$ for $l\in\call$ and $l\leq k$ or for $l\in\calh$. Hence, the simple exponential test function $g_\theta$ as defined in \eqref{eq:orig-g-test} may not always be bounded in $z\in\R_+^K$ as desired. 
	To overcome this potential unboundedness, we employ the truncation technique devised in~\cite{BravDaiMiya2017, BravDaiMiya2023}, and we consider the following test function shown in~\eqref{eq:g-test}. For fixed $\eta\in\R_-^L$ and $k\in\call$, and $\theta\equiv\theta(k,r,\eta)$ following~\eqref{eq:theta-rule-1}--\eqref{eq:theta-rule-3}, we define
	\begin{equation}
		\label{eq:g-test}
		\begin{aligned}
			g_{\theta,k,r}(z)=\exp\Big(&\left\langle\theta_{1:k-1},z_{1:k-1}\wedge r^{-k}\right\rangle\\
			&+ \Big(\sum_{l=k+1}^Lc_{kl}r^l\eta_{l}\Big)\Big( z_k\wedge r^{-(k+1)}\Big)  +\sum_{l=k}^Lr^l\eta_lz_l +\left\langle\theta_H, z_H\wedge r^{-1}\right\rangle\Big),
		\end{aligned}
	\end{equation}
	where $u\wedge v$ denotes element-wise minimum of vectors $u$ and $v$.
	
	To work with general inter-arrival and service time distributions, we adopt the technique developed in~\citet{BravDaiMiya2017, BravDaiMiya2023} and design the following test function built upon~\eqref{eq:g-test} to incorporate the additional time-related random variables.
	As inter-arrival and service times may not be bounded, we utilize the truncation technique 
	on the remaining inter-arrival times $u$ and remaining service times $v$. Hence, for fixed $\eta\in\R_-^J$ and $k\in\call$ and $\theta$ constructed as~\eqref{eq:theta-rule-1}--\eqref{eq:theta-rule-3}, the test function is defined as
	\begin{equation*}
		f_{\theta,k,r, t}(z, u, v)=g_{\theta,k,r}(z)\cdot\exp\Big(-\Big(\sum_{l\in\cale}\gamma_l(\theta,t)\Big(\alpha_l u_l\wedge t^{-1}\Big) + \sum_{l'\in\calk}\zeta_{l'}(\theta,t)\Big(\mu_{l'}v_{l'}\wedge t^{-1}\Big)\Big)\Big),
	\end{equation*}
	for $g_{\theta,k,r}(z)$ defined in~\eqref{eq:g-test}, $\varepsilon_0\in(0,\delta_0/(J+\delta_0)]$ and functions $\gamma(\theta,t)$ and $\zeta(\theta,t)$ that satisfy
	\begin{align}
		&e^{\theta_l}\E\Big(e^{-\gamma_l(\theta_l,t)(T_{e,l}\wedge t^{-1})}\Big)=1,\quad l\in\cale\label{eq:gamma-def}\\
		&\Big(\sum_{l'\in\bar\calk}P_{ll'}e^{-\theta_l+\theta_{l'}} \Big)\E\Big(e^{-\zeta_l(\theta,t)(T_{s,l}\wedge t^{-1})}\Big)=1,\quad l\in\calk\label{eq:xi-def}.
	\end{align}
	
	Properties of $\gamma_l(\theta_l,t)$ and $\zeta_l(\theta, t)$ functions are thoroughly discussed in \cite{BravDaiMiya2017, BravDaiMiya2023} and \cite{DaiGlynXu2023}. 
	Functions $\gamma_l(\theta_l,t)$ and $\zeta_l(\theta, t)$ are uniquely determined by equations \eqref{eq:gamma-def} and \eqref{eq:xi-def} respectively, which has been shown in \cite{BravDaiMiya2017, BravDaiMiya2023}. 
	For generic $\theta\equiv\theta(r)\in\R^K$ such that $|\theta(r)|\leq cr$, with $\theta$ defined in \eqref{eq:theta-rule-1}--\eqref{eq:theta-rule-3} as an example, $\gamma_l(\theta_l,t)$ and $\zeta_l(\theta,t)$ admits Taylor expansion when $r\to0$ and $t\to0$. These expansions are demonstrated in Lemma 7.5 of \cite{BravDaiMiya2023} and Lemma 5.3 of \cite{DaiGlynXu2023}. We include Lemma 5.3 of \cite{DaiGlynXu2023} below for completeness, but omit the proofs.
	
	To facilitate stating the Taylor expansion result, we define the following quantities. 
	\begin{align*}
		\Bar{\gamma}_l(\theta_l)&=\theta_l,\quad \Tilde{\gamma}_l(\theta_l)=\frac{1}{2}c_{e,l}^2\theta_l^2,\quad 
		\gamma^\ast_l(\theta_l)=\Bar{\gamma}_l(\theta_l)+\Tilde{\gamma}_l(\theta_l),\quad l\in\cale\\
		\Bar{\zeta}_l(\theta)&=-\theta_l+\sum_{l'\in\calk}P_{ll'}\theta_{l'},\quad l\in\calk\\
		\Tilde{\zeta}_l(\theta)&=\frac{1}{2}\left(\sum_{l'\in\calk}P_{ll'}\theta_{l'}^2-\left(\sum_{l'\in\calk}P_{ll'}\theta_{l'}\right)^2+c_{s,l}^2\left(-\theta_l+\sum_{l'\in\calk}P_{ll'}\theta_{l'}\right)^2\right),\quad l\in\calk\\
		\zeta^\ast_l(\theta)&=\Bar{\zeta}_l(\theta)+\tilde\zeta_l(\theta),\quad l\in\calk.
	\end{align*}
	
	\begin{lemma}[Lemma 5.3 of \cite{DaiGlynXu2023}]
		\label{lem:xi-gamma-taylor}
		Let $\theta(r) \in \R^K$ be given for each $r \in (0, 1)$, satisfying $|\theta(r)|\leq cr$. Denoting $\theta = \theta(r)$ and setting $t=r^{1-\varepsilon_0}$, we have
		\begin{align*}
			\gamma_l(\theta_l, r^{1-\varepsilon_0})&=\gamma_l^\ast(\theta_l) + o(r^J\theta_l) + o(|\theta_l|^2)\quad l\in\cale,\\
			\zeta_l(\theta, r^{1-\varepsilon_0})&=\zeta_l^\ast(\theta)+ o(r^J|\theta|) + o(|\theta|^2),\quad l\in\calk.
		\end{align*}
	\end{lemma}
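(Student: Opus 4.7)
The plan is to invert the implicit defining equations~\eqref{eq:gamma-def} and~\eqref{eq:xi-def} by cumulant expansion, treating $\gamma_l$ and $\xi_l$ as functions of $\theta$ that we pin down to second order while carefully controlling the truncation errors using the moment bound in Assumption~\ref{assumption:t-moment}. Set $Y^t_{e,l} = T_{e,l}\wedge(1/t)$ and $Y^t_{s,l} = T_{s,l}\wedge(1/t)$ for brevity. As a starting point I would establish a crude linear bound $\gamma_l(\theta_l,t) = O(\theta_l)$ and $\xi_l(\theta,t) = O(|\theta|)$ uniformly in small $t$, either from the implicit function theorem applied at $\theta=0$ or by differentiating~\eqref{eq:gamma-def}--\eqref{eq:xi-def} at $\theta = 0$ and noting that $\E[Y^t_{e,l}],\,\E[Y^t_{s,l}]$ both tend to $1$.

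Taking logarithms in~\eqref{eq:gamma-def} gives $\theta_l = -\log\E[\exp(-\gamma_l Y^t_{e,l})]$. I would expand the right-hand side as a cumulant series
\begin{equation*}
-\log\E[e^{-\gamma_l Y^t_{e,l}}]
= \gamma_l\,\E[Y^t_{e,l}] - \tfrac{1}{2}\gamma_l^2\,\Var(Y^t_{e,l}) + O(|\gamma_l|^3),
\end{equation*}
where the cubic remainder has a constant uniform in $t$ because $\E[(Y^t_{e,l})^3] \le \E[T_{e,l}^3]$ is finite by~\eqref{eq:t-moment}. Then I would control the truncation bias: combining $\E[(T_{e,l}-1/t)_+] \le t^{J+\delta_0}\E[T_{e,l}^{J+1+\delta_0}]/(J+\delta_0)$ with dominated convergence of $T_{e,l}^{J+1+\delta_0}\mathbbm 1\{T_{e,l}>1/t\}\to 0$ yields $\E[Y^t_{e,l}] = 1 + o(t^{J+\delta_0})$, which under $t=r^{1-\varepsilon_0}$ and $\varepsilon_0 \le \delta_0/(J+\delta_0)$ becomes $o(r^J)$; analogously $\Var(Y^t_{e,l}) = c_{e,l}^2 + o(1)$ by dominated convergence with $T_{e,l}^2$ as dominating function. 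Substituting these into the cumulant expansion and solving for $\gamma_l$ by one round of fixed-point iteration (using the crude bound $\gamma_l=O(\theta_l)$ to absorb the higher-order terms) yields $\gamma_l = \theta_l + \tfrac{1}{2}c_{e,l}^2\theta_l^2 + o(\theta_l^2) + o(r^J\theta_l) = \gamma_l^\ast(\theta_l) + o(\theta_l^2) + o(r^J\theta_l)$.

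For $\xi_l$ the argument is parallel. Writing $\theta_0 = 0$ so that $\sum_{l'\in\bar\calk}P_{ll'}\theta_{l'} = \sum_{l'\in\calk}P_{ll'}\theta_{l'}$, the equation~\eqref{eq:xi-def} becomes $-\log\E[\exp(-\xi_l Y^t_{s,l})] = \log\big(\sum_{l'\in\bar\calk}P_{ll'}e^{-\theta_l+\theta_{l'}}\big)$. The right-hand side is the cumulant generating function of a discrete random variable with law $\{P_{ll'}\}$ evaluated at the vector $(-\theta_l+\theta_{l'})$, and expands to $\bar\xi_l(\theta) + \tfrac{1}{2}\big(\sum_{l'}P_{ll'}\theta_{l'}^2 - (\sum_{l'}P_{ll'}\theta_{l'})^2\big) + O(|\theta|^3)$. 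Matching with the cumulant expansion of the left side, and plugging the leading-order solution $\xi_l = \bar\xi_l(\theta) + O(|\theta|^2)$ back into the quadratic term to generate the extra piece $\tfrac{1}{2}c_{s,l}^2\,\bar\xi_l(\theta)^2$, gives $\xi_l = \bar\xi_l(\theta) + \tilde\xi_l(\theta) + o(|\theta|^2) + o(r^J|\theta|)$, as asserted.

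The main obstacle is obtaining the error rate $o(r^J)$ rather than merely $O(r^J)$ at the boundary $\varepsilon_0 = \delta_0/(J+\delta_0)$: a naive Markov bound on $\E[(T-1/t)_+]$ only gives $O(r^J)$, so the sharpening requires dominated convergence of $\E[T^{J+1+\delta_0}\mathbbm 1\{T>1/t\}] \to 0$. This is precisely where the ``$J+1+\delta_0$'' moment in Assumption~\ref{assumption:t-moment} is used to produce strict little-o. Once the truncation tails are controlled in this sharpened form, the remainder of the argument is a routine two-term cumulant inversion.
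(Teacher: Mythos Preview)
The paper does not give its own proof of this lemma; it quotes the statement from \cite{DaiGlynXu2023} (Lemma~5.3 there) and explicitly writes ``we omit the proofs,'' referring the reader to \cite{BravDaiMiya2017,BravDaiMiya2023,DaiGlynXu2023} for the details. Your proposal is a correct cumulant-inversion argument and is, in spirit, the same method those references use: implicitly invert~\eqref{eq:gamma-def}--\eqref{eq:xi-def} by expanding $-\log\E[e^{-\gamma Y^t}]$ to second order, control the truncation bias $1-\E[Y^t]=\E[(T-1/t)_+]$ via the moment assumption~\eqref{eq:t-moment}, and iterate once to pick up the quadratic term. Your identification of the boundary case $\varepsilon_0=\delta_0/(J+\delta_0)$, where the crude Markov bound only gives $O(r^J)$ and one needs dominated convergence of $T^{J+1+\delta_0}\mathbbm1\{T>1/t\}$ to upgrade to $o(r^J)$, is exactly the point. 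There is nothing to correct.
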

	
	\begin{remark}
		The above lemma works for general $\theta$ as a function of $r$. As an example, when substituting our specially constructed $\theta$ following \eqref{eq:theta-rule-1}--\eqref{eq:theta-rule-3} into the above lemma, we have
		\begin{equation*}
			\gamma_l({\theta}_l, r^{1-\varepsilon_0})=\gamma_l^\ast({\theta}_l) + o(|\theta_l|^2)\quad l\in\cale,\quad\text{and}\quad
			\zeta_l({\theta}, r^{1-\varepsilon_0})=\zeta_l^\ast({\theta}) + o(|\theta|^2)\quad l\in\calk,
		\end{equation*}
		for $|\theta|=O(r^k)$, $k\leq J$, and hence $o(|\theta|^2)$ is the dominating term. 
	\end{remark}
	
	We next define the MGFs
	\begin{equation*}
		\psi^{(r)}(\theta)=\E\Big[f_{\theta,k,r,r^{1-\varepsilon_0}}(X^{(r)})\Big],\quad\text{and}\quad 
		\psi_l^{(r)}(\theta)=\E\Big[f_{\theta,k,r,r^{1-\varepsilon_0}}(X^{(r)})\mid Z_{H(l)}^{(r)}=0\Big],\quad\forall l\in\calk.
	\end{equation*}
	
	Together with the Taylor expansion results in Lemma~\ref{lem:xi-gamma-taylor}, we have the following asymptotic BAR, which is a generalization of (7.7) from \cite{BravDaiMiya2023}.
	\begin{proposition}
		\label{prop:asymptotic-bar}
		Assume the assumptions in Theorem~\ref{thm:main}. Fix $\eta\in\R_-^L$ and index $k\in\call$. For $\theta\equiv\theta(k,r,\eta)$ defined in \eqref{eq:theta-rule-1}--\eqref{eq:theta-rule-3}, as $r\to0$, we have
		\begin{equation}
			\label{eq:bar-taylor}
			\begin{aligned}
				q^\ast(\theta)\psi^{(r)}(\theta)&-\sum_{l\in\call}\beta_l^{(r)}\mu_l^{(r)}\zeta_l^\ast(\theta)\Big(\psi_l^{(r)}(\theta)-\psi^{(r)}(\theta)\Big)\\
				&+\sum_{l\in\calh}\beta_l^{(r)}\Big(\mu_{l-}^{(r)}\zeta_{l-}^\ast(\theta) - \mu_l^{(r)}\zeta_l^\ast(\theta)\Big)\Big(\psi_l^{(r)}(\theta)-\psi^{(r)}(\theta)\Big) =  o(r^J|\theta|)+ o(|\theta|^2),
			\end{aligned}
		\end{equation}
		where $q^\ast(\theta)=\sum_{l\in\cale}\alpha_l\gamma_l^\ast(\theta_l)+\sum_{l\in\calk}\lambda_l\zeta_l^\ast(\theta)$ as previously defined in~\eqref{eq:q-star-def}.
	\end{proposition}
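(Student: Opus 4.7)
The plan is to derive \eqref{eq:bar-taylor} as a Taylor-expanded form of the exact Palm-version BAR established in \cite{BravDaiMiya2023}. Applied to the truncated test function $f_{\theta,k,r,r^{1-\varepsilon_0}}$, the exact BAR is an identity of the same structural shape as \eqref{eq:bar-taylor} but with the unexpanded $\gamma_l(\theta,r^{1-\varepsilon_0})$ and $\xi_l(\theta,r^{1-\varepsilon_0})$ in place of $\gamma_l^\ast(\theta)$ and $\xi_l^\ast(\theta)$, and with an additional residual stemming from the truncation of the state coordinates in $g_{\theta,k,r}$ and of $u,v$ at level $r^{\varepsilon_0-1}$.

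\textbf{Taylor step.} The first substantive move is to invoke Lemma~\ref{lem:xi-gamma-taylor}, substituting $\gamma_l(\theta,r^{1-\varepsilon_0})=\gamma_l^\ast(\theta)+o(r^J|\theta|)+o(|\theta|^2)$ and $\xi_l(\theta,r^{1-\varepsilon_0})=\xi_l^\ast(\theta)+o(r^J|\theta|)+o(|\theta|^2)$ wherever they appear; the lemma applies because both $\theta$ and $\tilde\theta$ satisfy $|\theta|\leq cr$ (in fact $|\theta|=O(r^k)$). Because $f_{\theta,k,r,r^{1-\varepsilon_0}}$ is bounded by $1$ by construction of the truncated test function, one has $|\psi^{(r)}(\theta)|,|\psi_l^{(r)}(\theta)|\leq 1$, and the coefficients $\alpha_l,\lambda_l,\beta_l^{(r)}\mu_l^{(r)}$ are uniformly bounded in $r$. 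Consequently each Taylor remainder contributes at most $o(r^J|\theta|)+o(|\theta|^2)$ after multiplication. Grouping the starred terms produces $q^\ast(\theta)\psi^{(r)}(\theta)$ on the bulk side and the advertised $\xi_l^\ast$-weighted boundary expressions involving $\psi_l^{(r)}-\psi^{(r)}$.

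\textbf{Truncation errors.} The principal burden is bounding the residual, which has two sources. The truncation of $u,v$ at level $r^{\varepsilon_0-1}$ contributes terms of order $\E[(T_{e,l})^{J+1+\delta_0}]\,r^{(J+\delta_0)(1-\varepsilon_0)}\,|\theta|$ and analogous service-time pieces; under Assumption~\ref{assumption:t-moment} and the choice $\varepsilon_0\in(0,\delta_0/(J+\delta_0)]$ these are $o(r^J|\theta|)$. The truncation of the state coordinates in $g_{\theta,k,r}$ at the mixed levels $1/r^k,1/r^{k+1},1/r$ is absorbed using Assumption~\ref{assumption:low-moment} for low-priority classes and Assumption~\ref{assumption:high-moment} for high-priority classes: each excess-mass expectation $\E[(r^kZ_k^{(r)})^m\mathbbm1\{Z_k^{(r)}>1/r^k\}]$, and analogously $\E[(Z_k^{(r)})^m\mathbbm1\{Z_k^{(r)}>1/r\}]$, vanishes as $r\to 0$, and the $J+\epsilon_0$ moment budget in those assumptions is precisely calibrated to accommodate the multiplicative factor $|\theta|$ times the reciprocal of the truncation threshold.

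\textbf{Main obstacle.} The central difficulty is the bookkeeping of orders across the mixed truncation thresholds in $g_{\theta,k,r}$ and the block structure of $\theta$ prescribed by \eqref{eq:theta-rule-1}--\eqref{eq:theta-rule-3}: each block of $z$ is truncated at a different rate matched to the corresponding order of $\theta_l$, so verifying that every excess-mass contribution is $o(r^J|\theta|)+o(|\theta|^2)$ requires a componentwise accounting rather than a uniform bound. Once this is completed, the entire derivation applies verbatim to $\tilde\theta$, whose only difference from $\theta$ is the exponent of $r$ on its $k$-th coordinate; since Lemma~\ref{lem:xi-gamma-taylor} holds for any $\theta(r)$ with $|\theta(r)|\leq cr$, neither the Taylor expansion nor the truncation analysis needs modification.
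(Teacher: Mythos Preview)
Your proposal is correct and follows essentially the same approach as the paper, which simply states that Proposition~\ref{prop:asymptotic-bar} is a generalization of (7.7) in \cite{BravDaiMiya2023} obtained by substituting the Taylor expansions of Lemma~\ref{lem:xi-gamma-taylor}; the paper does not spell out the truncation bookkeeping that you outline. One small correction: the test function $f_{\theta,k,r,r^{1-\varepsilon_0}}$ is not bounded by $1$ but by a constant of the form $e^{|C(k)||\eta|}$ (cf.\ the bound \eqref{eq:bounded-comp} in the appendix), since the truncated exponent can be positive; this does not affect your argument, as uniform boundedness in $r$ is all that is needed.
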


	Moreover, by setting up $\theta\equiv\theta(k,r,\eta)$ following \eqref{eq:theta-rule-1}--\eqref{eq:theta-rule-3}, we have the following state space collapse (SSC) results that relate $\psi^{(r)}$ to $\phi^{(r)}$. Please refer to Appendix~\ref{sec:ssc-proof} for the detailed proof.
	
	\begin{lemma}
		\label{lem:phi-ssc}
		Fix $\eta\in\R_-^J$ and $k\in\call$. For $\theta\equiv\theta(k,r,\eta)$ following \eqref{eq:theta-rule-1}--\eqref{eq:theta-rule-3}, we have
		\begin{align}
			&\phi^{(r)}(0_{1:k-1},r^k\eta_k, r^{k+1}\eta_{k+1},\ldots, r^J\eta_J,0_H)-\psi^{(r)}(\theta)=o(1)\label{eq:ssc-1}\\
			&\phi_k^{(r)}(0_{1:k}, r^{k+1}\eta_{k+1},\ldots, r^J\eta_J,0_H)-\psi_k^{(r)}(\theta)=o(1)\label{eq:ssc-2}.
		\end{align}
		For $l\in\calh$, we have
		\begin{equation}
			\phi_{l}^{(r)}(0_{1:k-1},r^k\eta_k, r^{k+1}\eta_{k+1},\ldots, r^J\eta_J,0_H)-\psi_{l}^{(r)}(\theta)=o(1)\label{eq:ssc-high-1}.
		\end{equation}
	\end{lemma}

	\subsection{Proof of Theorem~\ref{thm:main}}
	\label{sec:outline}
	
	Finally, we are ready to prove our main result Theorem~\ref{thm:main}.
	Proving this theorem relies on the analysis of BAR in~\cite{BravDaiMiya2023} and a smart application of the multi-scale heavy traffic assumption.
	
	We first present the following lemma, a simple extension of (7.30) of Lemma 7.9 in \cite{BravDaiMiya2023} to this multi-scale heavy traffic setup. 
	
	\begin{lemma}
		\label{lem:mgf-high}
		Fix $\eta\in\R_-^L$ and $k\in\call$, for every $l\in\calh$, we have
		\begin{equation*}
			\phi_{l}^{(r)}(0_{1:k-1},r^k\eta_k, r^{k+1}\eta_{k+1},\ldots, r^J\eta_J,0_H)-\phi^{(r)}(0_{1:k-1},r^k\eta_k, r^{k+1}\eta_{k+1},\ldots, r^J\eta_J,0_H)=o(1).
		\end{equation*}
		
	\end{lemma}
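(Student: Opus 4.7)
The approach is to convert the identity into a statement about the truncated MGFs $\psi^{(r)}$ via Lemma~\ref{lem:phi-ssc}, and then apply the asymptotic BAR of Proposition~\ref{prop:asymptotic-bar} with a carefully engineered exponent vector. Write $\bar\theta = (0_{1:k-1},r^k\eta_k,r^{k+1}\eta_{k+1},\ldots,r^L\eta_L,0_H)$ for the test point in the statement, and let $\theta^\ast$ denote a choice from the admissible family \eqref{eq:theta-rule-1}--\eqref{eq:theta-rule-3} whose exponent vector reduces to $\bar\theta$. Equations \eqref{eq:ssc-1} and \eqref{eq:ssc-high-1} of Lemma~\ref{lem:phi-ssc} immediately give
\begin{equation*}
\phi^{(r)}(\bar\theta) - \psi^{(r)}(\theta^\ast) = o(1) \quad \text{and} \quad \phi_l^{(r)}(\bar\theta) - \psi_l^{(r)}(\theta^\ast) = o(1),
\end{equation*}
so the lemma reduces to proving $\psi_l^{(r)}(\theta^\ast) - \psi^{(r)}(\theta^\ast) = o(1)$.

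Next, I exploit the freedom in the coefficients $\{c_{l',l''}\}$ within \eqref{eq:theta-rule-1}--\eqref{eq:theta-rule-3}. Using the invertibility of $A_H$ in Assumption~\ref{assumption:ah-invertible}, I pick these coefficients so that
\begin{equation*}
\mu_{l'-}^{(r)}\xi_{l'-}^\ast(\theta^\ast) - \mu_{l'}^{(r)}\xi_{l'}^\ast(\theta^\ast) = 0 \qquad \text{for every } l' \in \calh\setminus\{l\},
\end{equation*}
while keeping the corresponding combination for $l'=l$ of exact order $\Theta(|\theta^\ast|)$. This is a linear system in the $\calh$-block of $\theta^\ast$ whose coefficient matrix has full row rank by $A_H$-invertibility, so a solution exists. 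Substituting this $\theta^\ast$ into the asymptotic BAR \eqref{eq:bar-taylor} collapses the $\calh$-sum to the single term indexed by $l$.

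A scaling analysis then closes the argument. Since $|\theta^\ast| = \Theta(r^k)$, one has $q^\ast(\theta^\ast)\psi^{(r)}(\theta^\ast) = O(r^{2k})$; each $\call$-sum summand is bounded by $\beta_{l'}^{(r)}\mu_{l'}^{(r)}\xi_{l'}^\ast(\theta^\ast) \cdot |\psi_{l'}^{(r)} - \psi^{(r)}| = O(r^{l'+k})$; and the RHS error is $o(r^{J+k}) + o(r^{2k})$. The surviving $\calh$-contribution equals
\begin{equation*}
\beta_l^{(r)}\bigl(\mu_{l-}^{(r)}\xi_{l-}^\ast(\theta^\ast) - \mu_l^{(r)}\xi_l^\ast(\theta^\ast)\bigr)\bigl(\psi_l^{(r)}(\theta^\ast) - \psi^{(r)}(\theta^\ast)\bigr),
\end{equation*}
with prefactor of exact order $\Theta(r^k)$ (using $\beta_l^{(r)} \to \beta_l > 0$ for $l \in \calh$). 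Dividing the identity by $r^k$ yields $\psi_l^{(r)}(\theta^\ast) - \psi^{(r)}(\theta^\ast) = O(r^k) + O(r) = o(1)$, which combined with the SSC above gives $\phi_l^{(r)}(\bar\theta) - \phi^{(r)}(\bar\theta) = o(1)$.

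The main obstacle I foresee is confirming that the engineered $\theta^\ast$ lies in the admissible class \eqref{eq:theta-rule-1}--\eqref{eq:theta-rule-3} with $|\theta^\ast|$ of the correct order $\Theta(r^k)$, and that the coefficient in front of $\bigl(\psi_l^{(r)} - \psi^{(r)}\bigr)$ does not accidentally drop below order $|\theta^\ast|$. The first point is a dimension count of the available $c_{l',l''}$ parameters; the second requires a genericity argument, which should follow by combining the $A_H$-invertibility of Assumption~\ref{assumption:ah-invertible} with the $P$-matrix hypothesis on $I-Q$ of Assumption~\ref{assumption:q-m-matrix}.
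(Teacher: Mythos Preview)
Your proposal is correct and follows essentially the same approach as the paper's proof. The paper dispatches your two stated obstacles with the explicit choice $\theta_H = -A_H^{-\prime}A_{LH}'\theta_L + A_H^{-\prime}e_H^{(l)}r^k$, which visibly lies in the admissible family \eqref{eq:theta-rule-1}--\eqref{eq:theta-rule-3} with $|\theta|=O(r^k)$ and forces the surviving prefactor $\mu_{l-}\bar\xi_{l-}(\theta)-\mu_l\bar\xi_l(\theta)$ to equal exactly $r^k$, so no genericity or dimension-count argument is needed.
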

	
	This lemma intuitively makes sense. In heavy traffic, high-priority queues tend to have relatively small queue lengths. Conditioning on $Z_{H(l)}=0$ for $l\in\calh$, i.e., conditioning on no queue for high-priority classes will have a negligible impact on the limit. Subsequently, the conditional MGF is approximately the same as the unconditional MGF.
	The detailed proof of this lemma is included in Appendix~\ref{sec:mgf-high-proof}.

	Making use of Lemma~\ref{lem:idle-prob} and Lemma~\ref{lem:mgf-high}, we further simplify the asymptotic BAR in \eqref{eq:bar-taylor} to the following, for fixed $k\in\call$, and $\theta$ following~\eqref{eq:theta-rule-1}--\eqref{eq:theta-rule-3},
	\begin{equation}
		\label{eq:theta-bar-sim}
		\begin{aligned}
			\Tilde{q}(\theta)\psi^{(r)}(\theta)&-\sum_{l\in\call}\beta_l^{(r)}\mu_l^{(r)}\Bar{\zeta}_l(\theta)\Big(\psi_l^{(r)}(\theta)-\psi^{(r)}(\theta)\Big)\\
			&+\sum_{l\in\calh}\beta_l^{(r)}\Big(\mu_{l-}^{(r)}\bar{\zeta}_{l-}(\theta) - \mu_l^{(r)}\Bar{\zeta}_l(\theta)\Big)\Big(\psi_l^{(r)}(\theta)-\psi^{(r)}(\theta)\Big) 
			=o(r^J|\theta|)+ o(|\theta|^2).
		\end{aligned}
	\end{equation}
	
	Note that we have effectively simplified the coefficients of $\psi_l^{(r)}(\theta)-\psi^{(r)}(\theta)$ from the sum of the first two orders of terms in Taylor expansion in~\eqref{eq:bar-taylor} to only the first-order terms in~\eqref{eq:theta-bar-sim}. 
	
	Next, we make use of the previously designed structure of $\theta$ and carefully set the coefficients $c_{ll'}$ in~\eqref{eq:theta-rule-1}--\eqref{eq:theta-rule-3}, so as to make the best use of the multi-scale heavy traffic assumption.
	
	Fix $\eta=(\eta_1,\ldots,\eta_L)^\top\in\R_-^L$. For \textbf{each} $k\in\call$, we choose $\theta\equiv\theta(\eta,k,r)$ such that
	\begin{align}
		&\theta_{l}=w_{lk}\theta_k+\sum_{l'=k+1}^Lw_{ll'}\theta_{l'}, && \forall\,l<k, l\in\call\label{eq:theta-def-first}\\
		&\theta_k=r^k\eta_k+\sum_{l'=k+1}^Lw_{kl'}\theta_{l'}\\
		&\theta_{l}=r^l\eta_l,&&\forall\, l>k, l\in\call\\
		&\theta_H=-A_H^{-\top}A_{LH}^\top\theta_L,\label{eq:theta-def-last}
	\end{align}
	where $w_{ij}$ has been defined in \eqref{eq:w-def-1} and \eqref{eq:w-def-2}.

	Clearly, the special construction of $\theta$ satisfies the general rule in \eqref{eq:theta-rule-1}--\eqref{eq:theta-rule-3}.
	Given this choice of coefficients, we do not have much guarantee for the value of $w_{ll'}$ under Assumption~\ref{assumption:q-m-matrix}, except that $w_{kk}<1$. Therefore, given the fixed index $k$, for $l\in\call$ and $l<k$, 
	when $w_{lk}<0$, we may have $\theta_l>0$. 
	This explains the rationale behind the construction of test function $g$ in~\eqref{eq:g-test} with truncation on the low-priority classes.
	
	Nonetheless, all of the SSC results presented for general $\theta$ in Lemma~\ref{lem:phi-ssc} are applicable to the specially designed $\theta$ in \eqref{eq:theta-def-first}--\eqref{eq:theta-def-last}.
	Moreover, there are the additional properties brought along by the specially chosen coefficients $c_{ll'}$ of $\theta$ and we list them below.
	
	\begin{lemma}
		\label{lem:w-solution}
		Fix $k\in\call$. Under Assumption~\ref{assumption:q-m-matrix}, the following equations
		\begin{equation*}
			w_{ij}=Q_{ij} + \sum_{l<k}Q_{il}w_{lj},\quad i,j\in\call
		\end{equation*}
		has a unique solution $(w_{1k},\ldots, w_{Lk})$ that is given by \eqref{eq:w-def-1} and \eqref{eq:w-def-2}.
	\end{lemma}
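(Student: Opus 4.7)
The plan is to observe that, for each fixed second index $j$, the system in the lemma decouples from all other columns of $w$, because the sum on the right-hand side only involves entries $w_{lj}$ sharing that same second index. Therefore it suffices to specialize to $j=k$ and solve the $L$ equations for the single column $(w_{1k},\ldots,w_{Lk})$. The key structural observation is that the sum is only over $l<k$, so the equations split naturally into the two blocks $i<k$ and $i\geq k$, with the second block being fully determined once the first has been solved.

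For $i\in\{1,\ldots,k-1\}$, rearranging yields
\[
w_{ik}-\sum_{l<k}Q_{il}w_{lk}=Q_{ik},
\]
which in matrix form reads $(I-Q_{1:k-1,1:k-1})\,w_{1:k-1,k}=Q_{1:k-1,k}$. By Assumption~\ref{assumption:q-m-matrix}, $I-Q$ is a $P$-matrix, and every principal submatrix of a $P$-matrix is again a $P$-matrix and therefore invertible; applied to the $(k-1)\times(k-1)$ leading principal submatrix $I-Q_{1:k-1,1:k-1}$, this yields the unique solution
\[
w_{1:k-1,k}=(I-Q_{1:k-1,1:k-1})^{-1}Q_{1:k-1,k},
\]
which is exactly \eqref{eq:w-def-1}.

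For the remaining indices $i\in\{k,\ldots,L\}$, since the sum still only reaches $l<k$, the equation reduces to
\[
w_{ik}=Q_{ik}+Q_{i,1:k-1}\,w_{1:k-1,k},
\]
so $w_{k:L,k}$ is uniquely determined by the $w_{1:k-1,k}$ obtained above. Substituting the formula from the first block reproduces exactly \eqref{eq:w-def-2}. Combining the two blocks establishes both existence and uniqueness of the column solution in the claimed form.

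The only nontrivial step is the invertibility of $I-Q_{1:k-1,1:k-1}$, but this is an immediate consequence of the $P$-matrix property invoked in the paragraph preceding Assumption~\ref{assumption:q-m-matrix}, so no further structural ingredient is required and the remainder is routine linear algebra.
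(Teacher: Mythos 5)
Your proof is correct and follows essentially the same route as the paper's: split the column system at index $k$, solve the $1{:}k{-}1$ block by inverting the leading principal submatrix of the $P$-matrix $I-Q$, and back-substitute to obtain the $k{:}L$ block. Your added remarks on column decoupling and the $P$-matrix invertibility step are accurate but do not change the argument in any substantive way.
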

	
	\begin{lemma}[Solution to a linear system]
		\label{lem:theta-solution}
		Fix $\eta\in\R_-^L$ and $k\in\call$. $\theta$ defined in \eqref{eq:theta-def-first}--\eqref{eq:theta-def-last} corresponds to the (unique) solution to the following system of equations,
		\begin{align*}
			&\Bar{\zeta}^{(r)}_l(\theta)=0,
			&&\forall\, l<k, l\in\call\\
			&\Bar{\zeta}^{(r)}_k(\theta)=-(1-w_{kk})\cdot r^k\eta_k,\\
			&\mu_{l-}^{(r)}\Bar{\zeta}_{l-}(\theta)-\mu_l^{(r)}\Bar{\zeta}_l(\theta)=0,
			&&\forall\, l\in\calh.
		\end{align*}
	\end{lemma}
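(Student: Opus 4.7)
The plan is to decouple the system via the matrix $A=(I-P')\diag(\mu)(I-B)$, splitting it into a high-priority block that determines $\theta_H$ from $\theta_L$ and a Schur-complement system in $\theta_L$ alone. The starting point is the identity
\[
(A'\theta)_l=\begin{cases}-\mu_l\bar\xi_l(\theta), & l\in\call,\\ \mu_{l-}\bar\xi_{l-}(\theta)-\mu_l\bar\xi_l(\theta), & l\in\calh,\end{cases}
\]
which follows from expanding $(I-B')\diag(\mu)(I-P)\theta$ and using $(B'x)_l=x_{l-}$ when $l-$ exists and $0$ otherwise; the same holds with $\mu$ replaced by $\mu^{(r)}$. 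The lemma's system then rewrites, using the $r$-dependent $A^{(r)}$, as $(A^{(r)\prime}\theta)_l=0$ for $l\in\calh\cup\{j\in\call:j<k\}$ and $(A^{(r)\prime}\theta)_k=\mu_k^{(r)}(1-w_{kk})r^k\eta_k$, together with the fixed boundary $\theta_l=r^l\eta_l$ for $l\in\call$, $l>k$.

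Next I would peel off $\theta_H$. The equations for $l\in\calh$ are $A_{LH}^{(r)\prime}\theta_L+A_H^{(r)\prime}\theta_H=0$, and Assumption~\ref{assumption:ah-invertible} makes $A_H^{(r)}$ invertible for small $r$, so $\theta_H=-A_H^{(r)-\prime}A_{LH}^{(r)\prime}\theta_L$, matching \eqref{eq:theta-def-last}. Substituting back into the low-priority equations and using the Schur-complement identity $R'=\diag(\mu_L)(I-Q)$ pointed out in the excerpt (and its $r$-analogue), the remaining system reduces to
\[
[(I-Q^{(r)})\theta_L]_l=0\ \text{for }l<k,\qquad [(I-Q^{(r)})\theta_L]_k=(1-w_{kk})r^k\eta_k,
\]
together with $\theta_l=r^l\eta_l$ for $l>k$. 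Regarded as a $k\times k$ linear system in $(\theta_1,\ldots,\theta_k)$ whose coefficient matrix is $(I-Q^{(r)})_{1:k,1:k}$, this is uniquely solvable because Assumption~\ref{assumption:q-m-matrix} makes $I-Q$ a $P$-matrix --- so every leading principal submatrix is invertible --- and the property persists under $\mu\mapsto\mu^{(r)}$ for small $r$; existence and uniqueness of $\theta$ (and analogously $\tilde\theta$) follow at once.

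To identify this unique solution with \eqref{eq:theta-def-first}--\eqref{eq:theta-def-last}, I would perform the block inversion of $(I-Q)_{1:k,1:k}$ by singling out the $k$-th row and column. The Schur complement at the $(k,k)$ position is $1-w_{kk}$ by \eqref{eq:w-def-2}, so $[(I-Q)_{1:k,1:k}^{-1}]_{kk}=(1-w_{kk})^{-1}$ (as already noted in the excerpt), and the $(l,k)$-entries of the inverse for $l<k$ are $w_{lk}/(1-w_{kk})$ by \eqref{eq:w-def-1}. Matching the coefficient of $r^k\eta_k$ on both sides of the reduced system recovers $\theta_k=r^k\eta_k+(\text{terms in }\theta_{l'},\,l'>k)$ and $\theta_l=w_{lk}\theta_k+(\text{terms in }\theta_{l'})$ for $l<k$. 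Matching the coefficients of $r^{l'}\eta_{l'}$ for $l'>k$ is then handled using the recursion of Lemma~\ref{lem:w-solution}, which is precisely the identity needed to rewrite the row-$l$ component of $(I-Q)_{1:k,1:k}^{-1}Q_{1:k,l'}$ in the combined form $w_{lk}w_{kl'}+w_{ll'}$. The case of $\tilde\theta$ is handled identically, with $r^k\eta_k$ replaced by $r^{k+1/2}\eta_k$ throughout.

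The hard part will be this final bookkeeping step, because \eqref{eq:theta-def-first}--\eqref{eq:theta-def-last} combines $w$-entries drawn from different columns of $w$, whereas block inversion at index $k$ directly supplies only the $k$-th column. Tracking the other columns requires invoking the full recursion of Lemma~\ref{lem:w-solution} and some careful unwinding of Schur-complement cancellations; everything else --- reformulating via $A^{(r)}$, inverting $A_H^{(r)}$, and the $P$-matrix argument for unique solvability --- is a routine consequence of the assumptions already in place.
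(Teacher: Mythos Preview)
Your proposal is correct and follows essentially the paper's own route in Appendix~A.5: eliminate $\theta_H$ via the high-priority block, reduce to a system in $\theta_L$ governed by $I-Q$, and invert the leading principal block to recover the $w$-coefficients --- you package this as Schur complements and block inversion while the paper carries it out by direct substitution, but the argument is the same. One small caveat: the paper's derivation works with the $r$-independent $A$ and $Q$ throughout (which is how \eqref{eq:theta-def-last} is written), so the $\mu^{(r)}$ in the lemma statement should be read as $\mu$; your use of $A^{(r)}$ would not literally reproduce \eqref{eq:theta-def-last}, and dropping the $r$-dependence there aligns you exactly with the paper.
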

	The proofs of Lemma~\ref{lem:w-solution} and \ref{lem:theta-solution} follow directly from the definitions of $w$ and $\theta$, and are therefore omitted.
	
	Hence, starting from the asymptotic BAR~\eqref{eq:theta-bar-sim}, we make use of the above-mentioned key properties of $\theta$ and obtain the relationship among moment generating functions in the following lemma, which plays a crucial role in constructing the inductive argument presented later in Proposition~\ref{prop:independent-prop}. The complete proof can be found in Appendix~\ref{sec:indep-lem-proof}.
	
	\begin{lemma}
		\label{lem:independence-lem}
		Fix $\eta\in\R_-^L$ and $k\in\call$. Let $\theta$ and be defined in \eqref{eq:theta-def-first}--\eqref{eq:theta-def-last}, and we have
		\begin{align}
			&\psi^{(r)}(\theta)-\frac{1}{1-d_k\eta_k}\psi_k^{(r)}(\theta)=o(1).\label{eq:mgf-eq1}
		\end{align}
	\end{lemma}

	Therefore, the following proposition, which is the last step in completing the proof for Theorem~\ref{thm:main}, naturally follows from the lemma above and the SSC results in Lemma~\ref{lem:phi-ssc}. 
	\begin{proposition}
		\label{prop:independent-prop}
		Fix $\eta\in\R_-^L$. For each $k\in\call$, as $r\downarrow0$, we have
		\begin{equation}
			\begin{aligned}
				&\phi^{(r)}(0,\ldots, 0, r^k\eta_k, \ldots, r^J\eta_J, 0_H)\\
				&\qquad-\frac{1}{1-d_k\eta_k}\phi^{(r)}(0,\ldots, 0, r^{k+1}\eta_{k+1},\ldots, r^J\eta_J, 0_H)=o(1).
			\end{aligned}
			\label{eq:independent-prop}
		\end{equation}
	\end{proposition}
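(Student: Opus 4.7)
The statement to prove is a direct corollary of Lemma~\ref{lem:independence-lem} combined with the state space collapse results in Lemma~\ref{lem:phi-ssc}. My plan is to chain together the two equations \eqref{eq:mgf-eq1} and \eqref{eq:mgf-eq2} of Lemma~\ref{lem:independence-lem}, using the SSC results to translate between the $\psi^{(r)}$-type MGFs (which carry the extra remaining-time factors and truncations) and the $\phi^{(r)}$-type MGFs appearing in the statement.

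First, I would start from \eqref{eq:mgf-eq1}, namely $\psi^{(r)}(\theta)-\frac{1}{1-d_k\eta_k}\psi_k^{(r)}(\theta)=o(1)$, where $\theta\equiv\theta(k,r,\eta)$ is defined in \eqref{eq:theta-def-first}--\eqref{eq:theta-def-last}. By the SSC result \eqref{eq:ssc-1}, $\psi^{(r)}(\theta)$ coincides up to $o(1)$ with $\phi^{(r)}(0_{1:k-1},r^k\eta_k,r^{k+1}\eta_{k+1},\ldots,r^L\eta_L,0_H)$. Similarly, by \eqref{eq:ssc-2}, $\psi_k^{(r)}(\theta)$ coincides up to $o(1)$ with $\phi_k^{(r)}(0_{1:k}, r^{k+1}\eta_{k+1},\ldots,r^L\eta_L,0_H)$. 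Substituting, I obtain
\begin{equation*}
    \phi^{(r)}(0_{1:k-1},r^k\eta_k,\ldots,r^L\eta_L,0_H)-\frac{1}{1-d_k\eta_k}\phi_k^{(r)}(0_{1:k}, r^{k+1}\eta_{k+1},\ldots,r^L\eta_L,0_H)=o(1).
\end{equation*}

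The remaining step is to replace the conditional MGF $\phi_k^{(r)}$ in the expression above by the unconditional $\phi^{(r)}$ evaluated at the same argument. For this, I would invoke \eqref{eq:mgf-eq2} of Lemma~\ref{lem:independence-lem}, namely $\psi^{(r)}(\tilde\theta)-\psi_k^{(r)}(\tilde\theta)=o(1)$, with $\tilde\theta\equiv\tilde\theta(k,r,\eta)$. Applying the SSC results \eqref{eq:ssc-3} and \eqref{eq:ssc-4}, which identify both $\psi^{(r)}(\tilde\theta)$ and $\psi_k^{(r)}(\tilde\theta)$ with $\phi^{(r)}(0_{1:k},r^{k+1}\eta_{k+1},\ldots,r^L\eta_L,0_H)$ and $\phi_k^{(r)}(0_{1:k},r^{k+1}\eta_{k+1},\ldots,r^L\eta_L,0_H)$ respectively (up to $o(1)$), I conclude
\begin{equation*}
    \phi^{(r)}(0_{1:k},r^{k+1}\eta_{k+1},\ldots,r^L\eta_L,0_H)-\phi_k^{(r)}(0_{1:k}, r^{k+1}\eta_{k+1},\ldots,r^L\eta_L,0_H)=o(1).
\end{equation*}

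Combining the two displayed equations via the triangle inequality (and the boundedness of the prefactor $1/(1-d_k\eta_k)$, which is guaranteed since $\eta_k\leq 0$ and $d_k>0$) yields \eqref{eq:independent-prop} and completes the proof. Since all the real work has already been done in Lemma~\ref{lem:independence-lem} (where the asymptotic BAR \eqref{eq:theta-bar-sim} is exploited via the two carefully-chosen orders of $\theta$ and $\tilde\theta$) and in the SSC lemmas (which use the uniform moment bounds in Assumptions~\ref{assumption:low-moment}--\ref{assumption:high-moment}), there is no significant obstacle at this stage: the proposition is essentially a bookkeeping step that rewrites Lemma~\ref{lem:independence-lem} in terms of the plain MGFs $\phi^{(r)}$ so that it can be iterated over $k\in\call$ in the induction that produces the final product-form limit of Theorem~\ref{thm:main}.
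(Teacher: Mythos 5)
Your proposal is correct and follows essentially the same route as the paper: both invoke Lemma~\ref{lem:independence-lem} together with the SSC identities \eqref{eq:ssc-1}--\eqref{eq:ssc-4}, with your two intermediate displays corresponding exactly to the two ends of the paper's chain of equalities. The only cosmetic difference is that the paper separates out the case $k=L$ (using $\phi_k^{(r)}(0_{1:L},0_H)=1$) while your argument handles it implicitly since the sums over $l'>k$ are empty there.
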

	\begin{proof}
		$\eta\in\R_-^J$ is fixed throughout this proof. Following Lemma~\ref{lem:independence-lem} and Lemma~\ref{lem:phi-ssc}, we have
		\begin{align*}
			&\psi^{(r)}(\theta)-\frac{1}{1-d_k\eta_k}\psi_k^{(r)}(\theta)\\
			&=\phi^{(r)}(0,\ldots, 0, r^k\eta_k, \ldots, r^J\eta_J, 0_H)-\frac{1}{1-d_k\eta_k}\phi_k^{(r)}(0,\ldots, 0, r^{k+1}\eta_{k+1}, \ldots, r^J\eta_J, 0_H)+o(1)\\
			&=o(1).
		\end{align*}
		Hence, we first have
		\begin{equation}
			\label{eq:proof-step1}
			\phi^{(r)}(0,\ldots, 0, r^k\eta_k, \ldots, r^J\eta_J, 0_H)-\frac{1}{1-d_k\eta_k}\phi_k^{(r)}(0,\ldots, 0, r^{k+1}\eta_{k+1}, \ldots, r^J\eta_J, 0_H)=o(1).
		\end{equation}
		
		If we further set $\eta_k=0$ in~\eqref{eq:proof-step1}, then we obtain
		\begin{equation*}
			\phi^{(r)}(0,\ldots, 0, r^{k+1}\eta_{k+1}, \ldots, r^J\eta_J, 0_H)-\phi_k^{(r)}(0,\ldots, 0, r^{k+1}\eta_{k+1}, \ldots, r^J\eta_J, 0_H)=o(1).
		\end{equation*}
		
		Combining the pieces above, we have proved this proposition, that
		\begin{equation*}
			\phi^{(r)}(0,\ldots, 0, r^k\eta_k, \ldots, r^J\eta_J, 0_H)-\frac{1}{1-d_k\eta_k}\phi^{(r)}(0,\ldots, 0, r^{k+1}\eta_{k+1}, \ldots, r^J\eta_J, 0_H)=o(1).
		\end{equation*}
		
	\end{proof}
	
	We inductively apply Proposition~\ref{prop:independent-prop}, and as such, we prove Theorem~\ref{thm:main} and conclude that for fixed $\eta\in\R_-^J$, we have
	\begin{equation*}
		\lim_{r\to0}\phi^{(r)}(r\eta_1,\ldots, r^J\eta_J, 0_H)=\prod_{k=1}^J\frac{1}{1-d_k\eta_k}.
	\end{equation*}
	Hence, we conclude that the scaled queue length vector process converges to a product-form limit, with each component in the product form following an exponential distribution, when subject to the multi-scale heavy traffic condition.

	\section{Conclusion}
	\label{sec:conclude}

	In this work, we analyze the asymptotics of multiclass networks under static buffer priority policies in a multi-scale heavy traffic regime. Using the BAR approach, we establish uniform moment bounds for scaled low-priority queue lengths and prove that the stationary distribution of the scaled queue length vector process converges to a product-form limit, where each component follows an exponential distribution. 
	Our numerical experiments collectively demonstrate that these product-form limits provide a solid foundation for performance evaluation. We leave the exploration of additional ways to leverage our theory for more refined performance evaluations to future research.

	\newpage
	\appendix
	
	\section{Postponed Proofs of Main Result}
	\label{sec:main-result-proof}
	
	In this section, we present all delayed proofs for results and lemmas from Sections 3 and 4, leading to the establishment of our main result, Theorem~\ref{thm:main}.

	\subsection{Proof of Lemma~\ref{lem:transform-ssc1}}
	\label{sec:ssc1-proof}
	
	In this section, we prove Lemma~\ref{lem:transform-ssc1}, whose proof closely follows that of Lemma 7.8 of~\cite{BravDaiMiya2023}. For completeness, we provide the full details here.
	
	\begin{proof}
		Given $\eta\in\R_-^K$, we first recall that
		\begin{equation*}
			\phi^{(r)}(r\eta_1,\ldots,r^J\eta_J,r\eta_H)=\E\Big[\exp\Big(\sum_{l=1}^L\eta_lr^lZ_l^{(r)}+\sum_{k\in\calh}\eta_krZ_k^{(r)}\Big)\Big].
		\end{equation*}
		Hence, we have
		\begin{align*}
			&\phi^{(r)}(r\eta_1,\ldots,r^J\eta_J,0) - \phi^{(r)}(r\eta_1,\ldots,r^J\eta_J,r\eta_H)\\
			&=\E\Big[\exp\Big(\sum_{l=1}^L\eta_lr^lZ_l^{(r)}\Big)\Big(1-\exp\Big(\sum_{k\in\calh}\eta_krZ_k^{(r)}\Big)\Big)\Big]\\
			&\leq \E\Big[1-\exp\Big(\sum_{k\in\calh}\eta_krZ_k^{(r)}\Big)\Big]
			\leq\sum_{k\in\calh}|\eta_k|r\E[Z_k^{(r)}].
		\end{align*}
		By the uniform moment bound on the unscaled high-priority queue lengths shown in Assumption~\ref{assumption:high-moment}, we can show the desired SSC, as $r\to0$,
		\begin{equation*}
			\phi^{(r)}(r\eta_1,\ldots,r^J\eta_J,0) - \phi^{(r)}(r\eta_1,\ldots,r^J\eta_J,r\eta_H)\to0.
		\end{equation*}
		
		We prove the following conditional version of SSC, and the remaining conditional SSCs would follow a similar proof strategy.
		\begin{align*}
			&\phi_L^{(r)}(r\eta_1,\ldots,r^J\eta_J,0) - \phi_L^{(r)}(r\eta_1,\ldots,r^J\eta_J,r\eta_H)\\
			&=\E\Big[\exp\Big(\sum_{l=1}^L\eta_lr^lZ_l^{(r)}\Big)\Big(1-\exp\Big(\sum_{k\in\calh}\eta_krZ_k^{(r)}\Big)\Big)\mid Z_{H(L)}^{(r)}=0\Big]\\
			&\leq \E\Big[1-\exp\Big(\sum_{k\in\calh}\eta_krZ_k^{(r)}\Big)\mid Z_{H(L)}^{(r)}=0\Big]\\
			&\leq \frac{1}{\Prob(Z_{H(L)}^{(r)}=0)} \E\Big[\Big(\sum_{k\in\calh}|\eta_k|rZ_k^{(r)}\Big)\mathbbm1\{Z_{H(L)}^{(r)}=0\}\Big].
		\end{align*}
		Next, we use H\"{o}lder's inequality and obtain
		\begin{equation*}
			\E\Big[\Big(\sum_{k\in\calh}|\eta_k|rZ_k^{(r)}\Big)\mathbbm1\{Z_{H(L)}^{(r)}=0\}\Big]\leq \E\Big[\Big(\sum_{k\in\calh}|\eta_k|rZ_k^{(r)}\Big)^p\Big]^{1/p}\E\Big[\mathbbm1\{Z_{H(L)}^{(r)}=0\}\Big]^{1/q}.
		\end{equation*}
		Consider setting $p=L+\epsilon_0$, then following Assumption~\ref{assumption:high-moment}, we are able to conclude that
		\begin{align*}
			&\phi_L^{(r)}(r\eta_1,\ldots,r^J\eta_J,0) - \phi_L^{(r)}(r\eta_1,\ldots,r^J\eta_J,r\eta_H)\\
			&\qquad\leq r^{\epsilon_0/(L+\epsilon_0)}\E\Big[\Big(\sum_{k\in\calh}|\eta_k|Z_k^{(r)}\Big)^{L+\epsilon_0}\Big]^{1/(L+\epsilon_0)}
			\to0,\quad\text{as } r\to0.
		\end{align*}
		As such, we have proven the desired conditional SSC.
	\end{proof}

	\subsection{Proof of Lemma~\ref{lem:phi-ssc}}
	\label{sec:ssc-proof}

	In this section, we establish the SSC results in Lemma~\ref{lem:phi-ssc}. Rather than demonstrating a direct connection between $\phi$ and $\psi$, we introduce an intermediary term, 
	\begin{equation}
		\varphi(\theta)=\E[g_{\theta,k,r}(Z)],
	\end{equation}
	where $g_{\theta,k,r}$ is defined in~\eqref{eq:g-test}, to bridge the two MGFs of interest $\phi$ and $\psi$. We state the following two lemmas, each connecting $\varphi$ to $\phi$ and $\varphi$ to $\psi$, which collectively lead to Lemma~\ref{lem:phi-ssc}.
	
	\begin{lemma}
		\label{lem:phi-ssc-1}
		Fix $\eta\in\R_-^L$ and $k\in\call$. For $\theta\equiv\theta(k,r,\eta)$ following \eqref{eq:theta-rule-1}--\eqref{eq:theta-rule-3}, we have
		\begin{align}
			&\phi^{(r)}(0_{1:k-1},r^k\eta_k, r^{k+1}\eta_{k+1},\ldots, r^J\eta_J,0_H)-\varphi^{(r)}(\theta)=o(1)\label{eq:ssc-1-appendix}\\
			&\phi_k^{(r)}(0_{1:k}, r^{k+1}\eta_{k+1},\ldots, r^J\eta_J,0_H)-\varphi_k^{(r)}(\theta)=o(1)\label{eq:ssc-2-appendix}.
		\end{align}
		For $l\in\calh$, we have
		\begin{equation}
			\phi_{l}^{(r)}(0_{1:k-1},r^k\eta_k, r^{k+1}\eta_{k+1},\ldots, r^J\eta_J,0_H)-\varphi_{l}^{(r)}(\theta)=o(1)\label{eq:ssc-high-1-appendix}.
		\end{equation}
		
	\end{lemma}
	
	\begin{lemma}
		\label{lem:psi-ssc}
		Fix $\eta\in\R_-^L$ and $k\in\call$. For $\theta\equiv\theta(k,r,\eta)$ following \eqref{eq:theta-rule-1}--\eqref{eq:theta-rule-3}, we have
		\begin{equation}
			\varphi^{(r)}(\theta) - \psi^{(r)}(\theta)=o(1),\quad\text{and}\quad
			\varphi_k^{(r)}(\theta) - \psi_k^{(r)}(\theta)=o(1),\quad\forall k\in\calk.
			\label{eq:psi-phi-ssc}
		\end{equation}
		
	\end{lemma}
	
	Lemma~\ref{lem:psi-ssc} is a generalization of (7.28) of~\cite{BravDaiMiya2023}, and hence we omit the proof here. In the remainder of this section, we dedicate our efforts to proving the six SSC phenomena stated in Lemma~\ref{lem:phi-ssc-1}.

	We start with proving the three SSC results for $\theta$.
	\begin{proof}[Proof of \eqref{eq:ssc-1-appendix}]
		For fixed $\eta\in\R_-^L$, $k\in\call$, and $\theta\equiv\theta(k,\eta,r)$ as defined in \eqref{eq:theta-rule-1}--\eqref{eq:theta-rule-3}, we have
		\begin{align}
			&\Big\vert g_{(0_{1:k-1},r^k\eta_k,\ldots, r^L\theta_L,0_H)}(Z^{(r)}) - g_{\theta,k,r}(Z^{(r)})\Big\vert\nonumber\\
			&=\Big\vert\exp\Big(\underbrace{\sum_{l=k}^Lr^l\eta_lZ_l^{(r)}}_{\leq0} \Big)\nonumber\\
			&\qquad\qquad\Big(1 - \exp\Big(\langle\theta_{1:k-1},Z_{1:k-1}^{(r)}\wedge1/r^k\rangle + (\theta_k - r^k\eta_k)(Z_k^{(r)}\wedge1/r^{k+1})
			+ \langle\theta_H, Z_H^{(r)}\wedge 1/r\rangle\Big)\Big)\Big\vert\nonumber\\
			&\leq\Big\vert1 - \exp\Big(\langle\theta_{1:k-1},Z_{1:k-1}^{(r)}\wedge1/r^k\rangle 
			+ (\theta_k - r^k\eta_k)(Z_k^{(r)}\wedge1/r^{k+1})
			+ \langle\theta_H, Z_H^{(r)}\wedge 1/r\rangle\Big)\Big\vert\nonumber\\
			&\leq \Big\vert\langle\theta_{1:k-1},Z_{1:k-1}^{(r)}\wedge1/r^k\rangle 
			+ (\theta_k - r^k\eta_k)(Z_k^{(r)}\wedge1/r^{k+1}) 
			+ \langle\theta_H, Z_H^{(r)}\wedge 1/r\rangle\Big\vert\label{eq:ssc-1-term-1}\\
			&\qquad\qquad\times\exp\Big(\Big\vert\langle\theta_{1:k-1},Z_{1:k-1}^{(r)}\wedge1/r^k\rangle
			+ (\theta_k - r^k\eta_k)(Z_k^{(r)}\wedge1/r^{k+1})
			+ \langle\theta_H, Z_H^{(r)}\wedge 1/r\rangle\Big\vert\Big),\label{eq:ssc-1-term-2}
		\end{align}
		where we make use of the inequality $|e^x-e^y|\leq |x-y|e^{|x-y|}$ to obtain the last inequality.
		
		It is important to establish bounds for \eqref{eq:ssc-1-term-1} and \eqref{eq:ssc-1-term-2}. Starting with \eqref{eq:ssc-1-term-2}, we have
		\begin{align}
			\eqref{eq:ssc-1-term-2}
			&\leq \exp\Big(\sum_{l=1}^{k-1}\frac{|\theta_l|}{r^k} 
			+ \frac{|\theta_k-r^k\eta_k|}{r^{k+1}} + \sum_{l\in\calh}\frac{|\theta_{l}|}{r}\Big)\label{eq:bounded-comp},\\
			&\leq\exp\Big(\sum_{l=1}^{k-1}\Big|\sum_{l'=k}^Lc_{ll'}\eta_{l'}\Big|+\Big|\sum_{l'=k+1}^Lc_{kl'}\eta_{l'}\Big|+\Big|\sum_{l\in\calh}c_{ll'}\eta_{l'}\Big|\Big):=\Gamma<\infty.\nonumber
		\end{align}
		where $c_{ll'}$ is the constant coefficient defined in~\eqref{eq:theta-rule-1}--\eqref{eq:theta-rule-3}. Hence, we observe that $\eqref{eq:ssc-1-term-2}$ can be uniformly upper bounded by some constant independent of $r$.
		For \eqref{eq:ssc-1-term-1}, we note that
		\begin{align*}
			\eqref{eq:ssc-1-term-1}&\leq\sum_{l=1}^{k-1}|\theta_l\mid Z_l^{(r)} + |\theta_k-r^k\eta_k\mid Z_k^{(r)} 
			+ \sum_{l'\in\calh}|\theta_{l'}\mid Z_{l'}^{(r)}\\
			&=\sum_{l=1}^{k-1}\frac{|\theta_l|}{r^l}(r^lZ_l^{(r)}) + \frac{|\theta_k-r^k\eta_k|}{r^k}(r^kZ_k^{(r)}) + \sum_{l'\in\calh}|\theta_{l'}|Z_{l'}^{(r)}.
		\end{align*}
		
		Combining the analyses of both terms, we take expectations on both sides and obtain
		\begin{align*}
			&\Big\vert\phi^{(r)}(0_{1:k-1},r^k\eta_k, r^{k+1}\eta_{k+1},\ldots, r^J\eta_J,0_H)-\varphi^{(r)}(\theta)\Big\vert\\
			&\leq \Gamma\cdot \Big\vert \sum_{l=1}^{k-1}\underbrace{\frac{|\theta_l|}{r^l}}_{=O(r^{k-l})}\E[r^lZ_l^{(r)}] + \underbrace{\frac{|\theta_k-r^k\eta_k|}{r^k}}_{=O(r)}\E[r^kZ_k^{(r)}] + \sum_{l'\in\calh}\underbrace{|\theta_{l'}|}_{=O(r^k)}\E[Z_{l'}^{(r)}]\Big\vert,
		\end{align*}
		where the orders are due to the choice of $\theta$ under \eqref{eq:theta-rule-1}--\eqref{eq:theta-rule-3}. 
		Making use of Assumption~\ref{assumption:low-moment} and ~\ref{assumption:high-moment}, i.e., for some $r_0\in(0,1)$, 
		\begin{equation*}
			\sup_{r\in(0,r_0)}\E\left[r^lZ_l^{(r)}\right]<\infty\quad\forall\,l\in\call
			\quad\text{and}\quad
			\sup_{r\in(0,r_0)}\E\left[Z_{l}^{(r)}\right]<\infty\quad\forall\,l\in\calh,
		\end{equation*}
		we are able to prove \eqref{eq:ssc-1-appendix} that
		\begin{equation*}
			\phi^{(r)}(0_{1:k-1},r^k\eta_k, r^{k+1}\eta_{k+1},\ldots, r^J\eta_J,0_H)-\varphi^{(r)}(\theta)=o(1).
		\end{equation*}
	\end{proof}

	To prove~\eqref{eq:ssc-2-appendix}, we need the following technical lemma, whose proof is delayed to the next subsection Appendix~\ref{sec:cross-condition-proof}.
	\begin{lemma}
		\label{lemma:cross-condition} 
		Under the same condition in Theorem~\ref{assumption:low-moment}, for each $k\in\call$, 
		\begin{equation*}
			\E\Big[\sum_{l\in\call,l<k}r^{l+1}Z_{l}^{(r)}\mid Z_{H(k)}^{(r)}=0\Big]=o(1).
		\end{equation*}
	\end{lemma}

	\begin{proof}[Proof of \eqref{eq:ssc-2-appendix}]
		The conditional SSC can be proven following a similar approach as in the proof of \eqref{eq:ssc-1-appendix} shown above. We start by observing the following inequality,
		\begin{align}
			& \Big\vert \phi_k^{(r)}(0_{1:k-1},r^k\eta_k,\ldots, r^J\eta_J,0_H) - \varphi_k^{(r)}(\theta)\Big\vert\nonumber\\
			&\leq \Big(\sum_{i=1}^{k-1}|\theta_i|\E\left[Z_i^{(r)}\mid Z_{H(k)}^{(r)}=0\right] 
			+ |\theta_k-r^k\eta_k|\E\left[Z_k^{(r)}\mid Z_{H(k)}^{(r)}=0\right] 
			+ \sum_{j\in\calh}|\theta_j|\E\left[Z_j^{(r)}\mid Z_{H(k)}^{(r)}=0\right]\Big)\label{eq:conditional-ssc-analysis}\\
			&\quad\times \underbrace{\exp\Big(\sum_{i=1}^{k-1}\frac{|\theta_i|}{r^k} + \frac{|\theta_k-r^k\eta_k|}{r^{k+1}} + \sum_{j\in\calh}\frac{|\theta_j|}{r}\Big)}_{\leq\Gamma=O(1)\text{ following }\eqref{eq:bounded-comp}}\nonumber.
		\end{align}
		
		Therefore, we focus on understanding the three summation terms in \eqref{eq:conditional-ssc-analysis}. 
		
		Firstly, for $i\in\call$ and $i<k$, we make use of Lemma~\ref{lemma:cross-condition} and have
		\begin{equation*}
			|\theta_i|\E\left[Z_i^{(r)}\mid Z_{H(l)}^{(r)}=0\right]=\underbrace{\frac{|\theta_i|}{r^{i+1}}}_{=O(r^{k-(i+1)})}\underbrace{\E\left[r^{i+1}Z_i^{(r)}\mid Z_{H(l)}^{(r)}=0\right]}_{=o(1)}=o(1).
		\end{equation*}
		
		Next, it is clear that $\E[Z_k^{(r)}\mid Z_{H(k)}^{(r)}=0]=0$, since $k\in H(k)$ and the condition $Z_{H(k)}^{(r)}=0$ naturally implies that $Z_k^{(r)}=0$.
		
		Lastly, for $j\in\calh$, we have
		\begin{align*}
			&|\theta_j|\E\left[Z_j^{(r)}\mid Z_{H(j)}^{(r)}=0\right]=
			\frac{|\theta_j|}{\Prob(Z_{H(k)}^{(r)}=0)}\E\left[Z_j^{(r)}\cdot\mathbbm{1}\{Z_{H(k)}^{(r)}=0\}\right]\\
			&=\frac{|\theta_j|}{\Prob(Z_{H(k)}^{(r)}=0)}\Big(\E\left[Z_j^{(r)}\cdot\mathbbm{1}\{Z_{H(k)}^{(r)}=0, Z_j^{(r)}>r^{-1/2}\}\right]+\E\left[Z_j^{(r)}\cdot\mathbbm{1}\{Z_{H(k)}^{(r)}=0, Z_j^{(r)}\leq r^{-1/2}\}\right]\Big)\\
			&\leq\frac{|\theta_j|}{\Prob(Z_{H(k)}^{(r)}=0)}\Big(\E\left[Z_j^{(r)}\cdot\mathbbm{1}\{ Z_j^{(r)}>r^{-1/2}\}\right]+r^{-1/2}\E\left[\mathbbm{1}\{Z_{H(k)}^{(r)}=0\}\right]\Big)\\
			&\overset{\text{(i)}}{=}\underbrace{\frac{|\theta_j|}{\Prob(Z_{H(k)}^{(r)}=0)}}_{=O(1)}\underbrace{\E\left[Z_j^{(r)}\cdot\mathbbm{1}\{ Z_j^{(r)}>r^{-1/2}\}\right]}_{=o(1)}+\underbrace{r^{-1/2}|\theta_j|}_{=O(r^{1/2})}
			=o(1),
		\end{align*}
		where in (i) we use the order $\theta=O(r^k)$, $\Prob(Z_{H(k)}^{(r)}=0)=r^kb_k$ under the multi-scale heavy traffic assumption in Assumption~\ref{assumption:multi-scale-heavy-traffic}, and the uniform moment bound of high-priority queue lengths under Assumption~\ref{assumption:high-moment} to conclude the desired $o(1)$ order.
		
		As such, we have first demonstrated that
		\begin{equation*}
			\phi_k^{(r)}(0_{1:k-1},r^k\eta_k, r^{k+1}\eta_{k+1},\ldots, r^J\eta_J,0_H)-\varphi_k^{(r)}(\theta)=o(1).
		\end{equation*}
		
		Also, it is easy to see that
		\begin{equation*}
			\phi_k^{(r)}(0_{1:k-1},r^k\eta_k, r^{k+1}\eta_{k+1},\ldots, r^J\eta_J,0_H)=\phi_k^{(r)}(0_{1:k}, r^{k+1}\eta_{k+1},\ldots, r^J\eta_J,0_H),
		\end{equation*}
		for the condition $Z_{H(k)}^{(r)}=0$ implies $Z_k^{(r)}=0$, which nullifies the impact of $\theta_k$ in $\phi_k$.
		
		Hence, we have proven the second SSC result \eqref{eq:ssc-2-appendix}
		\begin{equation*}
			\phi_k^{(r)}(0_{1:k}, r^{k+1}\eta_{k+1},\ldots, r^J\eta_J,0_H)-\varphi^{(r)}(\theta)=o(1).
		\end{equation*}
	\end{proof}
	
	\begin{proof}[Proof of \eqref{eq:ssc-high-1-appendix}]
		It is easier to prove the SSC when conditioning on high-priority classes, as shown in Lemma~\ref{lem:idle-prob} that $\Prob(Z_{H(l)}^{(r)}=0)\to\beta_{l}>0$ for $l\in\calh$. Hence, we have
		\begin{align*}
			&\Big\vert\phi_{l}^{(r)}(0_{1:k-1},r^k\eta_k, \ldots, r^J\eta_J,0_H)-\varphi_{l}^{(r)}(\theta)\Big\vert\\
			&\leq \Big(\sum_{i=1}^{k-1}|\theta_i|\E\left[Z_i^{(r)}\mid Z_{H(l)}^{(r)}=0\right] 
			+ |\theta_k-r^k\eta_k|\E\left[Z_k^{(r)}\mid Z_{H(l)}^{(r)}=0\right] 
			+ \sum_{j\in\calh}|\theta_j|\E\left[Z_j^{(r)}\mid Z_{H(l)}^{(r)}=0\right]\Big)\\
			&\quad\times \underbrace{\exp\Big(\sum_{i=1}^{k-1}\frac{|\theta_i|}{r^k} + \frac{|\theta_k-r^k\eta_k|}{r^{k+1}} + \sum_{j\in\calh}\frac{|\theta_j|}{r}\Big)}_{\leq\Gamma=O(1)\text{ following }\eqref{eq:bounded-comp}}.
		\end{align*}
		We note that when subject to Assumption~\ref{assumption:low-moment}, for $i\in\call$ and $i<k$, 
		\begin{equation*}
			|\theta_i|\E\left[Z_i^{(r)}\mid Z_{H(l)}^{(r)}=0\right]=\frac{|\theta_i|}{\beta_{l}^{(r)}}\E\left[Z_i^{(r)}\cdot\mathbbm{1}\{Z_{H(l)}^{(r)}=0\}\right]
			\leq\underbrace{\frac{|\theta_i|}{r^i}}_{=O(r)}\frac{1}{\beta_{l}^{(r)}}\E\left[r^iZ_i^{(r)}\right]=o(1).
		\end{equation*}
		Similarly, we have
		\begin{align*}
			|\theta_k-r^k\eta_k|\E\left[Z_k^{(r)}\mid Z_{H(l)}^{(r)}=0\right]&=\frac{|\theta_k-r^k\eta_k|}{\beta_{l}^{(r)}}\E\left[Z_k^{(r)}\cdot\mathbbm{1}\{Z_{H(l)}^{(r)}=0\}\right]\\
			&\leq\underbrace{\frac{|\theta_k-r^k\eta_k|}{r^k}}_{=O(r)}\frac{1}{\beta_{l}^{(r)}}\E\left[r^kZ_k^{(r)}\right]=o(1).
		\end{align*}
		Lastly, for $j\in\calh$, under Assumption~\ref{assumption:high-moment},
		\begin{equation*}
			|\theta_j|\E\left[Z_j^{(r)}\mid Z_{H(l)}^{(r)}=0\right]=\frac{|\theta_j|}{\beta_{l}^{(r)}}\E\left[Z_j^{(r)}\cdot\mathbbm{1}\{Z_{H(l)}^{(r)}=0\}\right]\leq \underbrace{\frac{|\theta_j|}{\beta_{l}^{(r)}}}_{=O(r^k)}\E\left[Z_j^{(r)}\right]=o(1).
		\end{equation*}
		
		Thus, SSC holds for $l\in\calh$,
		\begin{equation*}
			\phi_{l}^{(r)}(0_{1:k-1},r^k\eta_k, r^{k+1}\eta_{k+1},\ldots, r^J\eta_J,0_H)-\varphi_{l}^{(r)}(\theta)=o(1).
		\end{equation*}
	\end{proof}

	\subsection{Proof of Lemma~\ref{lemma:cross-condition}}
	\label{sec:cross-condition-proof}
	In this section, we provide the proof to Lemma~\ref{lemma:cross-condition}. This lemma follows from the uniform moment bound condition of scaled low-priority queue lengths in Theorem~\ref{assumption:low-moment}.
	
	\begin{proof}
		We first note that for $k\in\call$ and $l<k$,
		\begin{align*}
			\E\Big[r^{l+1}Z_{l}^{(r)}\mid Z_{H(k)}^{(r)}=0\Big]&=\frac{1}{\Prob(Z_{H(k)}^{(r)}=0)}\E\Big[r^{l+1}Z_{l}^{(r)}\mathbbm1\{Z_{H(k)}^{(r)}=0\}\Big]\\
			&\overset{\text{(i)}}{\leq} \frac{1}{\Prob(Z_{H(k)}^{(r)}=0)}\E\Big[(r^{l+1}Z_{l}^{(r)})^p\Big]^{1/p}\E\Big[\mathbbm1\{Z_{H(k)}^{(r)}=0\}\Big]^{1/q}\\
			&\overset{\text{(ii)}}{=}r^{k/q-k+1}b_k^{1/q-1} \E\Big[(r^{l}Z_{l}^{(r)})^p\Big]^{1/p},
		\end{align*}
		where we make use of the H\"{o}lder's inequality to obtain the inequality in (i) and the multi-scale heavy traffic assumption in Theorem~\ref{assumption:multi-scale-heavy-traffic} to achieve (ii).
		
		Setting $p=k+\epsilon_0$, and hence $q=(k+\epsilon_0)/(k+\epsilon_0-1)$, we have
		\begin{equation*}
			\E\Big[r^{l+1}Z_{l}^{(r)}\mid Z_{H(k)}^{(r)}=0\Big]\leq r^{\epsilon_0/(k+\epsilon_0)}b_k^{-1/(k+\epsilon_0)}\E\Big[(r^lZ_{l}^{(r)})^{k+\epsilon_0}\Big]^{1/(k+\epsilon_0)}.
		\end{equation*}
		Hence, under Theorem~\ref{assumption:low-moment}, it is clear that 
		\begin{equation*}
			\E\Big[r^{l+1}Z_{l}^{(r)}\mid Z_{H(k)}^{(r)}=0\Big]=o(1),
		\end{equation*}
		and this completes the proof.
	\end{proof}

	\subsection{Proof of Lemma~\ref{lem:mgf-high}}
	\label{sec:mgf-high-proof}
	
	In this section, we present the proof of Lemma~\ref{lem:mgf-high}, which extends the proof of Lemma 7.9 of \cite{BravDaiMiya2023}.
	
	\begin{proof}
		Fix $\eta\in\R_-^L$ and index $k$. We first set $\theta_L$ following the rules in \eqref{eq:theta-rule-1}--\eqref{eq:theta-rule-3}. Next, we fix index $h\in\calh$, and consider the following choice of $\theta_H$
		\begin{equation*}
			\theta_H=-A_H^{-\top}A_{LH}^\top\theta_L + A_H^{-\top} e_H^{(h)}r^k,
		\end{equation*}
		where $e_H^{(h)}$ is a $H$-dimensional vector with all entries being $0$ except for the $h$-th element taking value $1$.
		Under this choice of $\theta_H$, we first observe that
		\begin{align*}
			&\mu_{l-}\bar\zeta_{l-}(\theta)-\mu_l\bar\zeta_l(\theta)=0,\quad l\in\calh\backslash\{h\},\\
			&\mu_{h-}\bar\zeta_{h-}(\theta)-\mu_h\bar\zeta_h(\theta)=r^k.
		\end{align*}
		Therefore, $\theta=(\theta_L,\theta_H)$ still satisfies the rules in \eqref{eq:theta-rule-1}--\eqref{eq:theta-rule-3} and is of order $|\theta|=O(r^k)$.
		
		Bringing this choice of $\theta$ back into the BAR~\eqref{eq:bar-taylor}, we obtain
		\begin{align}
			q^\ast(\theta)\psi^{(r)}(\theta)&-\sum_{l\in\call}\beta_l^{(r)}\mu_l^{(r)}\zeta_l^\ast(\theta)\Big(\psi_l^{(r)}(\theta)-\psi^{(r)}(\theta)\Big)\label{eq:mgf-high-proof-1}\\
			&+\beta_h^{(r)}\underbrace{\Big(\mu_{h-}^{(r)}\bar\zeta_{h-}^\ast(\theta) - \mu_h^{(r)}\zeta_h^\ast(\theta)\Big)}_{=r^k}\Big(\psi_h^{(r)}(\theta)-\psi^{(r)}(\theta)\Big)\label{eq:mgf-high-proof-2}\\
			&+\sum_{l\in\calh}\beta_l^{(r)}\Big(\mu_{l-}^{(r)}\tilde\zeta_{l-}(\theta) - \mu_l^{(r)}\tilde\zeta_l(\theta)\Big)\Big(\psi_l^{(r)}(\theta)-\psi^{(r)}(\theta)\Big)= o(|\theta|^2),\label{eq:mgf-high-proof-3}
		\end{align}
		where we observe that terms in \eqref{eq:mgf-high-proof-1} and \eqref{eq:mgf-high-proof-3} are at least of order $O(r^{k+1})$. Therefore, when we divide both sides by $r^k$, and then take $r\to0$, we have
		\begin{equation*}
			\psi_h^{(r)}(\theta)-\psi^{(r)}(\theta)=o(1).
		\end{equation*}
		Together with the SSC results~\eqref{eq:ssc-1} and \eqref{eq:ssc-high-1}, we have completed the proof and shown the desired SSC result, i.e.,
		\begin{equation*}
			\phi_{h}^{(r)}(0_{1:k-1},r^k\eta_k, r^{k+1}\eta_{k+1},\ldots, r^J\eta_J,0_H)-\phi^{(r)}(0_{1:k-1},r^k\eta_k, r^{k+1}\eta_{k+1},\ldots, r^J\eta_J,0_H)=o(1).
		\end{equation*}
	\end{proof}

	\subsection{Proof of Lemma~\ref{lem:independence-lem}}
	\label{sec:indep-lem-proof}
	
	In this section, we prove Lemma~\ref{lem:independence-lem}, which plays a crucial role in constructing the inductive argument presented later in Proposition~\ref{prop:independent-prop}. The essence of the proof for the two limits lies in using the asymptotic BAR in Lemma~\ref{prop:asymptotic-bar} as the foundation and leveraging the special structure of $\theta$ defined in~\eqref{eq:theta-def-first}--\eqref{eq:theta-def-last}.
	
	\begin{proof}[Proof of \eqref{eq:mgf-eq1}]
		We first note that $\theta$ as defined in \eqref{eq:theta-def-first}--\eqref{eq:theta-def-last} is carefully chosen to achieve the following three properties. Firstly, the order of $|\theta|$ is set at $r^k$, where $k$ is the fixed index, and hence we know that $q^\ast(\theta)=O(r^{2k})$ and $\bar\zeta_l(\theta)=O(r^k)$ for $l\in\calk$. Second, more specifically, when $l\in\call$ and $l<k$, we have $\Bar{\zeta}_l(\theta)=0$. Lastly, $\mu_{l-}^{(r)}\Bar{\zeta}_{l}(\theta)-\mu_l^{(r)}\Bar{\zeta}_l(\theta)=0$ for $l\in\calh$.
		
		Recall the asymptotic BAR in~\eqref{eq:bar-taylor}, which is repeat below for easier reference,
		\begin{align}
			&\Tilde{q}(\theta)\psi^{(r)}(\theta)-\sum_{l\in\call}\beta_l^{(r)}\mu_l^{(r)}\Bar{\zeta}_l(\theta)\Big(\psi_l^{(r)}(\theta)-\psi^{(r)}(\theta)\Big)\label{eq:bar-general-1}\\
			&
			+\sum_{l\in\calh}\beta_l^{(r)}\Big(\mu_{l-}^{(r)}\bar{\zeta}_{l-}(\theta) - \mu_l^{(r)}\Bar{\zeta}_l(\theta)\Big)\Big(\psi_l^{(r)}(\theta)-\psi^{(r)}(\theta)\Big) \label{eq:bar-general-2}\\
			&= \sum_{l\in\call}\beta_l^{(r)}\mu_l^{(r)}\tilde{\zeta}_l(\theta)\Big(\psi_l^{(r)}(\theta)-\psi^{(r)}(\theta)\Big) \\
			&- \sum_{l\in\calh}\beta_l^{(r)}\Big(\mu_{l-}^{(r)}\tilde{\zeta}_{l-}(\theta) - \mu_l^{(r)}\tilde{\zeta}_l(\theta)\Big)\Big(\psi_l^{(r)}(\theta)-\psi^{(r)}(\theta)\Big) + o(|\theta|^2).\label{eq:bar-general-3}
		\end{align}
		We next analyze the above asymptotic BAR term by term under the special choice of $\theta$.
		
		We start with the second term in~\eqref{eq:bar-general-1}.
		\begin{align*}
			&\sum_{l\in\call}\beta_l^{(r)}\mu_l^{(r)}\Bar{\zeta}_l(\theta)\Big(\psi_l^{(r)}(\theta)-\psi^{(r)}(\theta)\Big)
			\overset{\text{(i)}}{=}\sum_{l=k}^L\beta_l^{(r)}\mu_l^{(r)}\Bar{\zeta}_l(\theta)\Big(\psi_l^{(r)}(\theta)-\psi^{(r)}(\theta)\Big)\\
			&\overset{\text{(ii)}}{=}r^kb_k\mu_l^{(r)}\Bar{\zeta}_l(\theta)\Big(\psi_l^{(r)}(\theta)-\psi^{(r)}(\theta)\Big)
			+\underbrace{\sum_{l=k+1}^Lr^lb_l\mu_l^{(r)}\Bar{\zeta}_l(\theta)\Big(\psi_l^{(r)}(\theta)-\psi^{(r)}(\theta)\Big)}_{=o(r^{2k})}\\
			&\overset{\text{(iii)}}{=}r^kb_k\mu_l^{(r)}\Bar{\zeta}_l(\theta)\Big(\psi_l^{(r)}(\theta)-\psi^{(r)}(\theta)\Big) + o(r^{2k}),
		\end{align*}
		where in (i) we use the special property of $\theta$ that $\bar\zeta_l(\theta)=0$ when $l\in\call$ and $l<k$. We next make use of Lemma~\ref{lem:idle-prob} to obtain (ii). Consolidate the orders, we lastly obtain (iii).
		
		For the term in~\eqref{eq:bar-general-2}, we use the third property that $\theta_H=-A_H^{-\top}A_{LH}^\top\theta_L$, which is equivalent to
		\begin{equation*}
			\mu_{l-}^{(r)}\bar{\zeta}_{l-}(\theta) - \mu_l^{(r)}\Bar{\zeta}_l(\theta)=0,\quad\forall l\in\calh.
		\end{equation*}
		Hence, we know that the summation term in \eqref{eq:bar-general-2} equals zero, i.e.,
		\begin{equation*}
			\sum_{l\in\calh}\beta_l^{(r)}\Big(\mu_{l-}^{(r)}\bar{\zeta}_{l-}(\theta) - \mu_l^{(r)}\Bar{\zeta}_l(\theta)\Big)\Big(\psi_l^{(r)}(\theta)-\psi^{(r)}(\theta)\Big)=0.
		\end{equation*}
		
		Lastly, since $\Tilde{\zeta
		}_l$ is quadratic in $\theta$, it is easy to see that the remaining terms in~\eqref{eq:bar-general-3} all are of order $o(r^{2k})$,
		\begin{align*}
			&\sum_{l\in\call}\beta_l^{(r)}\mu_l^{(r)}\tilde{\zeta}_l(\theta)\Big(\psi_l^{(r)}(\theta)-\psi^{(r)}(\theta)\Big)=\sum_{l=1}^Lr^lb_l\mu_l^{(r)}\underbrace{\tilde{\zeta}_l(\theta)}_{=O(r^{2k})}\Big(\psi_l^{(r)}(\theta)-\psi^{(r)}(\theta)\Big)=o(r^{2k})\\
			&\sum_{l\in\calh}\beta_l^{(r)}\Big(\mu_{l-}^{(r)}\tilde{\zeta}_{l-}(\theta) - \mu_l^{(r)}\tilde{\zeta}_l(\theta)\Big)\Big(\psi_l^{(r)}(\theta)-\psi^{(r)}(\theta)\Big)= o(r^{2k}).
		\end{align*}
		
		Consolidating the above analyses, we obtain the following BAR,
		\begin{equation*}
			\tilde{q}(\theta)\psi^{(r)}(\theta) - r^kb_k\mu_l^{(r)}\Bar{\zeta}_l(\theta)\Big(\psi_l^{(r)}(\theta)-\psi^{(r)}(\theta)\Big) = o(r^{2k}).
		\end{equation*}
		Therefore, we divide $r^{2k}$ on both sides and take $r\to0$, and obtain
		\begin{equation*}
			\Big(\sigma_k^2/2\Big)\eta_k^2\psi^{(r)}(\theta)+b_k\mu_k\Big(1-w_{kk}\Big)\eta_k\Big(\psi_k^{(r)}(\theta)-\psi^{(r)}(\theta)\Big)=o(1),
		\end{equation*}
		with $\sigma_k^2$ defined in \eqref{eq:sig-def}.
		
		As such, we have shown the desired MGF relationship as given in \eqref{eq:mgf-eq1}
		\begin{equation*}
			\psi^{(r)}(\theta)-\frac{1}{1-d_k\eta_k}\psi_k^{(r)}(\theta)=o(1),
		\end{equation*}
		with $d_k=\frac{\sigma_k^2}{2(1-w_{kk})\mu_kb_k}$.
	\end{proof}
	
	\section{Proof of Uniform Moment Bound in Theorem~\ref{assumption:low-moment}}
	\label{sec:moment-bound-proof}
	In this section, we prove a general version of the uniform moment bound in Theorem~\ref{assumption:low-moment}. 
	
	\begin{assumption}
		\label{assumption:t-moment-general}
		There exist a small $\delta_0>0$ such that
		\begin{equation*}
			\E\left[ (T_{e,k}(1))^{N+1}\right] < \infty,\quad\text{for } k\in\cale\quad \text{and}\quad \E\left[ (T_{s,k}(1))^{N+1}\right] < \infty,\quad\text{for } k\in\calk.
		\end{equation*}
	\end{assumption}
	When setting $N=J+\delta_0$, the assumption above recovers Assumption~\ref{assumption:t-moment}.
	
	\begin{assumption}
		\label{assumption:high-moment-general} 
		For each $k\in\calh$, we assume a similar $N$-th moment uniform integrability for the collection of the steady-state job-count vectors $\{Z_k^{(r)}, r\in(0,1)\}$, namely,
		\begin{equation*}
			\lim_{\kappa\to\infty}\sup_{r\in(0,1)}\E\left[\Big(Z_k^{(r)}\Big)^{N}\cdot\mathbbm{1}\{Z_k^{(r)}>\kappa\}\right]=0.
		\end{equation*}
	\end{assumption}
	
	\begin{theorem}
		Given $N\geq J$, under Assumptions~\ref{assumption:t-moment-general},~\ref{assumption:high-moment-general},~\ref{assumption:ah-invertible} and~\ref{assumption:q-m-matrix},
		for each $k\in\call$, the collection of the appropriately scaled steady-state job-count vectors $\{r^kZ_k^{(r)}, r\in(0,1)\}$ is uniformly integrable up to its $N$-th moment for some $\epsilon_0>0$, namely
		\begin{equation*}
			\lim_{\kappa\to\infty}\sup_{r\in(0,1)}\E\Big[\Big(r^kZ_k^{(r)}\Big)^{N}\cdot\mathbbm{1}\{Z_k^{(r)}>\kappa\}\Big]=0.
		\end{equation*}
	\end{theorem}
	
	To prove the desired uniform integrability, it is equivalent to proving that $\exists r_0\in(0,1)$
	\begin{equation*}
		\sup_{r\in(0,r_0)}\E\Big[\Big(r^kZ_k^{(r)}\Big)^{N}\Big]<\infty,\quad\forall k\in\call.
	\end{equation*}
	
	The proof of moment bound follows an induction technique developed in~\cite{GuanChenDai2023}, for $l\in\call$ and $0\leq n\leq N$. 
	As we need to induct on two parameters $(n, k)$, the process is as follows: for each $n \leq N$, we induct on $k \in \call$, then proceed to $n+1$. By this method, we eventually show that the desired properties hold for all $(n, k)$ pairs.
	The induction hypotheses incorporate moment bounds for the scaled queue length, along with auxiliary results that bound the expectations of cross terms involving the queue length and the residual inter-arrival or service times.
	
	\begin{lemma}
		\label{lem:moment-bound-aux}
		Given $N\geq J$,  then for each pair of $1\leq k\leq J$ and $0\leq n\leq N$, there exists positive and finite constants $C_{l,n}$, $D_{l,n}$, $E_{l,n}$, and $F_{l,n}$ that are independent of $r$ such that the following statements hold for all $r\in(0,r_0)$.
		\begin{align}
			&\E_\pi\Big[\Big(r^kZ_k^{(r)}\Big)^n\Big]\leq C_{k,n}\label{eq:S1}\\
			&\sum_{l\in\cale}\E_{e,l}\Big[\Big(r^kZ_k^{(r)}\Big)^n\Big]+\sum_{l\in\calk}\E_{s,l}\Big[\Big(r^kZ_k^{(r)}\Big)^n\Big]\leq D_{k,n}\label{eq:S2}\\
			&\E_\pi\Big[\Big(r^kZ_k^{(r)}\Big)^n\Lambda_{N-n}\Big(X^{(r)}\Big)\Big]\leq E_{k,n}\label{eq:S3}\\
			&\sum_{l\in\cale}\E_{e,l}\Big[\Big(r^kZ_k^{(r)}\Big)^n\Lambda_{N-n}\Big(X^{(r)}\Big)\Big]+\sum_{l\in\calk}\E_{s,l}\Big[\Big(r^kZ_k^{(r)}\Big)^n\Lambda_{N-n}\Big(X^{(r)}\Big)\Big]\leq F_{k,n},\label{eq:S4}
		\end{align}
		where
		\begin{equation*}
			\Lambda_{n}(X^{(r)})=\sum_{l\in\cale}(R_{e,l}^{(r)})^n + \sum_{l\in\calk}(R_{s,l}^{(r)})^n\mathbbm1\{Z_{H_+(l)}^{(r)}=0\}.
		\end{equation*}
	\end{lemma}
	
	To employ the BAR technique and establish the four technical inequalities, we need to judiciously construct appropriate test functions for each case.
	Proving \eqref{eq:S1} involves employing a test function of the form
	\begin{equation}
		f_{k,n}(X^{(r)})=\frac{1}{n+1}r^{k(n-1)}\Big(\dotp{u^{(k)},Z^{(r)}}\Big)^{n+1} +r^{k(n-1)}\Big(\dotp{u^{(k)},Z^{(r)}}\Big)^nh_k\Big(X^{(r)}\Big)\label{eq:S1-test},
	\end{equation}
	where $u^{(k)}$ is as defined in~\eqref{eq:u-vec-def} and
	\begin{equation*}
		h_k(X^{(r)})=-\sum_{l\in\cale}\alpha_lu_l^{(k)}R_{e,l}^{(r)}+\sum_{l\in\calk}\Big(u_l^{(k)}-\sum_{l'\in\calk}P_{ll'}u_{l'}^{(k)}\Big)\mu_lR_{s,l}^{(r)}\mathbbm1\{Z_{H_+(l)}=0\}.
	\end{equation*}
	
	We briefly remark on the choice of $u^{(k)}$ vector. Despite that our goal is to prove the uniform moment bound for scaled queue lengths of low-priority classes, by this choice of $u^{(k)}$ vector, we are also effectively involving queue lengths of high-priority classes in the steady-state BAR evaluation. Nonetheless, the presence of these high-priority queue lengths should not raise any concerns, as it has been carefully controlled by Assumption~\ref{assumption:high-moment-general}. 
	This choice of $u$ vector is therefore unique to MCNs under SBP service policies, which is absent from the proof in~\cite{GuanChenDai2023} for GJNs.

	Proving \eqref{eq:S2}--\eqref{eq:S4} utilizes the following test functions respectively,
	\begin{align}
		&f_{k,n,D}(X^{(r)})=\Big(r^kZ_k^{(r)}\Big)^n\Lambda_1\Big(X^{(r)}\Big)\label{eq:S2-test}\\
		&f_{k,n,E}(X^{(r)})=\Big(r^kZ_k^{(r)}\Big)^n\Lambda_{N-n+1}\Big(X^{(r)}\Big)\label{eq:S3-test}\\
		&f_{k,n,F}(X^{(r)})=\Big(r^kZ_k^{(r)}\Big)^n\Lambda_{N-n}\Big(X^{(r)}\Big)\Lambda_1\Big(X^{(r)}\Big)\label{eq:S4-test}
	\end{align}
	It is important to note that for the BAR technique to be applicable, we require the test functions to be bounded. However, the four test functions shown in \eqref{eq:S1-test}--\eqref{eq:S4-test} do not meet this bounded criteria. Given the complexity of this proof, we temporarily set aside concerns about truncation and proceed to establish the desired bound. The discussion of truncation will follow a standard technique as employed in~\cite{GuanChenDai2023}, and thus is omitted here in this work.
	
	Additionally, we need to extend our induction proof to prove for non-integer valued moment bound condition. We emphasize that the extension technique is standard and has been thoroughly discussed in~\cite{GuanChenDai2023}. Therefore, we omit this extension here, and focus on analyzing the different choices of test functions and applying the induction argument to prove the desired moment bound.

	\subsection{Base Case}
	We begin the induction proof with the base case for $n=0$.
	We notice that in the base case $n=0$, \eqref{eq:S1} and \eqref{eq:S2} hold trivially. Hence, our focus is to prove \eqref{eq:S3} and \eqref{eq:S4} when $n=0$.

	\subsubsection{Proof of~\texorpdfstring{\eqref{eq:S3}}{(S3)}}
	
	In this case, we would like to show that for all $k\in\call$,
	\begin{equation*}
		\E_\pi\Big[\Lambda_N\Big(X^{(r)}\Big)\Big]\leq E_{0,k}<\infty.
	\end{equation*}
	We note that the LHS of the inequality above is independent of $k$. As such, we shall have some $E_0\equiv E_{k,0}$ for all $k\in\call$.

	When $n=0$, the test function is
	\begin{equation*}
		f_{k,0,E}(X^{(r)})=\Lambda_{N+1}\Big(X^{(r)}\Big)
	\end{equation*}
	We substitute the test function into BAR~\eqref{eq:bar-general}, and analyze both sides of the equation.
	For the LHS of BAR~\eqref{eq:bar-general}, we have
	\begin{equation*}
		-\E_\pi[\cala f(X^{(r)})]=(N+1)\E\Big[\sum_{l\in\cale}(R_{e,l}^{(r)})^N + \sum_{l\in\calk}(R_{s,l}^{(r)})^N\mathbbm{1}\{Z_l^{(r)}>0, Z_{H_+(l)}^{(r)}=0\}\Big].
	\end{equation*}
	On the other hand, the RHS of BAR~\eqref{eq:bar-general} consists of the summation of ``Palm'' expectations taken w.r.t.\ external arrivals and service completions. We analyze these ``Palm'' expectations respectively.
	\begin{align*}
		\E_{e,l}[f(X^{(r)}+\Delta_{e,l})-f(X^{(r)})]&=\E[T_{e,l}^{N+1}]/\alpha_l^{N+1}\\
		\E_{s,l}[f(X^{(r)}+\Delta_{s,l})-f(X^{(r)})]&=\E[T_{s,l}^{N+1}]/(\mu_l^{(r)})^{N+1}.
	\end{align*}
	Altogether, for the RHS of BAR~\eqref{eq:bar-general}, we have
	\begin{align*}
		&\E_\pi\Big[\sum_{l\in\cale}(R_{e,l}^{(r)})^N + \sum_{l\in\calk}(R_{s,l}^{(r)})^N\mathbbm{1}\{Z_l^{(r)}>0, Z_{H_+(l)}^{(r)}=0\}\Big]\\
		&\leq \frac{1}{N+1}\Big(
		\sum_{l\in\cale}\E[T_{e,l}^{N+1}]/\alpha_l^N + \sum_{l\in\calk}\lambda_l\E[T_{s,l}^{N+1}]/(\mu_l^{(r)})^{N+1}\Big)\\
		&\leq \frac{1}{N+1}\Big(
		\sum_{l\in\cale}\E[T_{e,l}^{N+1}]/\alpha_l^N + \sum_{l\in\calk}\E[T_{s,l}^{N+1}]/(\mu_l^{(r)})^N\Big),
	\end{align*}
	where we make use of $\lambda_l/\mu_l^{(r)}<1$ in the last inequality.
	
	Next, we note that
	\begin{equation*}
		\E_\pi\Big[(R_{s,l}^{(r)})^N\mathbbm{1}\{Z_{H(l)}^{(r)}=0\}\Big]=\E[T_{s,l}^N\mathbbm1\{Z_{H_l^{(r)}}=0\}]/(\mu_l^{(r)})^N\leq \E[T_{s,l}^N]/(\mu_l^{(r)})^N.
	\end{equation*}
	
	Combining the analyses above, we have proven the desired base case of \eqref{eq:S3},
	\begin{align*}
		&\E_\pi\Big[\Lambda_N(X^{(r)})\Big]\\
		&=\E_\pi\Big[\sum_{l\in\cale}(R_{e,l}^{(r)})^N + \sum_{l\in\calk}(R_{s,l}^{(r)})^N\mathbbm{1}\{Z_l^{(r)}>0, Z_{H_+(l)}^{(r)}=0\}\Big] + \sum_{l\in\calk}\E_\pi\Big[(R_{s,l}^{(r)})^N\mathbbm{1}\{Z_{H(l)}^{(r)}=0\}\Big]\\
		&\leq 
		\sum_{l\in\cale}\frac{\E[T_{e,l}^{N+1}]}{(N+1)\alpha_l^N }+ \sum_{l\in\calk}\frac{\E[T_{s,l}^{N+1}]}{(N+1)(\mu_l^{(r)})^N} + \frac{E[T_{s,l}^N]}{(\mu_l^{(r)})^N}\\
		&\equiv E_0<\infty.
	\end{align*}
	As such, we have completed the proof of the base case of~\eqref{eq:S3} for all $(k,0)$ with $k\in\call$.
	
	\subsubsection{Proof of~\texorpdfstring{\eqref{eq:S4}}{(S4)}}
	
	Next, we prove the base case of~\eqref{eq:S4}. Setting $n=0$, we aim to prove that for all $k\in\call$,
	\begin{equation*}
		\sum_{l\in\cale}\E_{e,l}\Big[\Lambda_{N}\Big(X^{(r)}\Big)\Big]+\sum_{l\in\calk}\E_{s,l}\Big[\Lambda_{N}\Big(X^{(r)}\Big)\Big]\leq F_{k,0}<\infty.
	\end{equation*}
	Similarly, we notice that the LHS of the inequality above is independent of $k$, and therefore, we shall have some $F_0\equiv F_{k,0}$ for all $k\in\call$.
	
	The test function~\eqref{eq:S4-test} with $n=0$ is
	\begin{equation*}
		f_{k,0,F}(X^{(r)})=\Lambda_N\Big(X^{(r)}\Big)\Lambda_1\Big(X^{(r)}\Big).
	\end{equation*}
	We now substitute our test function into BAR and analyze both sides of the equation.
	For the LHS of BAR~\eqref{eq:bar-general}, we have
	\begin{align*}
		-\E_\pi[\cala f(X^{(r)})]&=N\E\Big[\Big(\sum_{l\in\cale}(R_{e,l}^{(r)})^{N-1} + \sum_{l\in\calk}(R_{s,l}^{(r)})^{N-1}\mathbbm{1}\{Z_l^{(r)}>0, Z_{H_+(l)}^{(r)}=0\}\Big)\Lambda_1(X^{(r)})\Big]\\
		&+\E\Big[\Lambda_N(X^{(r)})\Big(E+\sum_{l\in\calk}\mathbbm1\{Z_l^{(r)}>0, Z_{H_+(l)}^{(r)}=0\}\Big)\Big]\\
		&\leq N\E\Big[\Lambda_{N-1}(X^{(r)})\Lambda_1(X^{(r)})\Big] + 2J\E[\Lambda_N(X^{(r)})].
	\end{align*}
	Next, we note that
	\begin{align*}
		&\Lambda_{N-1}(X^{(r)})\Lambda_1(X^{(r)})\\
		&=\Big(\sum_{l\in\cale}(R_{e,l}^{(r)})^{N-1} + \sum_{l\in\calk}(R_{s,l}^{(r)})^{N-1}\mathbbm1\{Z_{H_+(l)}^{(r)}=0\}\Big)\Big(\sum_{l\in\cale}R_{e,l}^{(r)} + \sum_{l\in\calk}R_{s,l}^{(r)}\mathbbm1\{Z_{H_+(l)}^{(r)}=0\}\Big)\\
		&\leq \Big(\sum_{l\in\cale}R_{e,l}^{(r)} + \sum_{l\in\calk}R_{s,l}^{(r)}\mathbbm1\{Z_{H_+(l)}^{(r)}=0\}\Big)^N = \Lambda_1^N(X^{(r)})\\
		&\leq (2J)^{N-1}\Big(\sum_{l\in\cale}(R_{e,l}^{(r)})^N + \sum_{l\in\calk}(R_{s,l}^{(r)})^N\mathbbm1\{Z_{H_+(l)}^{(r)}=0\}\Big)=(2J)^{N-1}\Lambda_N(X^{(r)}).
	\end{align*}
	Hence, we first obtain the following upper bound to the LHS of BAR,
	\begin{equation*}
		-\E_\pi[\cala f(X^{(r)})]\leq (N(2J)^{N-1}+2J)\E_\pi[\Lambda_N(X^{(r)})]\leq (N(2J)^{N-1}+2J)E_0.
	\end{equation*}
	
	Next, we analyze the terms on the RHS of BAR~\eqref{eq:bar-general}. We start by examining the ``Palm'' expectations individually and establish the following lower bounds.
	\begin{align*}
		&\E_{e,l}[f(X^{(r)}+\Delta_{e,l})-f(X^{(r)})]\\
		&=\E_{e,l}\Big[\Big(\Lambda_N(X^{(r)})+(T_{e,l}/\alpha_l)^N\Big)\Big(\Lambda_1(X^{(r)})+T_{e,l}/\alpha_l\Big) - \Lambda_N(X^{(r)})\Lambda_1(X^{(r)})\Big]\\
		&\geq \E_{e,l}\Big[\Lambda_N(X^{(r)})T_{e,l}/\alpha_l\Big]=\E_{e,l}[\Lambda_N(X^{(r)})]/\alpha_l.
	\end{align*}
	Similarly, we have
	\begin{align*}
		&\E_{s,l}[f(X^{(r)}+\Delta_{s,l})-f(X^{(r)})]\\
		&=\E_{s,l}\Big[\Big(\Lambda_N(X^{(r)})+(T_{s,l}/\mu_l^{(r)})^N\Big)\Big(\Lambda_1(X^{(r)})+T_{s,l}/\mu_l^{(r)}\Big) - \Lambda_N(X^{(r)})\Lambda_1(X^{(r)})\Big]\\
		&\geq \E_{s,l}\Big[\Lambda_N(X^{(r)})T_{s,l}/\mu_l^{(r)}\Big]=\E_{s,l}[\Lambda_N(X^{(r)})]/\mu_l^{(r)}.
	\end{align*}
	
	Combining the analyses above, we get
	\begin{align*}
		&\sum_{l\in\cale}\E_{e,l}[\Lambda_N(X^{(r)})]+\sum_{l\in\calk}\frac{\lambda_l}{\mu_l^{(r)}}\E_{s,l}[\Lambda_N(X^{(r)})]\\
		&\leq \sum_{l\in\cale}\alpha_l\E_{e,l}[\Delta f(X_+,X_-)]+\sum_{l\in\calk}\lambda_l\E_{s,l}[\Delta f(X_+,X_-)]=-\E_\pi[\cala f(X)]\\
		&\leq(N(2J)^{N-1}+2J)E_0. 
	\end{align*}
	
	Lastly, we take care of the coefficients on the LHS and conclude that
	\begin{equation*}
		\sum_{l\in\cale}\E_{e,l}[\Lambda_N(X^{(r)})]+\sum_{l\in\calk}\E_{s,l}[\Lambda_N(X^{(r)})]
		\leq\Big(\sup_{r\in(0,r_0)}\sup_{l\in\calk}\frac{\mu_l^{(r)}}{\lambda_l}\Big)(N(2J)^{N-1}+2J)E_0\equiv F_0.
	\end{equation*}
	As such, we have completed the proof of base case of~\eqref{eq:S4} for all $(k,0)$ with $k\in\call$.
	
	\subsection{Induction Step}
	Now that we have established the base case, we proceed to the inductive step. To prove that\eqref{eq:S1}--\eqref{eq:S4} hold for $(k,n)$, we assume that the four technical inequalities all hold for $(k',n')$ with $k'\in\call, n'\leq n$, and for $(k',n)$ for $k'<k$.

	\subsubsection{Proof of~\texorpdfstring{\eqref{eq:S1}}{(S1)}}
	\label{sec:proof-s1-induction}
	We now proceed to prove~\eqref{eq:S1} for general $(k,n)$. 
	In this case, we have the test function
	\begin{equation*}
		f_{k,n}(X^{(r)})=\frac{1}{n+1}r^{k(n-1)}\Big(\dotp{u^{(k)},Z^{(r)}}\Big)^{n+1} +r^{k(n-1)}\Big(\dotp{u^{(k)},Z^{(r)}}\Big)^nh_k\Big(X^{(r)}\Big),
	\end{equation*}
	where $u^{(k)}$ is as defined in~\eqref{eq:u-vec-def} and
	\begin{equation*}
		h_k(X^{(r)})=-\sum_{l\in\cale}\alpha_lu_l^{(k)}R_{e,l}^{(r)}+\sum_{l\in\calk}\Big(u_l^{(k)}-\sum_{l'\in\calk}P_{ll'}u_{l'}^{(k)}\Big)\mu_l^{(r)}R_{s,l}^{(r)}\mathbbm1\{Z_{H_+(l)}=0\}.
	\end{equation*}
	
	Before substituting the test function into the BAR for analysis, we first note the following special properties that result from both our choice of $u^{(k)}$ and the induction hypothesis.
	With $u^{(k)}$ as defined in~\eqref{eq:u-vec-def}, we have
	\begin{equation*}
		\dotp{u^{(k)},Z^{(r)}}=\sum_{l<k}w_{lk}Z_l^{(r)} + Z_k^{(r)} + \dotp{u_H^{(r)},Z_H^{(r)}}.
	\end{equation*}
	By induction hypotheses, we have
	\begin{equation}
		\label{eq:induction-u-property}
		\E_\pi\Big[r^{n(k-1)}\sum_{l<k}\big|w_{lk}Z_l^{(r)}\big|^n\Big]<\infty.
	\end{equation}
	By Assumption~\ref{assumption:high-moment-general}, 
	\begin{equation}
		\label{eq:induction-high-property}\E_\pi\Big[\big|Z_H^{(r)}\big|\Big]<\infty.
	\end{equation}

	Now, we substitute the test function into the BAR~\eqref{eq:bar-general}. For the LHS, we have
	\begin{equation*}
		-\cala f(X^{(r)})=r^{k(n-1)}\Big(\dotp{u^{(k)},Z^{(r)}}\Big)^n\Big(-\cala h_k(X^{(r)})\Big).
	\end{equation*}
	Note that
	\begin{align*}
		-\cala h_k(X^{(r)})&=-\sum_{l\in\cale}\alpha_lu_l^{(k)}+\sum_{l\in\calk}\Big(u_l^{(k)}-\sum_{l'\in\calk}P_{ll'}u_{l'}^{(k)}\Big)\mu_l^{(r)}\mathbbm1\{Z_l^{(r)}>0, Z_{H_+(l)}^{(r)}=0\}\\
		&=-\sum_{l\in\cale}\alpha_lu_l^{(k)}+\sum_{j\in\calj}\Big(u_j^{(k)}-\sum_{l\in\calk}P_{jl}u_l^{(k)}\Big)\mu_j^{(r)}\sum_{l\in\calc(j)}\mathbbm1\{Z_l^{(r)}>0, Z_{H_+(l)}^{(r)}=0\}\\
		&=-\sum_{l\in\cale}\alpha_lu_l^{(k)}+\sum_{j\in\calj}\Big(u_j^{(k)}-\sum_{l\in\calk}P_{jl}u_l^{(k)}\Big)\mu_j^{(r)}(1-\mathbbm1\{Z_{H(j)}^{(r)}=0\}),
	\end{align*}
	where we make use of Lemma~\ref{lem:station-property} to obtain the second equality. We further make use of the traffic equation, that $\alpha_l=\lambda_l-\sum_{l'\in\calk}P_{l'l}\lambda_{l'},$
	and we obtain
	\begin{align*}
		\sum_{l\in\cale}\alpha_lu_l^{(k)}&=\sum_{l\in\calk}u_l^{(k)}\Big(\lambda_l-\sum_{l'\in\calk}P_{l'l}\lambda_{l'}\Big)\\
		&=\sum_{l\in\calk}u_l^{(k)}\lambda_l-\sum_{l'\in\calk}\lambda_{l'}\sum_{l\in\calk}P_{l'l}u_l^{(k)}
		=\sum_{l\in\calk}\lambda_l\Big(u_l^{(k)}-\sum_{l'\in\calk}P_{ll'}u_{l'}^{(k)}\Big)\\
		&=\sum_{l\in\calk}\frac{\lambda_l}{\mu_l^{(r)}}\Big(u_l^{(k)}-\sum_{l'\in\calk}P_{ll'}u_{l'}^{(k)}\Big)\mu_l^{(r)}\\
		&\overset{\text{(i)}}{=}\sum_{j\in\calj}\Big(u_j^{(k)}-\sum_{l\in\calk}P_{jl}u_{l}^{(k)}\Big)\mu_j^{(r)}\sum_{l\in\calc(j)}\frac{\lambda_l}{\mu_l^{(r)}}\\
		&=\sum_{j\in\calj}\Big(u_j^{(k)}-\sum_{l\in\calk}P_{jl}u_{l}^{(k)}\Big)\mu_j^{(r)}\rho_j^{(r)},
	\end{align*}
	where in (i) we use Lemma~\ref{lem:station-property} again to obtain the equality.
	Therefore, the generator on the LHS BAR~\eqref{eq:bar-general} can be simplified to the following
	\begin{align*}
		&-\cala h_k(X^{(r)})=\sum_{j\in\calj}\Big(u_j^{(k)}-\sum_{l\in\calk}P_{jl}u_{l}^{(k)}\Big)\mu_j^{(r)}\Big(-\rho_j^{(r)}+1-\mathbbm1\{z_{H(j)}=0\}\Big)\\
		&=(1-w_{kk})\mu_k^{(r)}\Big(-\rho_k^{(r)}+1-\mathbbm1\{Z_{H(k)}^{(r)}=0\}\Big)-\sum_{j=k+1}^Jw_{jk}\mu_j^{(r)}\Big(-\rho_j^{(r)}+1-\mathbbm1\{Z_{H(j)}^{(r)}=0\}\Big)\\
		&=(1-w_{kk})\mu_k^{(r)}(r^k-\mathbbm1\{Z_{H(k)}^{(r)}=0\})-\sum_{j=k+1}^Jw_{jk}\mu_j^{(r)}\Big(r^j-\mathbbm1\{Z_{H(j)}^{(r)}=0\}\Big).
	\end{align*}
	Subsequently, together with expectation, we have
	\begin{align*}
		&-\E_\pi[\cala f(X^{(r)})]\\
		&=\E_\pi\Big[r^{k(n-1)}\Big(\dotp{u^{(k)},Z^{(r)}}\Big)^n\\
		&\qquad\qquad\quad~~\Big((1-w_{kk})\mu_k^{(r)}(r^k-\mathbbm1\{Z_{H(k)}^{(r)}=0\})-\sum_{j=k+1}^Jw_{jk}\mu_j^{(r)}\Big(r^j-\mathbbm1\{Z_{H(j)}^{(r)}=0\}\Big)\Big)\Big]\\
		&=(1-w_{kk})\mu_k^{(r)}\E_\pi\Big[\Big(\dotp{u^{(k)},r^kZ^{(r)}}\Big)^n\Big]\\
		&- (1-w_{kk})\mu_k^{(r)}\underbrace{\E_\pi\Big[r^{k(n-1)}\Big(\dotp{u^{(k)},Z^{(r)}}\Big)^n\mathbbm1\{Z_{H(k)}^{(r)}=0\}\Big]}_{T_1}\\
		&-\sum_{j=k+1}^Jw_{jk}\mu_j^{(r)}r^{j-k}\E_\pi\Big[\Big(\dotp{u^{(k)},r^kZ^{(r)}}\Big)^n\Big]\\
		&+ \underbrace{\sum_{j=k+1}^Jw_{jk}\mu_j^{(r)}\E_\pi\Big[r^{k(n-1)}\Big(\dotp{u^{(k)},Z^{(r)}}\Big)^n\mathbbm1\{Z_{H(j)}^{(r)}=0\}\Big]}_{T_2}
	\end{align*}
	
	It is important to analyze the two cross terms. Starting with $T_1$, we have
	\begin{equation*}
		\E_\pi\Big[r^{k(n-1)}\Big(\dotp{u^{(k)},Z^{(r)}}\Big)^n\mathbbm1\{Z_{H(k)}^{(r)}=0\}\Big]=r^{n-k}\E_\pi\Big[\Big(r^{k-1}\dotp{u^{(k)},Z^{(r)}}\Big)^n\mathbbm1\{Z_{H(k)}^{(r)}=0\}\Big].
	\end{equation*}
	If $n\geq k$, then it is easy to see that
	\begin{align*}
		&r^{n-k}\E_\pi\Big[\Big(r^{k-1}\dotp{u^{(k)},Z^{(r)}}\Big)^n\mathbbm1\{Z_{H(k)}^{(r)}=0\}\Big]\\
		&=r^{n-k}\E_\pi\Big[\Big(r^{k-1}(\sum_{l<k}u_l^{(k)}Z_l^{(r)}+\dotp{u_H^{(k)},Z_H^{(r)}})\Big)^n\mathbbm1\{Z_{H(k)}^{(r)}=0\}\Big]\\
		&\leq r^{n-k}\E_\pi\Big[\Big(r^{k-1}(\sum_{l<k}u_l^{(k)}Z_l^{(r)}+\dotp{u_H^{(k)},Z_H^{(r)}})\Big)^n\Big]
	\end{align*}
	By induction hypotheses, this term can be controlled and is of a higher order w.r.t.\ $r$
	Otherwise, if $n<k$, we apply the H\"{o}lder's inequality with $p=k/n$ and $q=k/(k-n)$, and obtain
	\begin{align*}
		&r^{n-k}\E_\pi\Big[\Big(r^{k-1}\dotp{u^{(k)},Z^{(r)}}\Big)^n\mathbbm1\{Z_{H(k)}^{(r)}=0\}\Big]\\
		&\leq r^{n-k}\Big(\E_\pi\Big[\Big(r^{k-1}(\sum_{l<k}u_l^{(k)}Z_l^{(r)}+\dotp{u_H^{(k)},Z_H^{(r)}})\Big)^{np}\Big]\Big)^{1/p}\Big(\E[\mathbbm1\{Z_{H(k)}^{(r)}=0\}]\Big)^{1/q}\\
		&=\Big(\E_\pi\Big[\Big(r^{k-1}(\sum_{l<k}u_l^{(k)}Z_l^{(r)}+\dotp{u_H^{(k)},Z_H^{(r)}})\Big)^{k}\Big]\Big)^{1/p},
	\end{align*}
	which is finite as we have previously demonstrated in~\eqref{eq:induction-u-property} and~\eqref{eq:induction-high-property}.

	The other cross-term $T_2$ can be analyzed in a similar fashion, so we omit it here.
	
	Now, we proceed to analyze the RHS of BAR~\eqref{eq:bar-general}. Starting with the ``Palm'' expectations w.r.t.\ external arrivals, we have
	\begin{align}
		&\E_{e,l}[f(X^{(r)}+\Delta_{e,l})-f(X^{(r)})]\nonumber\\
		&=r^{k(n-1)}\E_{e,l}\Big[\frac{1}{n+1}\Big(\dotp{u^{(k)},Z^{(r)}+e^{(l)}}^{n+1}-\dotp{u^{(k)},Z^{(r)}}^{n+1}\Big)-\dotp{u^{(k)},Z^{(r)}+e^{(l)}}^nu_lT_{e,l}\Big]\label{eq:external-t1}\\
		&+r^{k(n-1)}\E_{e,l}\Big[\Big(\dotp{u^{(k)},Z^{(r)}+e^{(l)}}^{n}-\dotp{u^{(k)},Z^{(r)}}^{n}\Big)h_k(X^{(r)})\Big]\label{eq:external-t2}
	\end{align}
	
	Starting with the analysis of~\eqref{eq:external-t1}, we make use of the mean value theorem, that $\exists \omega_{l,1}, \omega_{l,2}\in(0,1)$, such that
	\begin{align*}
		&r^{k(n-1)}\E_{e,l}\Big[\frac{1}{n+1}\Big(\dotp{u^{(k)},Z^{(r)}+e^{(l)}}^{n+1}-\dotp{u^{(k)},Z^{(r)}}^{n+1}\Big)-\dotp{u^{(k)},Z^{(r)}+e^{(l)}}^nu_lT_{e,l}\Big]\\
		&=r^{k(n-1)}\E_{e,l}\Big[u_l\Big(\dotp{u^{(k)},Z^{(r)}+\omega_{l,1}e^{(l)}}^n -\dotp{u^{(k)},Z^{(r)}+e^{(l)}}^n\Big)\Big]\\
		&=-\omega_{l,1}u_l^2n\E_{e,l}\Big[\dotp{u^{(k)},r^k(Z^{(r)}-\omega_{l,2}e^{(l)})}^{n-1}\Big],
	\end{align*}
	where the last term is bounded by induction hypotheses that $\E_{e,l}[(r^kZ_k^{(r)})^{n-1}]<\infty$
	
	For the second term~\eqref{eq:external-t2}, we again apply the mean value theorem, and obtain that $\exists\omega_{l,3}\in(0,1)$, such that
	\begin{align*}
		&r^{k(n-1)}\E_{e,l}\Big[\Big(\dotp{u^{(k)},Z^{(r)}+e^{(l)}}^{n}-\dotp{u^{(k)},Z^{(r)}}^{n}\Big)h_k(X^{(r)})\Big]\\
		&=nu_l\E_{e,l}\Big[\dotp{u^{(k)},r^k(Z^{(r)}+\omega_{l,3}e^{(l)})}^{n-1}h_k(X^{(r)})\Big],
	\end{align*}
	where the last term is bounded by induction hypotheses~\eqref{eq:S4}.
	
	The ``Palm'' expectations w.r.t.\ service completions can be analyzed similarly.
	\begin{align}
		&\E_{s,l}[f(X^{(r)}+\Delta_{s,l})-f(X^{(r)})]\nonumber\\
		&=r^{k(n-1)}\E_{s,l}\Big[\frac{1}{n+1}\Big(\dotp{u^{(k)},Z^{(r)}-e^{(l)}+\xi^{(l)}}^{n+1}-\dotp{u^{(k)},Z^{(r)}}^{n+1}\Big)\label{eq:service-t1a}\\
		&\qquad\qquad\qquad\qquad\qquad-\dotp{u^{(k)},Z^{(r)}-e^{(l)}+\xi^{(l)}}^n\Big(u_l^{(k)}-\sum_{l'\in\calk}P_{ll'}u_{l'}^{(k)}\Big)T_{s,l}\Big]\label{eq:service-t1b}\\
		&+r^{k(n-1)}\E_{s,l}\Big[\Big(\dotp{u^{(k)},Z^{(r)}-e^{(l)}+\xi^{(l)}}^{n}-\dotp{u^{(k)},Z^{(r)}}^{n}\Big)h_k(X^{(r)})\Big].\label{eq:service-t2}
	\end{align}
	
	Before term-by-term analysis, we first note that $-e^{(l)}+\xi^{(l)}$ is independent of $Z^{(r)}$ and
	\begin{equation}
		\Big|\dotp{u^{(k)},-e^{(l)}+\xi^{(l)}}\Big|\leq\|u^{(k)}\|_1, 
		\label{eq:u-delta-norm}
	\end{equation}
	and
	\begin{equation*}
		\E\Big[\dotp{u^{(k)},-e^{(l)}+\xi^{(l)}}\Big]=-u_l^{(k)}+\sum_{l'\in\calk}P_{ll'}u^{(k)}_{l'}.
	\end{equation*}
	
	Now, proceed to analyze the first term in~\eqref{eq:service-t1a}--\eqref{eq:service-t1b}. Making use of the mean value theorem, we obtain that $\exists\omega_{l,4},\omega_{l,5},\omega_{l,6}\in(0,1)$, such that
	\begin{align*}
		&r^{k(n-1)}\E_{s,l}\Big[\frac{1}{n+1}\Big(\dotp{u^{(k)},Z^{(r)}-e^{(l)}+\xi^{(l)}}^{n+1}-\dotp{u^{(k)},Z^{(r)}}^{n+1}\Big)\\
		&\qquad\qquad\qquad\qquad-\dotp{u^{(k)},Z^{(r)}-e^{(l)}+\xi^{(l)}}^n\Big(u_l^{(k)}-\sum_{l'\in\calk}P_{ll'}u_{l'}^{(k)}\Big)T_{s,l}\Big]\\
		&=r^{k(n-1)}\E_{s,l}\Big[\dotp{u^{(k)},-e^{(l)}+\xi^{(l)}}\dotp{u^{(k)},Z^{(r)}+\omega_{l,4}(-e^{(l)}+\xi^{(l)})}^{n}\\
		&\qquad\qquad\qquad\qquad-\dotp{u^{(k)},Z^{(r)}-e^{(l)}+\xi^{(l)}}^n\Big(u_l^{(k)}-\sum_{l'\in\calk}P_{ll'}u_{l'}^{(k)}\Big)\Big]\\
		&=r^{k(n-1)}\E_{s,l}\Big[\dotp{u^{(k)},-e^{(l)}+\xi^{(l)}}\Big(\dotp{u^{(k)},Z^{(r)}+\omega_{l,4}(-e^{(l)}+\xi^{(l)})}^{n}-\dotp{u^{(k)},Z^{(r)}}^n\Big)\Big]\\
		&+r^{k(n-1)}\E_{s,l}\Big[\dotp{u^{(k)},-e^{(l)}+\xi^{(l)}}\dotp{u^{(k)},Z^{(r)}}^n-\dotp{u^{(k)},Z^{(r)}-e^{(l)}+\xi^{(l)}}^n\Big(u_l^{(k)}-\sum_{l'\in\calk}P_{ll'}u_{l'}^{(k)}\Big)\Big]\\
		&=n\omega_{l,4}\E_{s,l}\Big[\dotp{u^{(k)},-e^{(l)}+\xi^{(l)}}^2\dotp{u^{(k)},r^k(Z^{(r)}+\omega_{l,5}(-e^{(l)}+\xi^{(l)}))}^{n-1}\Big]\\
		&+n\Big(u_l^{(k)}-\sum_{l'\in\calk}P_{ll'}u_{l'}^{(k)}\Big)\E_{s,l}\Big[\dotp{u^{(k)},-e^{(l)}+\xi^{(l)}}\dotp{u^{(k)},r^k(Z^{(r)}-\omega_{l,6}(-e^{(l)}+\xi^{(l)}))}^{n-1}\Big].
	\end{align*}
	Thus, making use of~\eqref{eq:u-delta-norm} and induction hypotheses $\E[(r^kZ_k^{(r)})^{n-1}]<\infty$, we can control the final two terms in the equation above.
	
	Consolidating the analyses above, we get
	\begin{align}
		&(1-w_{kk})\mu_k^{(r)}\E_\pi\Big[\dotp{u^{(k)},r^kZ^{(r)}}^n\Big]\label{eq:s1-final-lhs}\\
		&\leq \sum_{j=k+1}^Jw_{jk}\mu_j^{(r)}r^{j-k}\E_\pi\Big[\dotp{u^{(k)},r^kZ^{(r)}}^n\Big]\label{eq:s1-final-rhs-t1}\\
		&+(1-w_{kk})\mu_k^{(r)}\E_\pi\Big[r^{k(n-1)}\dotp{u^{(k)},Z^{(r)}}^n\mathbbm1\{Z_{H(k)}^{(r)}=0\}\Big]\nonumber \\
		&-\sum_{j=k+1}^Jw_{jk}\mu_j^{(r)}\E_\pi\Big[r^{k(n-1)}\dotp{u^{(k)},Z^{(r)}}^n\mathbbm1\{Z_{H(j)}^{(r)}=0\}\Big]\nonumber\\
		&+\sum_{l\in\cale}\E_{e,l}[f(X^{(r)}+\Delta_{e,l})-f(X^{(r)})]+\sum_{l\in\calk}\E_{s,l}[f(X^{(r)}+\Delta_{s,l})-f(X^{(r)})].\nonumber
	\end{align}
	Under Assumption~\ref{assumption:ah-invertible} and~\ref{assumption:q-m-matrix}, we have $1-w_{kk}>0$ in~\eqref{eq:s1-final-lhs}, which ensures the sign of the inequality would not flip. The term~\eqref{eq:s1-final-rhs-t1} is of a higher order of $r$, and thus can be easily controlled when $r$ is taken sufficiently small.
	
	As such, we have proven the desired upper bound,
	\begin{equation*}
		\E_\pi\Big[\Big(r^kZ_k^{(r)}\Big)^n\Big]\leq C_{k,n}.
	\end{equation*}

	\subsubsection{Proof of~\texorpdfstring{\eqref{eq:S2}}{(S2)}}
	\label{sec:proof-s2-induction}
	
	Now we proceed to prove~\eqref{eq:S2}, in which we employ the test function
	\begin{equation*}
		f_{k,n,D}(X^{(r)})=\Big(r^kZ_k^{(r)}\Big)^n\Lambda_1\Big(X^{(r)}\Big).
	\end{equation*}
	
	We substitute the test function into the BAR~\eqref{eq:bar-general} and analyze the two sides of the equation. Starting with the LHS of BAR~\eqref{eq:bar-general}, we have
	\begin{align*}
		-\E_\pi[\cala f(X^{(r)})]&=\E_\pi\Big[\Big(r^kZ_k^{(r)}\Big)^n\Big(-\cala\Lambda_1\Big(X^{(r)}\Big)\Big)\Big]\\
		&=\E_\pi\Big[\Big(r^kZ_k^{(r)}\Big)^n\Big(E+\sum_{l\in\calk}\mathbbm1\{Z_l^{(r)}>0, Z_{H_+(l)}^{(r)}=0\}\Big)\Big]\\
		&\leq 2J\E_\pi\Big[\Big(r^kZ_k^{(r)}\Big)^n\Big]\leq 2J C_{k,n},
	\end{align*}
	where the last inequality holds for what we have just shown above in Appendix~\ref{sec:proof-s1-induction}.
	
	Next, we analyze the ``Palm'' expectations on the RHS of BAR. For the ``Palm'' expectation w.r.t.\ external arrival, we have the following lower bound,
	\begin{align*}
		&\E_{e,l}[f(X^{(r)}+\Delta_{e,l})-f(X^{(r)})]\\
		&=\E_{e,l}\Big[\Big(r^k(Z_k^{(r)}+e_k^{(l)})\Big)^n\Big(\Lambda_1\Big(X^{(r)}\Big) +T_{e,l}/\alpha_l\Big) - \Big(r^kZ_k^{(r)}\Big)^n\Lambda_1\Big(X^{(r)}\Big)\Big]
		\geq\E_{e,l}\Big[\Big(r^kZ_k^{(r)}\Big)^n\Big]/\alpha_l.
	\end{align*}
	
	Similarly, for the ``Palm'' expectation w.r.t.\ service completion, we have
	\begin{align*}
		&\E_{s,l}[f(X^{(r)}+\Delta_{s,l})-f(X^{(r)})]\\
		&=\E_{s,l}\Big[\Big(r^k(Z_k^{(r)}-e_k^{(l)}+\xi^{(l)}_k)\Big)^n\Big(\Lambda_1\Big(X^{(r)}\Big) +T_{s,l}/\mu_l^{(r)}\Big) - \Big(r^kZ_k^{(r)}\Big)^n\Lambda_1\Big(X^{(r)}\Big)\Big]\\
		&=\E_{s,l}\Big[\Big(\Big(r^k(Z_k^{(r)}-e_k^{(l)}+\xi^{(l)}_k)\Big)^n-\Big(r^kZ_k^{(r)}\Big)^n\Big)\Big(\Lambda_1\Big(X^{(r)}\Big) +1/\mu_l^{(r)}\Big) + \Big(r^kZ_k^{(r)}\Big)^n/\mu_l^{(r)}\Big],
	\end{align*}
	where $\xi^{(l)}_k$ denotes the $k$-th coordinate of the departure random variable $\xi^{(l)}$.
	Applying the mean value theorem, we obtain that $\exists\omega_l\in(0,1)$, such that
	\begin{align}
		&\E_{s,l}\Big[\Big(\Big(r^k(Z_k^{(r)}-e_k^{(l)}+\xi^{(l)}_k)\Big)^n-\Big(r^kZ_k^{(r)}\Big)^n\Big)\Big(\Lambda_1\Big(X^{(r)}\Big) +1/\mu_l^{(r)}\Big)\Big]\\
		&=nr^k\E_{s,l}\Big[(-e^{(k)}_k+\xi^{(l)}_k)\Big(r^k(Z_k^{(r)}+\omega_l(-e_k^{(l)}+\xi^{(l)}_k))\Big)^{n-1}\Big(\Lambda_1\Big(X^{(r)}\Big) +1/\mu_l^{(r)}\Big)\Big],
	\end{align}
	which is finite by the induction hypothesis that $\E_{s,l}[(r^kZ_k^{(r)})^{n-1}]<\infty$.
	
	Therefore, from the analyses above, we have that
	\begin{align*}
		&\sum_{l\in\cale}\E_{e,l}\Big[\Big(r^kZ_k^{(r)}\Big)^n\Big]/\alpha_l + \sum_{l\in\calk}\E_{s,l}\Big[\Big(r^kZ_k^{(r)}\Big)^n\Big]/\mu_l^{(r)}\\
		&\leq 2JC_{k,n} -\sum_{l\in\calk}nr^k\E_{e,l}\Big[(-e^{(k)}_k+\xi^{(l)}_k)\Big(r^k(Z_k^{(r)}+\omega_l(-e_k^{(l)}+\xi^{(l)}_k))\Big)^{n-1}\Big(\Lambda_1\Big(X^{(r)}\Big) +1/\mu_l^{(r)}\Big)\Big].
	\end{align*}
	
	Lastly, taking care of the coefficient on the LHS of the inequality above, we are able to conclude the desired upper bound in~\eqref{eq:S2},
	\begin{align*}
		&\sum_{l\in\cale}\E_{e,l}\Big[\Big(r^kZ_k^{(r)}\Big)^n\Big] + \sum_{l\in\calk}\E_{s,l}\Big[\Big(r^kZ_k^{(r)}\Big)^n\Big]\leq \Big(\sup_{l\in\calk}\sup_{r\in(0,r_0)}\mu_l^{(r)}\Big)\\
		&\times\Big(2JC_{k,n}-\sum_{l\in\calk}nr^k\E_{e,l}\Big[(-e^{(k)}_k+\xi^{(l)}_k)\Big(r^k(Z_k^{(r)}+\omega_l(-e_k^{(l)}+\xi^{(l)}_k))\Big)^{n-1}\Big(\Lambda_1\Big(X^{(r)}\Big) +1/\mu_l^{(r)}\Big)\Big]\Big)\\
		&\leq D_{k,n}.
	\end{align*}
	
	\subsubsection{Proof of~\texorpdfstring{\eqref{eq:S3}}{(S3)}}
	\label{sec:proof-s3-induction}
	In this section, we prove~\eqref{eq:S3} for the pair $(k,n)$ of interest with test function
	\begin{equation*}
		f_{k,n,E}(X^{(r)})=\Big(r^kZ_k^{(r)}\Big)^n\Lambda_{N-n+1}\Big(X^{(r)}\Big).
	\end{equation*}
	
	Plugging in the test function into the BAR~\eqref{eq:bar-general}, for the LHS, we have
	\begin{align*}
		&-\E_\pi[\cala f(X^{(r)})]=\E_\pi\Big[\Big(r^kZ_k^{(r)}\Big)^n\Big(-\cala\Lambda_{N-n+1}\Big(X^{(r)}\Big)\Big)\Big]\\
		&=(N-n+1)\E_\pi\Big[\Big(r^kZ_k^{(r)}\Big)^n\Big(\sum_{l\in\cale}(R_{e,l}^{(r)})^{N-n} + \sum_{l\in\calk}(R_{s,l}^{(r)})^{N-n}\mathbbm1\{Z_l^{(r)}>0, Z_{H_+(l)}^{(r)}=0\}\Big)\Big].
	\end{align*}
	
	For the ``Palm'' expectations on the RHS, we have
	\begin{align*}
		&\E_{e,l}[f(X^{(r)}+\Delta_{e,l})-f(X^{(r)})]\\
		&=\E_{e,l}\Big[\Big(r^k(Z_k^{(r)}+e_k^{(l)})\Big)^n\Big(\Lambda_{N-n+1}\Big(X^{(r)}\Big) +(T_{e,l}/\alpha_l)^{N-n+1}\Big) - \Big(r^kZ_k^{(r)}\Big)^n\Lambda_{N-n+1}\Big(X^{(r)}\Big)\Big]\\
		&=\E_{e,l}\Big[\Big(\Big(r^k(Z_k^{(r)}+e_k^{(l)})\Big)^n-\Big(r^kZ_k^{(r)}\Big)^n\Big)\Big(\Lambda_{N-n+1}\Big(X^{(r)}\Big) +(T_{e,l}/\alpha_l)^{N-n+1}\Big) \\
		&\qquad\qquad+ \Big(r^kZ_k^{(r)}\Big)^n(T_{e,l}/\alpha_l)^{N-n+1}\Big]
	\end{align*}
	By the mean value theorem, we know that $\exists\omega_l\in(0,1)$, such that
	\begin{align*}
		&\E_{e,l}\Big[\Big(\Big(r^k(Z_k^{(r)}+e_k^{(l)})\Big)^n-\Big(r^kZ_k^{(r)}\Big)^n\Big)\Big(\Lambda_{N-n+1}\Big(X^{(r)}\Big) +(T_{e,l}/\alpha_l)^{N-n+1}\Big)\Big]\\
		&=r^{k}\E_{e,l}\Big[(e_k^{(l)})^n\Big(r^k(Z_k^{(r)}+\omega_le_k^{(l)})\Big)^{n-1}\Big(\Lambda_{N-n+1}\Big(X^{(r)}\Big) +\frac{\E[T_{e,l}^{N-n+1}]}{\alpha_l^{N-n+1}}\Big)\Big].
	\end{align*}
	Applying the induction hypotheses, that $\E[(r^kZ_k^{(r)})^{n-1}\Lambda_{N-n+1}(X^{(r)})]\leq E_{k,n-1}$, we can bound the above term.
	
	Furthermore, by independence, we have
	\begin{equation*}
		\E_{e,l}\Big[\Big(r^kZ_k^{(r)}\Big)^n(T_{e,l}/\alpha_l)^{N-n+1}\Big]=\E_{e,l}\Big[\Big(r^kZ_k^{(r)}\Big)^n\Big]\frac{\E[T_{e,l}^{N-n+1}]}{\alpha_l^{N-n+1}}\leq D_{k,n}\frac{\E[T_{e,l}^{N-n+1}]}{\alpha_l^{N-n+1}},
	\end{equation*}
	where the last inequality holds for we have just proven in Appendix~\ref{sec:proof-s2-induction}.
	
	Next, we examine the ``Palm'' expectation w.r.t.\ service completion.
	\begin{align*}
		&\E_{s,l}[f(X^{(r)}+\Delta_{s,l})-f(X^{(r)})]\\
		&=\E_{s,l}\Big[\Big(r^k(Z_k^{(r)}-e_k^{(l)}+\xi^{(l)}_k)\Big)^n\Big(\Lambda_{N-n+1}\Big(X^{(r)}\Big) +(T_{s,l}/\mu_l^{(r)})^{N-n+1}\Big) \\
		&\qquad\qquad- \Big(r^kZ_k^{(r)}\Big)^n\Lambda_{N-n+1}\Big(X^{(r)}\Big)\Big]\\
		&=\E_{s,l}\Big[\Big(\Big(r^k(Z_k^{(r)}-e_k^{(l)}+\xi^{(l)}_k)\Big)^n-\Big(r^kZ_k^{(r)}\Big)^n\Big)\Big(\Lambda_{N-n+1}\Big(X^{(r)}\Big) +(T_{s,l}/\mu_l^{(\alpha)})^{N-n+1}\Big) \\
		&\qquad\qquad+ \Big(r^kZ_k^{(r)}\Big)^n(T_{s,l}/\mu_l^{(\alpha)})^{N-n+1}\Big].
	\end{align*}
	Applying the mean value theorem and induction hypotheses, we can similarly bound the above term.
	
	Consolidating the analyses of both sides of BAR, we get
	\begin{align*}
		&\E_\pi\Big[\Big(r^kZ_k^{(r)}\Big)^n\Big(\sum_{l\in\cale}(R_{e,l}^{(r)})^{N-n} + \sum_{l\in\calk}(R_{s,l}^{(r)})^{N-n}\mathbbm1\{Z_l^{(r)}>0, Z_{H_+(l)}^{(r)}=0\}\Big)\Big]\\
		&\leq \frac{1}{N-n+1}\Big(\sum_{l\in\cale}\alpha_l\E_{e,l}[f(X^{(r)}+\Delta_{e,l})-f(X^{(r)})]+\sum_{l\in\calk}\lambda_l\E_{s,l}[f(X^{(r)}+\Delta_{s,l})-f(X^{(r)})]\Big).
	\end{align*}
	
	Note that
	\begin{align}
		&\E_\pi\Big[\Big(r^kZ_k^{(r)}\Big)^n\Lambda_{N-n}(X^{(r)})\Big]\nonumber\\
		&=\E_\pi\Big[\Big(r^kZ_k^{(r)}\Big)^n\Big(\sum_{l\in\cale}(R_{e,l}^{(r)})^{N-n} + \sum_{l\in\calk}(R_{s,l}^{(r)})^{N-n}\mathbbm1\{Z_l^{(r)}>0, Z_{H_+(l)}^{(r)}=0\}\Big)\Big]\nonumber\\
		&+\E_\pi\Big[\Big(r^kZ_k^{(r)}\Big)^n\sum_{l\in\calk}(R_{s,l}^{(r)})^{N-n}\mathbbm1\{Z_{H(l)}^{(r)}=0\}\Big]\label{eq:s3-residual},
	\end{align}
	Therefore, it remains to bound the residual term in~\eqref{eq:s3-residual}.
	Recall that $R_{s,l}=T_{s,l}/\mu_l^{(r)}$ when $Z_{H(l)}^{(r)}=0$, and thus we have
	\begin{align*}
		&\E_\pi\Big[\Big(r^kZ_k^{(r)}\Big)^n\sum_{l\in\calk}(R_{s,l}^{(r)})^{N-n}\mathbbm1\{Z_{H(l)}^{(r)}=0\}\Big]\\
		&=\E_\pi\Big[\Big(r^kZ_k^{(r)}\Big)^n\sum_{l\in\calk}(T_{s,l}/\mu_l^{(r)})^{N-n}\mathbbm1\{Z_{H(l)}^{(r)}=0\}\Big]\\
		&\leq\E_\pi\Big[\Big(r^kZ_k^{(r)}\Big)^n\Big]\sum_{l\in\calk}\frac{\E[T_{s,l}^{N-n}]}{(\mu_l^{(r)})^{N-n}},
	\end{align*}
	where the final inequality can be further bound by the induction hypothesis proved in Section~\ref{sec:proof-s1-induction}

	As such, we have proven the desired upper bound,
	\begin{equation*}
		\E_\pi\Big[\Big(r^kZ_k^{(r)}\Big)^n\Lambda_{N-n}\Big(X^{(r)}\Big)\Big]\leq E_{k,n}.
	\end{equation*}
	
	\subsubsection{Proof of~\texorpdfstring{\eqref{eq:S4}}{(S4)}}
	
	Lastly, we prove~\eqref{eq:S4} in this induction step. As discussed above, we substitute the following test function into the BAR~\eqref{eq:bar-general}.
	\begin{equation*}
		f_{k,n,F}(X^{(r)})=\Big(r^kZ_k^{(r)}\Big)^n\Lambda_{N-n}\Big(X^{(r)}\Big)\Lambda_1\Big(X^{(r)}\Big).
	\end{equation*}
	
	For the LHS of BAR~\eqref{eq:bar-general}, we have
	\begin{align*}
		&-\E_\pi[\cala f(X^{(r)})]\\
		&=\E_\pi\Big[(r^kZ_k^{(r)})^n\Lambda_{N-n}(X^{(r)})\Big(E+\sum_{l\in\calk}\mathbbm1\{Z_l^{(r)}>0,Z_{H_+(l)}^{(r)}=0\}\Big)\Big]\\
		&+(N-n)\E_\pi\Big[(r^kZ_k^{(r)})^n\Big(\sum_{l\in\cale}(R_{e,l}^{(r)})^{N-n-1}+\sum_{l\in\calk}(R_{s,l}^{(r)})^{N-n-1}\mathbbm1\{Z_l^{(r)}>0,Z_{H_+(l)}^{(r)}=0\}\Big)\Lambda_1(X^{(r)})\Big]\\
		&\leq 2J\E_\pi\Big[(r^kZ_k^{(r)})^n\Lambda_{N-n}(X^{(r)})\Big]+(N-n)\E_\pi\Big[(r^kZ_k^{(r)})^n\Lambda_{N-n-1}(X^{(r)})\Lambda_1(X^{(r)})\Big].
	\end{align*}
	Note that
	\begin{equation*}
		\Lambda_{N-n-1}(X^{(r)})\Lambda_1(X^{(r)})\leq\Lambda_1^{N-n}(X^{(r)})\leq (2J)^{N-n-1}\Lambda_{N-n}(X^{(r)}).
	\end{equation*}
	Hence, we have
	\begin{align*}
		&-\E_\pi[\cala f(X^{(r)})]\\
		&\leq 2J\E_\pi\Big[(r^kZ_k^{(r)})^n\Lambda_{N-n}(X^{(r)})\Big]+(N-n)(2J)^{N-n-1}\E_\pi\Big[(r^kZ_k^{(r)})^n\Lambda_{N-n}(X^{(r)})\Big]\\
		&\leq \Big(2J + (N-n)(2J)^{N-n-1}\Big)\E_\pi\Big[(r^kZ_k^{(r)})^n\Lambda_{N-n}(X^{(r)})\Big]\\
		&=\Big(2J + (N-n)(2J)^{N-n-1}\Big)E_{k,n},
	\end{align*}
	where the last inequality holds for the induction hypothesis proved in~\ref{sec:proof-s3-induction}.
	
	It remains for us to analyze the ``Palm'' expectations on the RHS of BAR~\eqref{eq:bar-general}. For the ``Palm'' expectation w.r.t.\ external arrivals, we derive the following lower bound,
	\begin{align*}
		&\E_{e,l}[f(X^{(r)}+\Delta_{e,l})-f(X^{(r)})]\\
		&=\E_{e,l}\Big[\Big(r^k(Z_k^{(r)}+e_k^{(l)})\Big)^n\Big(\Lambda_{N-n}\Big(X^{(r)}\Big) +\Big(\frac{T_{e,l}}{\alpha_l}\Big)^{N-n}\Big)\Big(\Lambda_1\Big(X^{(r)}\Big) +\frac{T_{e,l}}{\alpha_l}\Big) \\
		&\qquad\qquad- \Big(r^kZ_k^{(r)}\Big)^n\Lambda_{N-n}\Big(X^{(r)}\Big)\Lambda_1\Big(X^{(r)}\Big)\Big]\\
		&\geq\E_{e,l}\Big[\Big(\Big(r^k(Z_k^{(r)}+e_k^{(l)})\Big)^n-\Big(r^kZ_k^{(r)}\Big)^n\Big)\Lambda_{N-n}\Big(X^{(r)}\Big)\Big(\Lambda_1\Big(X^{(r)}\Big) +\frac{1}{\alpha_l}\Big)\\
		&\qquad\qquad+ \Big(r^kZ_k^{(r)}\Big)^n\Lambda_{N-n}\Big(X^{(r)}\Big)/{\alpha_l}\Big]\\
		&\geq \E_{e,l}\Big[\Big(r^kZ_k^{(r)}\Big)^n\Lambda_{N-n}\Big(X^{(r)}\Big)\Big]/{\alpha_l}.
	\end{align*}
	
	For the ``Palm'' expectation w.r.t.\ service completions, we have
	\begin{align*}
		&\E_{s,l}[f(X^{(r)}+\Delta_{s,l})-f(X^{(r)})]\\
		&=\E_{s,l}\Big[\Big(r^k(Z_k^{(r)}-e_k^{(l)}+\xi^{(l)}_k)\Big)^n\Big(\Lambda_{N-n}(X^{(r)}) +\Big(\frac{T_{s,l}}{\mu_l^{(r)}}\Big)^{N-n}\Big)\Big(\Lambda_1(X^{(r)}) +\frac{T_{s,l}}{\mu_l^{(r)}}\Big) \\
		&\qquad\qquad- \Big(r^kZ_k^{(r)}\Big)^n\Lambda_{N-n}(X^{(r)})\Lambda_1(X^{(r)})\Big]\\
		&\geq\E_{s,l}\Big[\Big(\Big(r^k(Z_k^{(r)}-e_k^{(l)}+\xi^{(l)}_k)\Big)^n-\Big(r^kZ_k^{(r)}\Big)^n\Big)\Lambda_{N-n}\Big(X^{(r)}\Big)\Big(\Lambda_1\Big(X^{(r)}\Big) +\frac{1}{\mu_l^{(r)}}\Big) \\
		&\qquad\qquad+ \Big(r^kZ_k^{(r)}\Big)^n\Lambda_{N-n}\Big(X^{(r)}\Big)/{\mu_l^{(r)}}\Big].
	\end{align*}
	We again apply the mean value theorem to bound the first term on the RHS lower bound, and obtain that $\exists\omega_l\in(0,1)$, such that
	\begin{align*}
		&\E_{s,l}\Big[\Big(\Big(r^k(Z_k^{(r)}-e_k^{(l)}+\xi^{(l)}_k)\Big)^n-\Big(r^kZ_k^{(r)}\Big)^n\Big)\Lambda_{N-n}\Big(X^{(r)}\Big)\Big(\Lambda_1\Big(X^{(r)}\Big) +\frac{1}{\mu_l^{(r)}}\Big)\Big]\\
		&=nr^{k}\E_{s,l}\Big[(-e_k^{(l)}+\xi^{(l)}_k)\Big(r^k(Z_k^{(r)}+\omega_l(-e_k^{(l)}+\xi^{(l)}_k))\Big)^{n-1}\Lambda_{N-n}\Big(X^{(r)}\Big)\Big(\Lambda_1\Big(X^{(r)}\Big) +\frac{1}{\mu_l^{(r)}}\Big)\Big].
	\end{align*}
	Recall the property that
	\begin{equation*}
		\Lambda_{N-n}(X^{(r)})\Lambda_1(X^{(r)})\leq\Lambda_1^{N-n+1}(X^{(r)})\leq (2J)^{N-n}\Lambda_{N-n+1}(X^{(r)})
	\end{equation*}
	Together with the induction hypothesis that $\E_{s,l}[(r^kZ_k^{(r)})^{n-1}\Lambda_{N-n+1}(X^{(r)})]\leq F_{k,n-1}$, we are able to control the first term.
	
	For the remaining term $\E_{s,l}[(r^kZ_k)^{n-1}\Lambda_{N-n}(X^{(r)})]$, this should be able to be controlled by the induction hypothesis $F_{k,n-1}$.
	
	Combining, we get
	\begin{align*}
		&\sum_{l\in\cale}\E_{e,l}\Big[\Big(r^kZ_k^{(r)}\Big)^n\Lambda_{N-n}\Big(X^{(r)}\Big)\Big]/{\alpha_l}
		+\sum_{l\in\calk}\E_{s,l}\Big[\Big(r^kZ_k^{(r)}\Big)^n\Lambda_{N-n}\Big(X^{(r)}\Big)\Big]/{\mu_l^{(r)}}\\
		&\leq \Big(2J + (N-n)(2J)^{N-n-1}\Big)E_{k,n} \\
		&+ \sum_{l\in\calk}nr^{k}\E_{s,l}\Big[(-e_k^{(l)}+\xi^{(l)}_k)\Big(r^k(Z_k^{(r)}+\omega_l(-e_k^{(l)}+\xi^{(l)}_k))\Big)^{n-1}\Lambda_{N-n}\Big(X^{(r)}\Big)\Big(\Lambda_1\Big(X^{(r)}\Big) +\frac{1}{\mu_l^{(r)}}\Big)\Big].
	\end{align*}
	
	As argued, this upper bound should be finite, and hence adjusting the coefficients on the LHS, we prove the desired inequality
	\begin{equation*}
		\sum_{l\in\cale}\E_{e,l}\Big[\Big(r^kZ_k^{(r)}\Big)^n\Lambda_{N-n}\Big(X^{(r)}\Big)\Big]+\sum_{l\in\calk}\E_{s,l}\Big[\Big(r^kZ_k^{(r)}\Big)^n\Lambda_{N-n}\Big(X^{(r)}\Big)\Big]\leq F_{k,n}.
	\end{equation*}
	
	\section{Additional Properties of Helper Quantities}
	
	The helper quantities include $u^{(k)}$ and $R$ matrix. In this section, we discuss useful properties of these quantities that may help with further analyses.

	\subsection{Properties of \texorpdfstring{$u^{(k)}$}{u(k)} vectors}
	
	Recall that the $u^{(k)}$ vectors are defined as follows, 
	\begin{equation*}
		u^{(k)}=\begin{bmatrix}
			u_L^{(k)}\\u_H^{(k)}
		\end{bmatrix},\quad\text{where}\quad
		u_L^{(k)}=\begin{bmatrix}
			w_{1:k-1,k}\\1\\0
		\end{bmatrix}\quad\text{and}\quad
		u_H^{(k)}=-A_H^{-\top}A_{LH}^\top u_L^{(k)},
	\end{equation*}
	where $A_H$ and $A_{LH}$ are submatrices of matrix $A$ defined as
	\begin{equation*}
		A=(I-P^\top)\diag(\mu)(I-B).
	\end{equation*}
	Hence, we could alternatively write $u^{(k)}$ as
	\begin{equation*}
		u^{(k)}=\begin{bmatrix}
			I\\-A_H^{-\top}A_{LH}^\top
		\end{bmatrix}u_L^{(k)}.
	\end{equation*}
	
	Next, we present a lemma that describes an important property of the $u^{(k)}$ vectors, which is crucial for proving the uniform moment bound in Appendix~\ref{sec:moment-bound-proof}.
	\begin{lemma}
		\label{lem:station-property}
		Given $k\in\call$, for any $l\in\calk$, we have
		\begin{equation}
			\label{eq:u-mu-relationship}
			(u_l^{(k)}-\sum_{l'\in\calk}P_{ll'}u_{l'}^{(k)})\mu_l = (u_{s(l)}^{(k)}-\sum_{l'\in\calk}P_{s(l)l'}u_{l'}^{(k)})\mu_{s(l)},
		\end{equation}
		where $s(l)$ denotes the station number of class $l$ and also corresponds to the class of the lowest priority at that station by our indexing convention. 
	\end{lemma}
	
	\begin{proof}
		The desired property follows directly from the computation below.
		\begin{align}
			(u^{(k)})^\top(I-P^\top)\diag(\mu)(I-B)&=(u^{(k)})^\top A\nonumber\\
			&=(u_L^{(k)})^\top\begin{bmatrix}
				I & -A_{LH}A_H^{-1}
			\end{bmatrix}\begin{bmatrix}
				A_L & A_{LH}\\A_{HL} & A_H
			\end{bmatrix}\nonumber\\
			&=(u_L^{(k)})^\top\begin{bmatrix}
				A_L-A_{LH}A_H^{-1}A_{HL} & 0
			\end{bmatrix}\nonumber\\
			&=(u_L^{(k)})^\top \begin{bmatrix}
				(I-Q)^\top\diag(\mu_L) & 0
			\end{bmatrix}.
			\label{eq:u-relationship}
		\end{align}
		Recall the construction of $B$ matrix, \eqref{eq:u-relationship} thus implies that
		\begin{equation*}
			(u_l^{(k)}-\sum_{l'\in\calk}P_{ll'}u_{l'}^{(k)})\mu_l = (u_{l-}^{(k)}-\sum_{l'\in\calk}P_{l-l'}u_{l'}^{(k)})\mu_{l-},
		\end{equation*}
		which subsequently gives the desired observation, that for $l\in\calk$,
		\begin{equation}
			(u_l^{(k)}-\sum_{l'\in\calk}P_{ll'}u_{l'}^{(k)} )\mu_l= (u_{s(l)}^{(k)}-\sum_{l'\in\calk}P_{s(l)l'}u_{l'}^{(k)})\mu_{s(l)}.
		\end{equation}
		As such, we have completed the proof.
	\end{proof}
	
	Therefore, re-organizing the key observation in~\eqref{eq:u-mu-relationship}, we have
	\begin{equation}
		\diag(\mu)(I-P)u^{(k)}=C^\top\diag(\mu_L)\begin{bmatrix}
			0_{1:k-1}\\1-w_{kk}\\-{w_{k+1:L,k}}
		\end{bmatrix},
	\end{equation}
	and hence we obtain the following alternative expression of $u^{(k)}$ vector.
	\begin{equation}
		\label{eq:u-alternative formulation}
		u^{(k)}=(I-P)^{-1}MC^\top\diag(\mu_L)\begin{bmatrix}
			0_{1:k-1}\\1-w_{kk}\\-{w_{k+1:L,k}}
		\end{bmatrix}.
	\end{equation}
	
	Additionally, following the construction of $w_{1:k-1,k}$ and definition of $u_L^{(k)}$ vector, we note that
	\begin{align*}
		(I-Q)u_L^{(k)}&=\begin{bmatrix}
			I-Q_{1:k-1,1:k-1} & -Q_{1:k-1,k} & Q_{1:k-1,k+1:L}\\
			-Q_{k,1:k-1} & 1-Q_{k,k} & -Q_{k,k+1:L}\\
			-Q_{k+1:L,1:k-1} & -Q_{k+1:L,k} & I-Q_{k+1:L,k+1:L}
		\end{bmatrix}\begin{bmatrix}
			(I-Q_{1:k-1,1:k-1})^{-1}Q_{1:k-1,k}\\1\\0_{k+1:L}
		\end{bmatrix}\\
		&=\begin{bmatrix}
			0_{1:k-1}\\1-w_{kk}\\-w_{k+1:L,k}
		\end{bmatrix}.
	\end{align*}
	Hence, we obtain another expression of $u^{(k)}$ vector, 
	\begin{equation}
		\label{eq:u-formula}
		u^{(k)}=(I-P)^{-1}MC^\top\diag(\mu_L)(I-Q)u_L^{(k)}.
	\end{equation}
	
	\subsection{Equivalence of Reflection Matrices \texorpdfstring{$R$, $\tilde{R}$ and $\bar{R}$}{R, R-tilde and R-bar}}
	\label{sec:equivalent-reflection}
	
	In this section, we discuss the properties of the matrix $R$, which often is the reflection matrix of the corresponding SRBM limit. In the literature, several alternative definitions of the reflection matrix exist, including $\bar{R}$ as defined in~\cite{DaiYehZhou1997}
	\begin{equation}
		\bar{R}=(I+G)^{-1},\quad
		G=CM(I-P^\top)^{-1}P^\top\begin{bmatrix}
			\diag(\mu_L)\\0
		\end{bmatrix},
		\label{eq:r-bar1997}
	\end{equation}   
	and $\tilde{R}$ as defined in~\cite{BravDaiMiya2023}
	\begin{equation}
		\label{eq:r-tilde-2024}
		\tilde{R}=A_L-A_{LH}A_H^{-1}A_{HL}.
	\end{equation}
	In this section, we will connect these definitions and demonstrate the equivalence of these matrices up to multiples of positive diagonal matrices.
	
	\begin{lemma}
		$R$, $\tilde{R}$ and $\bar{R}$ are equivalent up to multiplies of positive diagonal matrices. Specifically, we have
		\begin{align}
			\tilde{R}&=R\diag(\mu_L),\label{eq:r-to-rtilde}\\
			\bar{R}&=\diag(m_L)\tilde{R}=\diag(m_L)R\diag(\mu_L).\label{eq:r-to-rbar}
		\end{align}
		
	\end{lemma}
	\begin{proof}
		
		The equivalence between $R$ and $\tilde{R}$ in~\eqref{eq:r-to-rtilde} is straightforward and can be obtained directly from the definition of the two matrices. Therefore, we focus on showing the equivalence relationship in~\eqref{eq:r-to-rbar}.
		
		Proving~\eqref{eq:r-to-rbar} is equivalent to showing that $diag(\mu_L)=\Tilde{R} \bar{R}^{-1},$
		which is easier to approach, for we have a closed form expression of $\bar{R}^{-1}$,
		\begin{align*}
			\bar{R}^{-1}&=I+CM(I-P^\top)^{-1}P^\top\begin{bmatrix}
				\diag(\mu_L)\\0
			\end{bmatrix}\\
			&=I+CM\Big((I-P^\top)^{-1}-I\Big)\begin{bmatrix}
				\diag(\mu_L)\\0
			\end{bmatrix}\\
			&=CM(I-P^\top)^{-1}\begin{bmatrix}
				\diag(\mu_L)\\0
			\end{bmatrix}.
		\end{align*}
		
		Making use of~\eqref{eq:u-formula}, we see that
		\begin{align*}
			\bar{R}^{-\top}\diag(\mu_L)(I-Q)u_L^{(k)}&=\begin{bmatrix}
				\diag(\mu_L)&0
			\end{bmatrix}(I-P)^{-1}MC^\top\diag(\mu_L)(I-Q)u_L^{(k)}\\
			&=\begin{bmatrix}
				\diag(\mu_L)&0
			\end{bmatrix}u^{(k)}\\
			&=\diag(\mu_L)u_L^{(k)}.
		\end{align*}
		Alternatively, the above matrix-vector multiplication can be written as
		\begin{equation}
			\label{eq:reflection-product-properties}
			\sum_{j=1}^Ju_j^{(k)}\Big[\bar{R}^{-\top}\diag(\mu_L)(I-Q)\Big]_{\cdot,j}=\sum_{j=1}^Ju_j^{(k)}\mu_je^{(j)},
		\end{equation}
		where $\Big[\bar{R}^{-\top}\diag(\mu_L)(I-Q)\Big]_{\cdot,j}$ denotes the $j$-th column of the matrix $\bar{R}^{-\top}\diag(\mu_L)(I-Q)$, and $e^{(j)}\in\R^J$ denotes the basis vector where all entries are $0$ except for the $j$-th entry being 1.

		Since~\eqref{eq:reflection-product-properties} holds for all $k=1,\ldots,J$, we iteratively substitute $u_L^{(k)}$ as defined in~\eqref{eq:u-vec-def} and examine its properties.

		When $k=1$, we have
		\begin{equation*}
			u_L^{(k)}=e^{(1)}=\begin{bmatrix}
				1 & 0 &\cdots &0
			\end{bmatrix}^\top.
		\end{equation*}
		Therefore, 
		\begin{equation*}
			\Big[\bar{R}^{-\top}\diag(\mu_L)(I-Q)\Big]_{\cdot,1}=\begin{bmatrix}
				\mu_1 & 0 & \cdots & 0
			\end{bmatrix}^\top.
		\end{equation*}
		
		When $k=2$, we have
		\begin{equation*}
			u_L^{(k)}=\begin{bmatrix}
				w_{12} & 1 & 0 &\cdots &0
			\end{bmatrix}^\top.
		\end{equation*}
		Hence,
		\begin{align*}
			&\sum_{j=1}^Ju_j^{(k)}\Big[\bar{R}^{-\top}\diag(\mu_L)(I-Q)\Big]_{\cdot,j}\\
			&=w_{12}\Big[\bar{R}^{-\top}\diag(\mu_L)(I-Q)\Big]_{\cdot,1}+\Big[\bar{R}^{-\top}\diag(\mu_L)(I-Q)\Big]_{\cdot,2}\\
			&=w_{12}\begin{bmatrix}
				\mu_1 & 0 & \cdots & 0
			\end{bmatrix}^\top+\Big[\bar{R}^{-\top}\diag(\mu_L)(I-Q)\Big]_{\cdot,2}\\
			&=\sum_{j=1}^Ju_j^{(k)}e^{(j)}\\
			&=w_{12}\begin{bmatrix}
				\mu_1 & 0 & \cdots & 0
			\end{bmatrix}^\top+\begin{bmatrix}
				0 & \mu_2 & 0 & \cdots & 0
			\end{bmatrix}^\top.
		\end{align*}
		Therefore, we conclude that
		\begin{equation*}
			\Big[\bar{R}^{-\top}\diag(\mu_L)(I-Q)\Big]_{\cdot,2}=\begin{bmatrix}
				0 & \mu_2 & 0 & \cdots & 0
			\end{bmatrix}^\top.
		\end{equation*}
		Recursively, we obtain that
		\begin{equation*}
			\bar{R}^{-\top}\diag(\mu_L)(I-Q)=\diag(\mu_L).
		\end{equation*}
		Substituting~\eqref{eq:r-to-rtilde} into the above equation and taking transpose, we have shown that
		\begin{equation*}
			\diag(\mu_L)=\tilde{R}\bar{R}^{-1}.
		\end{equation*}
	\end{proof}
	
	Since these reflection matrices are equivalent up to multiples of positive diagonal matrices, the properties of $\mathcal{P}$-matrices apply to all of them. In other words, if any of $R$, $\tilde{R}$, or $\bar{R}$ is a $\mathcal{P}$-matrix, then they all are $\mathcal{P}$-matrices. As proven by the authors in~\cite{DaiYehZhou1997}, $\bar{R}$ for re-entrant lines under LBFS or FBFS policies is a $\mathcal{P}$-matrix. Consequently, we can conclude that our definition of the reflection matrix $R$ is also a $\mathcal{P}$-matrix.

	\newpage
	\bibliography{citation}

\end{document}